\numberwithin{equation}{section}
\newtheorem{theorem}{Theorem}[section]
\newtheorem{lemma}[theorem]{Lemma}
\newtheorem{corollary}[theorem]{Corollary}
\newtheorem{question}[theorem]{Question}
\theoremstyle{definition}
\newtheorem{example}[theorem]{Example}
\newtheorem{remark}[theorem]{Remark}
\newtheorem*{acks}{Acknowledgements}
\theoremstyle{remark}
\newenvironment{romenumerate}{\begin{enumerate}
 }{\end{enumerate}}
\newcounter{oldenumi}
{\setcounter{oldenumi}{\value{enumi}}
\begin{romenumerate} \setcounter{enumi}{\value{oldenumi}}}
{\end{romenumerate}}
\newcounter{thmenumerate}
\newenvironment{thmenumerate}
{\setcounter{thmenumerate}{0}%
 \def\item{\par
 \refstepcounter{thmenumerate}\textup{(\roman{thmenumerate})\enspace}}
}
{}
\newcounter{xenumerate}   
\newcommand{\refT}[1]{Theorem~\ref{#1}}
\newcommand{\refC}[1]{Corollary~\ref{#1}}
\newcommand{\refL}[1]{Lemma~\ref{#1}}
\newcommand{\refR}[1]{Remark~\ref{#1}}
\newcommand{\refS}[1]{Section~\ref{#1}}
\newcommand{\refE}[1]{Example~\ref{#1}}
\newcommand{\refQ}[1]{Question~\ref{#1}}
\newcommand{\refand}[2]{\ref{#1} and~\ref{#2}}
\newcommand\nopf{\qed}   
\newcommand{\sumik}{\sum_{i=1}^k}
\newcommand{\sumim}{\sum_{i=1}^m}
\newcommand{\sumir}{\sum_{i=1}^r}
\newcommand{\sumjm}{\sum_{j=1}^m}
\newcommand{\prodim}{\prod_{i=1}^m}
\newcommand{\prodir}{\prod_{i=1}^r}
\newcommand\set[1]{\ensuremath{\{#1\}}}
\newcommand\bigset[1]{\ensuremath{\bigl\{#1\bigr\}}}
\newcommand\Bigset[1]{\ensuremath{\Bigl\{#1\Bigr\}}}
\newcommand\xpar[1]{(#1)}
\newcommand\bigpar[1]{\bigl(#1\bigr)}
\newcommand\Bigpar[1]{\Bigl(#1\Bigr)}
\newcommand\biggpar[1]{\biggl(#1\biggr)}
\newcommand\bigabs[1]{\bigl|#1\bigr|}
\newcommand\biggabs[1]{\biggl|#1\biggr|}
\newcommand\lrabs[1]{\left|#1\right|}
\def\rompar(#1){\textup(#1\textup)}    
\newcommand\xfrac[2]{#1/#2}
\newcommand\parfrac[2]{\Bigpar{\frac{#1}{#2}}}
\newcommand\ceil[1]{\lceil#1\rceil}
\newcommand\floor[1]{\lfloor#1\rfloor}
\newcommand\ntoo{\ensuremath{{n\to\infty}}}
\newcommand\ktoo{\ensuremath{{k\to\infty}}}
\newcommand\xtoo{\ensuremath{{x\to\infty}}}
\newcommand\ie{i.e.\spacefactor=1000}
\newcommand\etc{etc.\spacefactor=1000}
\newcommand\ii{\mathrm{i}}
\newcommand{\tend}{\longrightarrow}
\newcommand\pto{\overset{\mathrm{p}}{\tend}}
\newcommand\eqd{\overset{\mathrm{d}}{=}}
\newcommand\bbR{\mathbb R}
\newcommand\bbC{\mathbb C}
\newcommand\bbZ{\mathbb Z}
\newcommand\bbQ{\mathbb Q}
\newcounter{CC} 
\newcounter{cc}
\newcommand\E{\operatorname{\mathbb E{}}}
\renewcommand\P{\operatorname{\mathbb P{}}}
\newcommand\Ps{\operatorname{\mathcal P{}}}
\newcommand\diam{\operatorname{diam}}
\newcommand\ga{\alpha}
\newcommand\gb{\beta}
\newcommand\gd{\delta}
\newcommand\gam{\gamma}
\newcommand\gl{\lambda}
\newcommand\go{\omega}
\newcommand\eps{\varepsilon}
\newcommand\cF{\mathcal F}
\newcommand\cL{{\mathcal L}}
\newcommand\cP{\mathcal P}
\newcommand\cS{{\mathcal S}}
\newcommand\cT{{\mathcal T}}
\newcommand\tS{{\tilde S}}
\newcommand\hT{{\widehat T}}
\newcommand\limn{\lim_{n\to\infty}}
\newcommand\qw{^{-1}}
\newcommand\qww{^{-2}}
\renewcommand{\=}{:=}
\newcommand\dd{\,\textup{d}}
\newcommand\nn{\tau}
\newcommand\kk{\kappa}
\newcommand\eff{\varepsilon}
\newcommand\bbzn{\bbZ_n}
\newcommand\nngr{\nn_{\textup{Greedy}}}
\newcommand\mm[1]{\par \emph{Method #1}.}
\newcommand\mmx[1]{Method~#1}
\newcommand\q{\bar}
\newcommand\qa{\overline}
\newcommand\qx{\q x}
\newcommand\ff{\widehat}
\newcommand\xix{\wXi^*}
\newcommand\bbznx{\bbzn^*}
\newcommand\cznx{{\chi^*_n}}
\newcommand\ffznx{\widehat{\cznx}}
\newcommand\xxx{^*}
\newcommand\xHaa{M(a_1,\dots,a_m)}
\newcommand\pie{p_i^{e_i}}
\newcommand\dela{\,|\,}
\newcommand\ndela{\nmid}
\newcommand\ex[1]{\exp(#1)}
\newcommand\urladdrx[1]{{\urladdr{\def~{{\tiny$\sim$}}#1}}}
\renewcommand\Pr{{\mathbb P}}
\newcommand{\GS}{G_S}
\newcommand\worst{\sigma}
\newcommand\cd{covering multiplicity}
\newcommand\wXi{W}
\newcommand\weps{\gd}
\begin{document}
\title
{On covering by translates of a set}

\date{October 19, 2009; revised May 25, 2010}

\author{B\'ela Bollob\'as}
\address{Department of Pure Mathematics and Mathematical Statistics,
Wilberforce Road, Cambridge CB3 0WB, UK and
Department of Mathematical Sciences,
University of Memphis, Memphis TN 38152, USA}
\thanks{The first author's research was supported in part by NSF grants CNS-0721983, CCF-0728928
and DMS-0906634, and ARO grant W911NF-06-1-0076.}
\email{b.bollobas@dpmms.cam.ac.uk}

\author{Svante Janson}
\address{Department of Mathematics, Uppsala University, PO Box 480,
SE-751~06 Uppsala, Sweden}
\email{svante.janson@math.uu.se}
\urladdrx{http://www.math.uu.se/~svante/}

\author{Oliver Riordan}
\address{Mathematical Institute, University of Oxford, 24--29 St Giles', Oxford OX1 3LB, UK}
\email{riordan@maths.ox.ac.uk}


\begin{abstract} 
In this paper we study the minimal number $\nn(S,G)$ of translates
of an arbitrary subset $S$ of a group $G$ needed to cover the group,
and related notions of the efficiency of such coverings.
We focus mainly on finite subsets in discrete groups, reviewing the
classical results in this area, and generalizing them to
a much broader context. For example,
the worst-case efficiency when $S$ has $k$ elements
is of order $1/\log k$. We show that if $n(k)$ grows
at a suitable rate with $k$, then almost every $k$-subset
of any given group with order $n$ comes close to this worst-case bound.
In contrast, if $n(k)$ grows very rapidly, or if $k$ is fixed and
$n\to\infty$,
then almost every $k$-subset of the cyclic group with order $n$
comes close to the optimal efficiency.
\end{abstract}

\maketitle

\section{Introduction}\label{S:intro}

Packing and covering problems have been studied for many decades, both
in discrete geometry and in stochastic geometry.
One of the basic questions of discrete geometry is the following:
given sets $S$, $T\subset\bbR^d$, what
is the minimal number of translates of $S$ that cover $T$?
The most studied cases are when $S$ is a ball in $\bbR^d$, or 
a convex polygon in $\bbR^2$. For these and other questions see
the classical treatises of Fejes T\'oth~\cite{FT53,FT72} and Rogers~\cite{Rogers}.
For a selection of classical and more recent results see, for example,
\cite{BambahRogers,BRZ,Boroczky,ERogers,gFT97,Hall1,PachAgarwal}.

In stochastic geometry similar questions are considered. Here the starting point
was the study of the probability that $n$ random arcs, each of length
$a$, cover the entire circle; see Steutel~\cite{Steutel},
Flatto~\cite{Flatto} and Janson~\cite{Janson1,Janson3}.
For general random covering problems see the books by Penrose~\cite{Penrose03},
Meester and Roy~\cite{MeesterRoy} and Hall~\cite{Hall}, and a host of papers including
\cite{AppelRusso,ARS,BBSW,Janson2,Maehara,Penrose99}.

In almost all cases above, the set $S$ is taken to be convex. In this paper we shall address
a rather different kind of covering problem, studying the minimal number of translates of
a given, arbitrary subset $S$ of a general group $G$ needed to cover the group.
Of course, in this generality not too much can be said, so we shall focus on various
natural special cases. Although the general problem includes those concerning balls
mentioned above, the flavour of the cases we study turns out to be closer
to covering problems in stochastic geometry.
Some of the special cases we consider have been studied before:
for example, Newman~\cite{Newman_dens}
studied the density of coverings of $\bbZ$ by a given finite set
(in fact, he studied coverings of the natural numbers, which turns
out to be equivalent),
and Schmidt~\cite{Schmidt} proved results about coverings of $\bbZ^d$ (as well as various other cases).
We shall comment on the relationship of our results
to these earlier results in the relevant sections.

Let $G$ be a group and let $S$ be a  non-empty subset of $G$.
By the \emph{covering number} $\nn(S,G)$ we mean the smallest number of
(left) translates of $S$ that cover $G$,
\ie,
\begin{equation}\label{tsg}
  \begin{split}
  \nn(S,G)
&\=
\min\bigset{m:\bigcup_{i=1}^m t_i S=G 
  \text{ for some $t_1,\dots,t_m\in G$}}
\\&\phantom:
=\min\set{|T|:TS=G}.	
  \end{split}
\end{equation}
If no finite set of translates of $S$ covers $G$, then we set
$\nn(S,G)=\infty$. Our aim is to study the efficiency
of coverings by $S$, loosely defined as the reciprocal
of the average number of times an element is covered
in a covering achieving the minimum $\nn(S,G)$.

We are mainly interested in Abelian groups, for which we
use additive notation, and in particular the
groups $\bbZ_n$, $\bbZ$ and $\bbR$, and their powers. In general, however, $G$
can be any group, and need not be Abelian.

In the following sections we consider some natural special cases, describing
the appropriate notions of \cd\ and covering efficiency, and proving various
results. Our main results focus on the worst possible efficiency of sets
$S$ in some class, for example the class of $k$-element subsets of $\bbZ$. We also
include many examples and trivial results to illustrate the definitions,
and, perhaps most importantly, we pose many open problems.

After giving the definitions for general compact groups in \refS{Scompact},
we first consider finite groups, presenting our basic results in \refS{Sfinite},
and considering random subsets $S$ in \refS{Srandom}.
We turn to subsets of $\bbZ$ in \refS{SZ}. In Sections~\ref{Slc} and~\ref{SZsmall},
we compare coverings of $\bbZ$ with those of cyclic groups,
focusing on small sets $S$. In Sections~\ref{SR}--\ref{SR2} we consider
the case $G=\bbR$. Finally, in Sections~\ref{SZd} and~\ref{SRd} we consider
coverings of $\bbZ^d$ and $\bbR^d$, respectively; these sections
contain (essentially) only questions, rather than results.

\begin{acks}
  This research was begun during a visit by SJ to the University of
  Cambridge, partly funded by Trinity College, Cambridge, and continued during a
  visit by SJ to the Isaac Newton Institute in Cambridge, funded by a
  Microsoft fellowship. We are grateful to two anonymous referees for careful
  reading of the paper, for finding two minor errors, and for helpful suggestions concerning the presentation.
\end{acks}

\section{$G$ compact}\label{Scompact}

If $G$ is a compact group, 
it has a finite Haar
measure, which we denote by $\mu$.
We consider only the case when 
$S$ is measurable and 
$\nn(S,G)<\infty$; note that the latter holds whenever
$S$ has an interior point.

When $G$ is compact and $\nn(S,G)<\infty$,
we define the \emph{\cd} of $S$ as
\begin{equation}
  \label{ksg}
\kk(S,G)\=\frac{\nn(S,G)\,{\mu(S)}}{\mu(G)},
\end{equation}
this is 
the average number of times each point in $G$ is covered by a
smallest (minimum cardinality) covering by translates of $S$.
The \emph{efficiency} of $S$ is
\begin{equation}\label{esg}
  \eff(S,G)\=1/\kk(S,G).
\end{equation}
Note that these definitions do not depend on the chosen normalisation of the Haar
measure $\mu$. 

If $TS=G$, then 
$\mu(G)=\mu\bigpar{\bigcup_{t\in T} tS}\le |T|\mu(S)$, so 
trivially
\begin{align}
&\phantom{<}\nn(S,G)\ge\frac{\mu(G)}{\mu(S)}
\label{a0}
\intertext{and thus}
    1&\le\kk(S,G)<\infty,
\label{a1}\\
0&<\eff(S,G)\le1.
\label{a2}
\end{align}

We also define the \emph{\cd} $\kk(\cT)$ or \emph{efficiency} $\eff(\cT)$  
of a particular covering $\cT=\set{t_iS}$ of $G$:
\begin{equation*}
 \kk(\cT)\=|\cT|\mu(S)/\mu(G) \qquad\hbox{and}\qquad
\eff(\cT)\=1/\kk(\cT).
\end{equation*}
The analogues of \eqref{a1} and \eqref{a2} hold for these too.
Trivially, $\kk(S,G)=\min\{\kk(\cT)\}$, where the minimum
is over coverings $\cT$ by translates of $S$, and $\eff(S,G)=\max\{\eff(\cT)\}$.

We call a subset $S$ of $G$ \emph{efficient}, or \emph{efficiently covering},
if $\kk(S,G)=\eff(S,G)=1$.
Typically, $\kk(S,G)>1$ and thus $\eff(S,G)<1$, in which
case $S$ is \emph{inefficient}.
Of course, there
many efficient sets $S$; here is one simple family of examples.

  \begin{example}\label{Esubgroup}
If $S$ is a subgroup of $G$ of finite index, then $\nn(S,G)=\mu(G)/\mu(S)$ and 
$\kk(S,G)=\eff(S,G)=1$.
  \end{example}

In general, the study of efficient coverings, or, essentially equivalently, 
of \emph{tilings}, has received much more attention than the inefficient
case. One early example is the paper of Haj\'os~\cite{Hajos}; we shall mention
some other examples in specific contexts later.
Here we focus on the inefficient case, and specifically on
the question of how large $\kk(S,G)$ is, rather than simply whether it is equal to $1$
or not.

\begin{remark}
\label{Reffsize}
We may also define the \emph{effective size} of $S$ as
\begin{equation*}
\nu(S,G)\=\frac{\mu(G)}{\nn(S,G)}=\frac{\mu(S)}{\kk(S,G)}
=\eff(S,G)\mu(S),
\end{equation*}
which satisfies
$0<\nu(S,G)\le\mu(S)$. Thus, as far as coverings are concerned,
$S$ is `as good as' an efficiently covering set of measure $\nu(S,G)$.
Analogous definitions with $\nu(S)\=\eff(S)\mu(S)$ can be made for
the other cases studied in later sections,
in particular for $G=\bbZ$ and $G=\bbR$.
Although the quantity $\nu$ also carries intuitive information, we shall
work throughout with $\nn$, $\kk$ and $\eff$.
\end{remark}

\section{$G$ finite}\label{Sfinite}
An important special case is when $G$ is
finite, in which case we use the counting
measure $\mu(A)=|A|$ as the (normalized) Haar measure.
Note that now $\nn(S,G)\le|G|<\infty$ for every non-empty set $S$.
Hence, for any non-empty $S\subseteq G$, 
\begin{align}
\frac{|G|}{|S|}&\le \nn(S,G) \le |G|,
\label{b0}\\
    1&\le\kk(S,G)\le|S|,
\label{b1}\\
|S|\qw&\le\eff(S,G)\le1.
\label{b2}
\end{align}

The lower bounds on $\nn$ and $\kk$, or, equivalently, the upper bound
$\eff\le1$, can be attained,
for example in Examples \refand{Esubgroup}{Eresidue}.
Indeed, when $G$ is finite, a covering $\cT=\set{t_i S}$ is
\emph{efficient} (\ie, $\eff(\cT)=1$) if and only if it is a partition
of $G$ into disjoint translates of $S$.
On the other hand, the lower bound on $\eff$  
(upper bounds on $\nn$ and $\kk$)
can be substantially improved as we shall now see.

One method of finding a covering set of translates is the greedy
algorithm. Pick elements
$t_1,t_2,\dots$ in $G$ one by one as follows:
having chosen $t_1,\dots,t_{j-1}$
with $\bigcup_{i=1}^{j-1} t_iS\neq G$, choose $t_j$ so as to
maximize
$\bigabs{\bigcup_{i=1}^j t_iS}$. (If several choices of $t_j$
achieve the maximum, pick one of them by any rule.)
The algorithm terminates when
$\bigcup_{i=1}^{j} t_iS= G$. We shall write $\nngr(S,G)$
for the final value of $j$, i.e., the number of translates
of $S$ in the covering produced by the greedy algorithm.

\begin{theorem}
  \label{Tgreedy1}
Suppose that $G$ is a finite group with $|G|=n$, and let
$S\subseteq G$ with $|S|=k\ge1$.
Define integers $n_j$
recursively by $n_0=n$ and
\begin{equation}\label{njdef}
  n_{j+1}=\floor{n_j(1-k/n)},
\qquad j\ge1.
\end{equation}
Then
\begin{equation}\label{g2a}
\nn(S,G)\le
  \nngr(S,G)
\le\min\set{j:n_j=0}.
\end{equation}
\end{theorem}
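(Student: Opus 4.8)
The plan is to analyse the greedy algorithm step by step, tracking the size of the uncovered region. Suppose that after $j-1$ steps the set $U_{j-1} = G \setminus \bigcup_{i=1}^{j-1} t_i S$ of uncovered points has size $u_{j-1}$. The key observation is an averaging argument: if we pick the next translate $t_j S$ uniformly at random from all $n$ left translates, then for a fixed point $g \in U_{j-1}$ we have $\P(g \in t_j S) = |S|/|G| = k/n$, since $g \in tS$ iff $t \in g S^{-1}$, a set of size $k$. Hence the expected number of points of $U_{j-1}$ covered by $t_j S$ is $u_{j-1} k / n$, so some choice of $t_j$ covers at least $\lceil u_{j-1} k/n \rceil$ of them, leaving at most $u_{j-1} - \lceil u_{j-1} k/n \rceil \le \lfloor u_{j-1}(1 - k/n) \rfloor$ uncovered. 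The greedy algorithm makes the best possible choice, so it does at least this well.

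The next step is to conclude by induction that $u_j \le n_j$ for all $j$, where $n_j$ is the sequence defined by \eqref{njdef}. The base case $u_0 = n = n_0$ is immediate. For the inductive step, from $u_j \le n_j$ and the greedy bound $u_{j+1} \le \lfloor u_j(1-k/n) \rfloor$, monotonicity of $x \mapsto \lfloor x(1-k/n) \rfloor$ gives $u_{j+1} \le \lfloor n_j(1-k/n) \rfloor = n_{j+1}$. In particular, once $n_j = 0$ we have $u_j = 0$, meaning the translates chosen so far already cover $G$; thus the greedy algorithm terminates by step $j$, giving $\nngr(S,G) \le \min\{j : n_j = 0\}$. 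Finally, $\nn(S,G) \le \nngr(S,G)$ is immediate since the greedy algorithm produces a valid covering. (One should note that $n_j = 0$ is eventually reached: as long as $n_j \ge 1$ we have $n_{j+1} \le n_j(1-k/n) < n_j$ when $k \ge 1$, so the integer sequence strictly decreases until it hits $0$ — unless $k = n$, in which case $n_1 = 0$ already.)

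There is essentially no serious obstacle here; the only point requiring a little care is the passage from the expectation bound to a per-step bound with the correct floor. One must check that $u - \lceil uk/n \rceil \le \lfloor u(1-k/n) \rfloor$, which holds because $u - \lceil uk/n\rceil$ is an integer that is $\le u - uk/n = u(1-k/n)$, hence $\le \lfloor u(1-k/n)\rfloor$. A secondary, purely cosmetic, point is that the recursion in \eqref{njdef} as written starts at $j \ge 1$, so that strictly speaking $n_1$ is defined both by $n_1 = \lfloor n_0(1-k/n)\rfloor$ only if one reads the recursion as also applying at $j = 0$; I would simply treat \eqref{njdef} as holding for all $j \ge 0$, which is clearly the intended reading and makes the induction above go through verbatim.
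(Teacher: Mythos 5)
Your proposal is correct and is essentially identical to the paper's proof: the same averaging argument over all $n$ translates bounds the number of uncovered points after each greedy step by $m_{j+1}\le m_j(1-k/n)$, followed by taking the floor (since the count is an integer) and an induction giving $m_j\le n_j$. The extra care you take with the ceiling/floor bookkeeping and the indexing in \eqref{njdef} is fine but not a substantive difference.
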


\begin{proof}
Let $m_j\=
\bigabs{{G\setminus\bigcup_{i=1}^j t_iS}}$ be the number of uncovered
elements after $j$ steps of the greedy algorithm.
Since a random choice of $t_{j+1}$ covers on average $m_j|S|/|G|$
of these, the optimal choice covers at least as many and thus
\begin{equation}\nonumber
  m_{j+1}\le m_j - m_j|S|/|G| = m_j(1-k/n).
\end{equation}
Since $m_{j+1}$ is an integer we have $m_{j+1}\le \floor{m_j(1-k/n)}$,
and so, by induction, $m_j\le n_j$ for every $j\ge0$. In
particular, if $n_j=0$ then $m_j=0$ and thus $\bigcup_{i=1}^jt_i S=G$,
which yields \eqref{g2a}.
\end{proof}

The simplest way to obtain an explicit bound on $\nn(S,G)$ from \refT{Tgreedy1}
is to observe that \eqref{njdef} implies $n_j\le n(1-k/n)^j<ne^{-jk/n}$.
Setting $\ell=\frac{n}{k}\log n$, this gives $n_{\ceil{\ell}}<1$,
so $n_{\ceil{\ell}}=0$ and $\nn(S,G)\le \ceil{\ell}$. In fact, we can do
better.

Let
 $H_k\=\sum_{j=1}^k 1/j$
denote the $k$th harmonic number. It is well known
that
$\log k < H_k \le \log k+1$.

\begin{corollary}\label{Cgreedy}
Let $G$, $S$, $n$, $k$ be as in \refT{Tgreedy1}.
Then
\begin{align}
  \nn(S,G)&\le\frac{n}k H_k 
\le\frac nk (\log k+1), \label{g2f}
\\
  \kk(S,G)&\le H_k\le \log k+1,\label{g2fk}
\\
  \eff(S,G)&\ge \frac1{H_k}\ge \frac1{\log k+1}. \label{g2fe}
\end{align}
\end{corollary}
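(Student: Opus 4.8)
The plan is to read everything off \refT{Tgreedy1}. Writing $J\=\min\set{j:n_j=0}$, that theorem already gives $\nn(S,G)\le\nngr(S,G)\le J$, and since $\mu$ is counting measure we have $\kk(S,G)=\nn(S,G)\,k/n$ and $\eff(S,G)=1/\kk(S,G)$; so once we prove
\begin{equation*}
J\le\frac nk H_k,
\end{equation*}
the bound $\kk(S,G)\le H_k$ follows on multiplying by $k/n$ and $\eff(S,G)\ge 1/H_k$ on taking reciprocals, and the remaining inequalities in \eqref{g2f}--\eqref{g2fe} are immediate from $H_k\le\log k+1$. Thus the whole content is the estimate $J\le\frac nk H_k$. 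The naive bound $n_j\le n(1-k/n)^j<ne^{-jk/n}$ gives only $J\le\frac nk\log n$, so the point is to use the integer rounding in \eqref{njdef} more carefully.

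First I would rewrite \eqref{njdef} as $n_{j+1}=n_j-\ceil{n_jk/n}$ (valid since $n_j\in\bbZ$). Hence the $j$th decrement depends only on which of the ``bands''
\begin{equation*}
I_\ell\=\set{v\in\bbZ:(\ell-1)n/k<v\le\ell n/k},\qquad \ell=1,\dots,k,
\end{equation*}
contains $n_j$: if $n_j\in I_\ell$ then $\ceil{n_jk/n}=\ell$, so $n_{j+1}=n_j-\ell$. Since $k\le n$, these bands partition $\set{1,\dots,n}$ and $I_1\cup\dots\cup I_m=\set{1,\dots,\floor{mn/k}}$. As $n_0=n>n_1>\dots>n_J=0$ with every $n_j$ ($j<J$) positive, hence in exactly one band, put $a_\ell\=\bigabs{\set{j:n_j\in I_\ell}}$, so $J=\sum_{\ell=1}^k a_\ell$. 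Two facts about the $a_\ell$ drive the proof: (i) telescoping the decrements, $\sum_{\ell=1}^k\ell a_\ell=n_0-n_J=n$; and (ii) for each $m<k$, the steps with $n_j$ in a band $\le m$ are exactly those with $n_j\le\floor{mn/k}$, and since $n_j$ is decreasing these form a final block of steps over which $n_j$ falls from its first value $\le\floor{mn/k}$ down to $0$; as this block contains $a_\ell$ steps in band $\ell$ for each $\ell\le m$, its total decrement is $\sum_{\ell=1}^m\ell a_\ell\le\floor{mn/k}\le mn/k$. (Degenerate cases — a band visited zero times, or $n_j$ jumping straight to $0$ past several bands — only shrink the left-hand sums.)

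Now I would feed (i) and (ii) into the identity $\frac1\ell=\frac1k+\sum_{m=\ell}^{k-1}\frac1{m(m+1)}$ and swap the order of summation:
\begin{align*}
J=\sum_{\ell=1}^k a_\ell
&=\frac1k\sum_{\ell=1}^k\ell a_\ell
 +\sum_{m=1}^{k-1}\frac1{m(m+1)}\sum_{\ell=1}^m\ell a_\ell\\
&\le\frac nk+\frac nk\sum_{m=1}^{k-1}\frac1{m+1}
 =\frac nk\Bigpar{1+\tfrac12+\dots+\tfrac1k}=\frac nk H_k,
\end{align*}
which is the required estimate.

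The computation is elementary; what must be got right is the pair of facts (i)--(ii) — in particular that the partial sums $\sum_{\ell\le m}\ell a_\ell$ are controlled by $mn/k$ — together with spotting the telescoping of $1/\ell$ that packages them into exactly $H_k$ rather than something weaker such as $\log k+1$. That is the only real obstacle. (If one prefers to argue directly on the greedy algorithm rather than on the $n_j$, there is a dual-fitting alternative: give each $g\in G$ weight $w(g)=1/c_g$, where $c_g$ is the number of elements covered for the first time at the greedy step that covers $g$, so that $\nngr(S,G)=\sum_g w(g)$; ordering the elements of any translate $tS$ by the time they are covered shows $\sum_{g\in tS}w(g)\le H_k$, and summing over all $t\in G$ and using that each $g$ lies in exactly $k$ translates $tS$ gives $k\,\nngr(S,G)\le nH_k$. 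But the band argument above is the one that flows straight out of \refT{Tgreedy1}.)
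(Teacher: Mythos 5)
Your proof is correct, and it takes essentially the same route as the paper's: both arguments bound the number of steps the recursion spends in the band where the decrement $\ceil{n_jk/n}$ equals $\ell$ by (in effect) $n/(k\ell)$, and sum over $\ell$ to get $\frac nk H_k$ — the paper does the bookkeeping via a piecewise-linear interpolation $N(t)$ and integration of $-N'(t)$, while you do it discretely via the partial-sum bounds $\sum_{\ell\le m}\ell a_\ell\le mn/k$ and Abel summation. Both are sound, so no further comparison is needed.
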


\begin{proof}
Let $N(t)$ be defined for $t\ge0$ 
by $N(j)=n_j$ for integer $j$ and by linear interpolation between integers.
Then $N(t)$ is continuous and non-increasing, with $N(0)=n$ and
$N(t)=0$ for large $t$.
For $0\le i\le k$, let $t_i\=\min\set{t:kN(t)/n\le i}$; thus 
$0=t_k<t_{k-1}<\dots< t_0$.

If $t>0$ is not an integer, then, setting $j\=\floor t$, so $j<t<j+1$,
we have 
\begin{equation*}
  -N'(t)=n_j-n_{j+1}=\ceil{n_jk/n}\ge\ceil{kN(t)/n}.
\end{equation*}
If $t_{i}<t<t_{i-1}$, then $kN(t)/n>i-1$ and thus
$-N'(t)\ge i$. Hence,
\begin{equation*}
  i(t_{i-1}-t_i) \le \int_{t_i}^{t_{i-1}} -N'(t)\,dt =
  N(t_{i})-N(t_{i-1})
=\frac nki - \frac nk(i-1)
=\frac nk.
\end{equation*}
Consequently,
using \eqref{g2a} and
noting that $t_0=\min\set{t:N(t)=0}$ is an integer by
the definition of $N$,
\begin{equation*}
\nn(S,G)
\le
\nngr(S,G)
\le
  t_0
=\sum_{i=1}^k(t_{i-1}-t_i)
\le \sum_{i=1}^k\frac n{ki}
=\frac nk H_k.
\qedhere
\end{equation*}
\end{proof}

\begin{remark}
Lorentz~\cite{Lorentz} applied the greedy algorithm in a slightly different 
context (coverings of the natural numbers by translates of an infinite set);
he remarked that his method applies (in our terminology) to bound $\nn(S,\bbzn)$
when $|S|=k$, although he only stated the weak bound $O(n\log k/k)$ for this case.

Newman~\cite{Newman_dens} proved a result related to Corollary~\ref{Cgreedy} 
but for coverings of the natural numbers, or equivalently of $\bbZ$:
he showed that if $S\subset \bbZ$ with $|S|=k$,
then $\kk(S,\bbZ)$ (which we define in \refS{SZ}) satisfies
$\kk(S,\bbZ)\le \log k+1$. His method (picking translates randomly
to cover most of $G$ and then using one translate for each remaining element)
applies just as well to subsets of a finite group, but gives
the slightly worse bound
$\kk(S,G)\le \min\{ks/n+k\exp(-ks/n)\}$
(corresponding to $\nn(S,G)\le \min\{s+n\exp(-ks/n)\}$),
where the minimum is over integer $s$.
Note that the minimum over real $s$ is $\log k+1$, attained at
$s=(n/k)\log k$.
\end{remark}

\begin{remark}\label{RJohnson}
There is an interesting connection between the covering problem
considered here, and the algorithmic problem \texttt{SET COVER};
we are grateful to a referee for bringing this to our attention.
In the \texttt{SET COVER} problem,
the input is a family $\cS=\{S_i\}$ of sets covering some ground
set $X$ (i.e., with union $X$), and the task is to find efficiently
a subcover of close to minimum size. Johnson~\cite{Johnson1}
showed that the greedy algorithm (pick sets one by one, choosing
one that includes the maximum number of uncovered points) achieves
an approximation ratio of at most $H_k$ when $|S_i|\le k$ for all $i$.
The proof is very different from that above: he considers the weight
function $w(U)=\sum_{S_i\in \cF} H_{|S_i\cap U|}$, where $\cF\subseteq\cS$
is a family covering $X$ with $|\cF|$ minimal. Let
$U\subseteq X$ denote the set of points as yet uncovered at some
stage. If the greedy algorithm next chooses a set $S_j$ covering
$r$ points of $U$, then $|S_i\cap U|\le r$ for all $S_i\in \cF$.
Each newly covered point decreases $|S_i\cap U|$ by one
for (at least) one $S_i\in \cF$, so the weight function
of the uncovered points decreases by at least $r\min\{H_a-H_{a-1}: a\le r\}=r/r=1$.
(Several different points may decrease the same $|S_i\cap U|$, but this is no problem.)
Hence the algorithm uses at most $w(X)=\sum_i H_{|S_i|}\le H_k|\cF|$ sets.

A simple modification of this argument gives an alternative proof of \refC{Cgreedy}.
Suppose $\cS$ is a cover of $X$, and $\cF\subseteq\cS$ is an $\ell$-cover of $X$,
so each $x\in X$ is in at least $\ell$ sets in $\cS$. Taking $w(U)=\sum_{S_i\in \cF} H_{|S_i\cap U|}$
as before, when running the greedy algorithm to find a ($1$-)cover of $X$,
the weight of the uncovered points decreases by at least $\ell$ at every
step, since each newly covered point is in at least $\ell$ sets $S_i\in\cF$.
If all $S_i$ have size at most $k$, the greedy algorithm
thus uses at most $w(X)/\ell=H_k|\cF|/\ell$ sets. Taking $X=G$, $\cS=\{t+S:t\in G\}$,
$\ell=k=|S|$ and $\cF=\cS$, this gives \eqref{g2f}.
\end{remark}

\begin{remark}
Together, \eqref{njdef} and \eqref{g2a} give the best bounds obtainable
by the simple argument in the proof of \refT{Tgreedy1}. However, this implicit form is not
very useful, which is why we give the explicit (inexact) estimates for $n_j$
above.
Other estimates of $n_j$ yield other estimates of $\nn(S,G)$.

For example, noting that $n_j\le{n(1-k/n)^j}$, if
\begin{equation*}
j=\floor{\log n/|\log(1-k/n)|}+1
>
\log n/|\log(1-k/n)|,  
\end{equation*}
then $n_j<1$ and thus $n_j=0$. Hence, 
\eqref{g2a} implies
\begin{equation}
    \nn(S,G)\le\frac{\log n}{|\log(1-k/n)|} +1. \label{g2b}
\end{equation}
Alternatively, taking
$
j=\ceil{\log k/|\log(1-k/n)|}
$,
we similarly obtain
\begin{equation*}
  n_j\le n(1-k/n)^j \le n/k;
\end{equation*}
thus at most $n/k$ further steps are needed and $\nngr(S,G)\le j+n/k$,
which yields 
  \begin{align}
  \nn(S,G)&\le\frac{\log k}{|\log(1-k/n)|}+\frac nk +1. \label{g2d}
\end{align}
If $k$ is comparable with $n$ (for example, $k=n/2$), then \eqref{g2b} and \eqref{g2d}
are better than \eqref{g2f}.
However, we are usually interested in $n$ much larger than $k$, or
even $n\to\infty$ with $k$ fixed. Then \eqref{g2f} and \eqref{g2d} give similar bounds
(with the former much cleaner), and \eqref{g2b} is much weaker.
\end{remark}

Returning to trivialities, we next note that one can bound the efficiency 
of the product of two sets in terms of those of the sets themselves.
\begin{lemma}\label{Lprod}
Let $S_1\subseteq G_1$ and $S_2\subseteq G_2$, where $G_1$ and $G_2$
are finite groups. 
Then
\begin{equation*}
 \eff(S_1,G_1)\eff(S_2,G_2) \le \eff(S_1\times S_2,G_1\times G_2) \le \min\{\eff(S_1,G_1),\eff(S_2,G_2)\}
\end{equation*}
or, equivalently,
\begin{equation}\label{kprod}
 \kk(S_1,G_1)\kk(S_2,G_2) \ge \kk(S_1\times S_2,G_1\times G_2) \ge \max\{\kk(S_1,G_1),\kk(S_2,G_2)\}.
\end{equation}
\end{lemma}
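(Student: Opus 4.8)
The plan is to prove the two-sided inequality \eqref{kprod} directly from the definitions, since the $\eff$ form is just the reciprocal. Write $n_i=|G_i|$, $k_i=|S_i|$, $\nn_i=\nn(S_i,G_i)$, and note $\mu(S_1\times S_2)=k_1k_2$, $\mu(G_1\times G_2)=n_1n_2$, so that
\begin{equation*}
\kk(S_1\times S_2,G_1\times G_2)=\frac{\nn(S_1\times S_2,G_1\times G_2)\,k_1k_2}{n_1n_2}.
\end{equation*}
Thus the whole statement reduces to the corresponding sandwich for the covering numbers, namely
\begin{equation*}
\nn_1\nn_2 \ \ge\ \nn(S_1\times S_2,G_1\times G_2)\ \ge\ \max\Bigl\{\nn_1\tfrac{n_2}{k_2},\ \nn_2\tfrac{n_1}{k_1}\Bigr\},
\end{equation*}
after multiplying through by $k_1k_2/(n_1n_2)$; the upper bound gives the left inequality of \eqref{kprod} and the lower bound gives the right one (using $\kk_i=\nn_i k_i/n_i$).

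For the \textbf{upper bound} on the product covering number, I would take near-optimal covers $T_1S_1=G_1$ with $|T_1|=\nn_1$ and $T_2S_2=G_2$ with $|T_2|=\nn_2$, and observe that $(T_1\times T_2)(S_1\times S_2)=(T_1S_1)\times(T_2S_2)=G_1\times G_2$, so $\nn(S_1\times S_2,G_1\times G_2)\le|T_1\times T_2|=\nn_1\nn_2$. This is the easy half and is essentially a one-line product-of-covers argument.

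For the \textbf{lower bound}, the point is that projecting a cover of the product onto one coordinate yields a cover of that factor. Suppose $T(S_1\times S_2)=G_1\times G_2$ with $|T|=\nn(S_1\times S_2,G_1\times G_2)$, and let $\pi_1\colon G_1\times G_2\to G_1$ be the projection, which is a group homomorphism. Since $\pi_1(S_1\times S_2)=S_1$ and $\pi_1$ is surjective and respects translation, $\pi_1(T)S_1=G_1$, so $|\pi_1(T)|\ge\nn_1$; but we need more than $|T|\ge\nn_1$. The sharper observation is that for each fixed $t_1\in\pi_1(T)$ the fibre of $T$ over $t_1$ has some size $a(t_1)$, and these sizes sum to $|T|$; meanwhile, the $S_2$-translates coming from a single fibre can cover at most $a(t_1)k_2$ of any relevant copy of $G_2$, so each $t_1\in\pi_1(T)$ must be ``used'' enough times — more precisely, $\sum_{t_1}a(t_1)\ge \nn_1\cdot\lceil n_2/k_2\rceil$ is not quite right either. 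The clean route is: fix any $g_2\in G_2$; the translates $tS_1\times tS_2$ that can possibly contain a point with second coordinate $g_2$ must have their $S_2$-part covering $g_2$, and among the first coordinates these still form a cover of $G_1$, giving $\nn_1\le|\{t\in T: \text{second coord.\ of }g_2\in(\text{$S_2$-part of }t)\}|$. Summing this inequality over all $g_2\in G_2$, the left side is $\nn_1 n_2$ and the right side counts each $t\in T$ exactly $k_2$ times (once for each element of the relevant translate of $S_2$), so $\nn_1 n_2\le |T|k_2$, i.e. $|T|\ge \nn_1 n_2/k_2$. Symmetrically $|T|\ge\nn_2 n_1/k_1$, which together with the easy upper bound completes \eqref{kprod}.

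The \emph{main obstacle} is getting the fibre-counting in the lower bound exactly right: one must resist the temptation to conclude merely $|T|\ge\nn_1$ from projecting, and instead run the double-counting over the second coordinate so that the factor $n_2/k_2$ appears. Once that averaging identity is set up correctly the rest is bookkeeping with the definitions of $\kk$ and $\eff$.
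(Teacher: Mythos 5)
Your proposal is correct and follows essentially the same route as the paper: the product-of-covers construction for the upper bound on $\nn(S_1\times S_2,G_1\times G_2)$, and for the lower bound a slicing argument (you fix the second coordinate and double-count incidences between points of $G_2$ and translates, which is the same averaging the paper performs over the copies $\{g\}\times G_2$). The only blemishes are cosmetic: the detour through the discarded fibre-counting attempt and the slightly garbled phrase ``second coord.\ of $g_2$'' could be cleaned up, but the final double-counting identity $\sum_{g_2}|T_{g_2}|=|T|\,|S_2|\ge \nn(S_1,G_1)\,|G_2|$ is exactly right.
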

\begin{proof}
It is more convenient to prove the second form \eqref{kprod}. For the first inequality, we simply
take the product of two coverings: 
choose $T_i\subseteq G_i$ with $T_iS_i=G_i$ and $|T_i|=\nn(S_i,G_i)$.
Then $(T_1\times T_2)(S_1\times S_2)=(T_1S_1)\times (T_2S_2)=G_1\times G_2$,
so $\nn(S_1\times S_2,G_1\times G_2)\le |T_1\times T_2|=\nn(S_1,G_1)\nn(S_2,G_2)$,
which gives the inequality.

For the second inequality, consider any covering of $G_1\times G_2$ by translates of $S_1\times S_2$.
For each $g\in G_1$, this induces a covering of the copy $\{g\}\times G_2$ of $G_2$
by translates of $S_2$. By definition of $\kk(S_2,G_2)$, each element is covered
on average at least $\kk(S_2,G_2)$ times. Averaging over $g$, we see that in
the covering of $G_1\times G_2$, each element is on average covered at least $\kk(S_2,G_2)$
times. Hence $\kk(S_1\times S_2,G_1\times G_2) \ge \kk(S_2,G_2)$. Similarly
$\kk(S_1\times S_2,G_1\times G_2) \ge \kk(S_1,G_1)$.
\end{proof}

Of course, \refL{Lprod} applies just as well in the compact setting,
with suitable modifications to the proof.
Unsurprisingly, the inequalities may be strict. 
For example, let $G_1=G_2$ be the cyclic group with 5 elements and
let $S_1=S_2$ be a subset of $G_1$ of size $4$. It is easy to see
that $\nn(S_i,G_i)=2$, while $\nn(S_1\times S_2,G_1\times G_2)=3$.
This gives $\kk(S_i,G_i)=8/5$ and $\kk(S_1\times S_2,G_1\times G_2)=48/25$.
We shall give a less trivial example where the first inequality
in \eqref{kprod} is strict in \refE{Eprod}.

\section{Random $S$ in a finite group}\label{Srandom}

In this section we shall show that the bounds in \refC{Cgreedy}
are close to best possible, in that the worst case efficiency for
sets $S$ of size $k$ really is as small as $(1+o(1))/\log k$.
Newman~\cite{Newman_dens} proved such a result for subsets of
the natural numbers
(or, equivalently, of $\bbZ$, or of $\bbzn$ with $n$ large), using a random construction;
although his argument adapts easily to our setting, we
shall give a different proof that seems to generalize more easily.

In fact, we shall show a little more: if $n=n(k)$ grows at a suitable
rate, with $n/k$ tending to infinity but not too fast, then almost
any $k$-element subset of any given $n$-element group
covers inefficiently. This contrasts with our results in~\refS{SZsmall},
where we show that in the cyclic case, if $n$ grows
very rapidly with $k$, then almost all $k$-element subsets
cover with close to optimal efficiency.

\begin{theorem}\label{finite_bad}
Given $0<\delta<1$, for all sufficiently large $k$ there exists a
finite group $G$ and a subset $S$ of size $k$ with $\kk(S,G)\ge
(1-\delta)\log k$. 
In particular, $G$ can be chosen as any group of order $\floor{k\log k}$,
for example a cyclic group.
\end{theorem}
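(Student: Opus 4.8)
The plan is to use the probabilistic method: fix $n=\floor{k\log k}$ and let $S$ be a uniformly random $k$-subset of $G$ (any group of order $n$), and show that with positive probability $\kk(S,G)\ge(1-\delta)\log k$. Equivalently, since $\kk(S,G)=\nn(S,G)k/n$, it suffices to show $\P\bigpar{\nn(S,G)\le m}$ is small for $m\=\floor{(1-\delta)n/k}$, which by our choice of $n$ is of order $(1-\delta)\log k$. Now $\nn(S,G)\le m$ means there exist $t_1,\dots,t_m\in G$ with $\bigcup_{i=1}^m(t_i+S)=G$. I would bound this by a union bound over all choices of $T=\set{t_1,\dots,t_m}$: there are at most $\binom{n}{m}\le n^m$ such choices, and for each fixed $T$ I need to estimate $\P(T+S=G)$.

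The key computation is therefore: for fixed $T$ with $|T|=m$, what is the probability that a random $k$-subset $S$ satisfies $T+S=G$? Note $T+S=G$ is equivalent to saying that for every $g\in G$, the translate $g-T$ meets $S$; equivalently, $S$ is a \emph{transversal} (hitting set) for the family $\set{g-T:g\in G}$ of $m$-element sets. A cleaner way: $T+S\ne G$ unless $S$ meets $-T+g$ for all $g$; the complement $G\setminus(T+S)=\set{g: (g-T)\cap S=\emptyset}$. So I want $\P\bigpar{\forall g\ \exists t\in T: g-t\in S}$. I would estimate this from above. One approach: greedily or by a direct counting argument, if $T+S=G$ then in particular $S$ together with $m$ translates covers $G$, so $|S|\cdot m\ge n$ must hold trivially (which it does, since $km\approx(1-\delta)n$ — wait, that gives $km<n$, contradiction!). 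Actually this is the crucial point: since $m=\floor{(1-\delta)n/k}$, we have $mk\le(1-\delta)n<n=|G|$, so $m$ translates of a $k$-set can cover at most $mk<n$ elements and hence \emph{cannot} cover $G$ at all. Hence $\P(\nn(S,G)\le m)=0$ deterministically, and $\nn(S,G)\ge m+1$, giving $\kk(S,G)\ge(m+1)k/n$.

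So in fact no randomness is needed for the lower bound $\kk(S,G)>(1-\delta)\log k$ in the crude form — one just uses \eqref{b0}: $\nn(S,G)\ge n/k$ always, hence $\kk(S,G)\ge1$... but that only gives $\kk\ge1$, not $\ge(1-\delta)\log k$. The point is that $n/k=\floor{k\log k}/k$ is only about $\log k$, so $\nn(S,G)\ge n/k\approx\log k$ gives $\kk(S,G)=\nn(S,G)k/n\ge1$ — this does \emph{not} suffice. So the probabilistic argument genuinely is needed: I must show $\nn(S,G)$ is, with positive probability, substantially \emph{larger} than the trivial bound $n/k$, namely at least $(1-\delta)n/k\cdot\log k/1$... let me recompute: we want $\kk=\nn k/n\ge(1-\delta)\log k$, i.e. $\nn\ge(1-\delta)(n/k)\log k\approx(1-\delta)(\log k)^2$. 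So I need to rule out $\nn(S,G)\le m$ for $m=\ceil{(1-\delta)(n/k)\log k}\approx(1-\delta)(\log k)^2$, and now $mk\approx(1-\delta)n\log k\gg n$, so the trivial obstruction disappears and the union bound is essential.

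So the real estimate I need is: $\P(T+S=G)\le$ (something small), uniformly over $|T|=m$. Here is the clean bound. Fix $T$. Build a maximal subset $G'\subseteq G$ such that the translates $\set{t_i+S}$... alternatively, pick a subset $A\subseteq G$ with $|A|=a$ that is "$T$-spread", meaning the sets $\set{g-T:g\in A}$ are pairwise disjoint; one can greedily find such $A$ with $a\ge n/m^2$ (each chosen $g$ forbids at most $m\cdot m$ others... actually $|{-T}+{-T+g}|\le m^2$ relevant forbidden points, so $a\ge n/m^2$). For $T+S=G$ we need $S$ to hit each of the $a$ disjoint $m$-sets $g-T$, $g\in A$. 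Given $|S|=k$, the probability that $S$ misses a fixed $m$-set entirely is $\binom{n-m}{k}/\binom{n}{k}\ge(1-m/(n-k))^k$, but I want an \emph{upper} bound on the probability of hitting all, so: the events "$S$ hits $g-T$" for $g\in A$ are negatively correlated (disjoint $m$-sets, fixed $|S|$), so $\P(S\text{ hits all})\le\prod_{g\in A}\P(S\text{ hits }g-T)\le(1-(1-m/n)^k)^a\le(1-(1-m/n)^k)^{n/m^2}$, roughly $(1-e^{-mk/n})^{n/m^2}$. With $m\approx(1-\delta)(\log k)^2$, $mk/n\approx(1-\delta)\log k\cdot(\log k)\cdot... $ hmm, $mk/n\approx(1-\delta)(\log k)^2\cdot k/(k\log k)=(1-\delta)\log k$, so $e^{-mk/n}\approx k^{-(1-\delta)}$, and $n/m^2\approx k\log k/(\log k)^4=k/(\log k)^3$. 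So $\P(T+S=G)\lesssim\exp\bigpar{-k^{-(1-\delta)}\cdot k/(\log k)^3}=\exp\bigpar{-k^{\delta}/(\log k)^3}$, which beats $n^m\le(k\log k)^{(\log k)^2}=\exp\bigpar{O((\log k)^3)}$ handily for large $k$. Hence the union bound gives $\P(\nn(S,G)\le m)\to0$, so for large $k$ there exists $S$ with $\kk(S,G)\ge(1-\delta)\log k$. The main obstacle is getting the negative-correlation / disjointness bookkeeping right so that the exponent $k^\delta/(\log k)^3$ in the tail bound genuinely dominates the $(\log k)^3$ in the log of the union-bound count; everything else (the arithmetic linking $n=\floor{k\log k}$, $m$, and the target $(1-\delta)\log k$) is routine, and the conclusion that $G$ may be taken cyclic is immediate since the argument used nothing about the group structure beyond $|G|=n$.
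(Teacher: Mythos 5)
Your proposal is correct and follows the same overall architecture as the paper's proof — a random $S$ of size (essentially) $k$ in a group of order $n=\floor{k\log k}$, a union bound over all candidate covering sets $T$ of the appropriate size $m\approx(1-\delta)(\log k)^2$, and a uniform exponential upper bound on $\P(T+S=G)$ strong enough to beat the $\exp(O((\log k)^3))$ count of such $T$. Where you differ is in the two technical ingredients. First, the paper works with the binomial model ($S$ obtained by including each element independently with probability $p=k/n$, then noting $\P(|S|\ge k)>1/2$ and deleting surplus elements), whereas you work directly with a uniform $k$-subset. Second, for the key estimate the paper writes the number $N$ of uncovered points as a sum of indicators $U_x$, observes that $U_x$ and $U_y$ are independent unless $yx^{-1}\in TT^{-1}$ (so each $x$ has at most $t^2$ dependent partners), and applies Janson's inequality to get $\P(N=0)\le\exp(-\mu/t^2)$ with $\mu=n(1-p)^t$; you instead greedily extract $a\ge n/|T-T|\ge n/m^2$ group elements whose translates $g-T$ are pairwise disjoint and multiply the hitting probabilities, obtaining $\exp(-a\,e^{-(1+o(1))mk/n})$, which is the same quantity $\exp(-\Theta(\mu/m^2))$. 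So the bounds are of identical strength; your disjointification is more elementary (and is close in spirit to the Newman-style alternative the paper mentions in a remark), while the paper's Janson formulation is chosen because it adapts directly to the stronger \refT{unbal}, where $S$ is drawn from an arbitrary subset $H$ and the sets $T^{-1}x\cap H$ have varying sizes.

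One step you should make explicit: the inequality $\P(S\text{ hits all }g-T,\ g\in A)\le\prod_{g\in A}\P(S\text{ hits }g-T)$ for a uniform $k$-subset and disjoint target sets. This is true, but it is not a triviality; it follows from the negative association of sampling without replacement (Joag-Dev and Proschan), since the events in question are increasing functions of disjoint blocks of the inclusion indicators. Either cite that, or sidestep it by switching to the binomial model as the paper does, where independence across disjoint sets is automatic and the passage back to exact size $k$ costs only a factor $2$. With that point supplied, the arithmetic you sketch ($e^{-mk/n}\approx k^{-(1-\delta)}$, $a\approx k/(\log k)^3$, tail $\exp(-k^{\delta-o(1)})$ versus count $\exp(O((\log k)^3))$) closes the argument, and the observation that nothing beyond $|G|=n$ was used gives the cyclic case.
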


\begin{proof}
Pick positive integers $n=n(k)$ and $t=t(k)$ so that $n/k\to\infty$,
\begin{equation}\label{tkn}
 tk/n \sim (1-2\delta/3) \log k
\end{equation}
as $k\to\infty$, and $t\le k^{\delta/6}$ for sufficiently large $k$.
For example, we may take $n=\floor{k\log k}$  
and $t=\floor{(1-2\delta/3)(\log k)^2}$.

Let $G$ be any group of order $n$.
Set $p=k/n=o(1)$, and let $S$ be a random subset of $G$ obtained by selecting
each element independently with probability $p$, so $|S|$ has a binomial
distribution with mean $k$.

We shall show that for any $T\subset G$ with $|T|=t$, the probability
that $TS=G$ is  $o(1/\binom{n}{t})$ as $k\to\infty$,
uniformly in $T$. It follows that the expected
number of `good' sets $T$ for a given $S$ is $o(1)$, so with probability
$1-o(1)$, there is no good set for $S$. Since $\Pr(|S|\ge k)> 1/2$,
for $k$ large enough it follows that with positive probability, $|S|\ge k$ and $\tau(S,G)> t$.
In particular, deleting some elements if necessary, there is some $S$
with $|S|=k$ and $\tau(S,G)>t$, and the result follows.

To carry out this plan, fix a set $T\subset G$ with $|T|=t$.
For $y\in G$, let $I_y$ denote the indicator function of the event
that $y$ is \emph{not} in $S$, so $\E(I_y)=1-p$.
For $x\in G$, let $U_x$ denote the indicator function
of the event that $x$ is not covered by $TS$.
Then 
\begin{equation}\label{udef}
 U_x=\prod_{z\in T} I_{z^{-1}x},
\end{equation}
so $\E(U_x)=(1-p)^t$.
Finally, let $N=\sum_x U_x$ be the number of uncovered points.
Then, for $k$ large enough,
\begin{multline}\label{42b}
 \mu \= \E(N) = n(1-p)^t = n \exp(-(1-o(1))tk/n)\\
 \ge n\exp(-(1-\delta/2)\log k) = k^{\delta/2}(n/k).
\end{multline}
Let us write $x\sim y$ if the indicator functions $U_x$ and $U_y$ are
dependent. 
From \eqref{udef}, we have $x\sim y$ if and only if there are $z,z'\in T$
such that $z^{-1}x=(z')^{-1}y$, i.e., such that $z'z^{-1}=yx^{-1}$.
In other words $x\sim y$ if and only if $yx^{-1}\in TT^{-1}=\{z'z^{-1}: z,z'\in T\}$.
Hence, for each $x$, the number of $y$ such that
$x\sim y$ is $|TT^{-1}|\le t^2$.
Let
\begin{equation}\label{Dest}
 \overline\Delta = \sum_{x\sim y} \E(U_xU_y) \le \sum_{x\sim y}\E(U_x) \le t^2\sum_x \E(U_x)=t^2\mu,
\end{equation}
where the sum runs over all ordered pairs $x\sim y$, including those
with $x=y$.
By Janson's inequality \cite[Theorem 2.18]{JLR},
\begin{equation}\label{n0}
\P(TS=G)=
 \Pr(N=0) \le \exp\left(-\frac{\mu^2}{\overline\Delta}\right) 
\le \exp\left(-\frac{\mu^2}{t^2\mu}\right) = \exp(-\mu/t^2).
\end{equation}
Recalling that $t\le k^{\delta/6}$, from \eqref{42b} we have $\mu/t^2\ge k^{\delta/6}(n/k)$.
For $k$ sufficiently large we have
\begin{equation*}
 \binom{n}{t}\le \left(\frac{en}{t}\right)^t \le k^t = \exp(t \log k)
 \le \exp\Bigl(\frac nk(\log k)^2 \Bigr).
\end{equation*}
For large $k$, $(\log k)^2$ is much smaller than $k^{\delta/6}$, and
it follows that
\begin{equation*}
\sum_{|T|=t}\P(TS=G)
\le
 \binom{n}{t} \exp\Bigpar{-\frac{\mu}{t^2}}
\le \exp\Bigl(\frac nk(\log k)^2 - \frac nk k^{\delta/6}\Bigr) =o(1),
\end{equation*}
and the result follows.
\end{proof}

\begin{remark}\label{Rmore}
The proof above shows that for
$n(k)$ in a certain range, namely $n=k^{1+o(1)}$ with $n/k\to\infty$,
almost all $k$-elements subsets $S$ of \emph{any}
group of order $n$ have $\kk(S,G)\ge (1-\delta)\log k$, 
in the  sense that the proportion of such sets tends to 1 as $k\to\infty$.
(See \refT{unbal} below for an even stronger result.)
As written, the argument extends to larger $n(k)$ only if we are willing to accept
a decrease in the claimed inefficiency. However, we were rather careless in the proof,
simply estimating $\E(U_xU_y)$ by $\E(U_x)$.
Indeed, one in fact has
\begin{equation*}
 \E(U_xU_y) = (1-p)^{|T^{-1}x\cup T^{-1}y|} = (1-p)^{2t-|T^{-1}x\cap T^{-1}y|} = (1-p)^{2t-r(yx^{-1})},
\end{equation*}
where $r(yx^{-1})$ is the number of ways of writing $yx^{-1}$ as $z'z^{-1}$
with $z'$, $z\in T$. We have effectively bounded $r(yx^{-1})$ by $t$ for all $yx^{-1}$, whereas
one could consider how often $r(yx^{-1})$ can be large. In doing so, one may have
to consider special sets $T$, such as (in the Abelian case)
those containing large arithmetic progressions,
separately: one needs the sum over $T$ of the probability that $TS=G$
to be small, rather than a uniform bound for each~$T$.  

Note that there is a limit to how rapidly $n$ can be allowed to grow
with $k$; we shall show in \refS{SZsmall} that if
$n$ grows very rapidly with $k$, or if $k$ is fixed and $n\to\infty$,
then in the cyclic group $\bbZ_n$, for any $\delta>0$ almost all $k$-element subsets 
have $\kk(S,\bbZ_n)\le 1+\delta$. In general, it seems to be an interesting
(if in this form rather vague) question
to determine the `typical' value of $\kk(S,G)$ for a random $k$-subset
of a (perhaps specifically cyclic) group of order $n$, with $k=k(n)$ a function
of $n$.
\end{remark}

\begin{remark}
It is easy to adapt an argument of Newman~\cite{Newman_dens}
to give an alternative proof of \refT{finite_bad}. Newman 
makes the nice observation (Lemma 1 in~\cite{Newman_dens}) that if
$A_1,\ldots,A_r$ are $a$-element subsets of some groundset $G$ such
that no element of $G$ appears in more than $b$ of the $A_i$, and
$G_p$ is the random set formed by selecting each element
of $G$ independently with probability $p$, then the probability
that $G_p$ meets every $A_i$ is at most $(1-(1-p)^a)^{\ceil{r/b}}$.
Taking $A_x=T^{-1}x$ for each $x\in G$, and applying this with $r=n$
and $a=b=t$, one obtains the stronger bound
\begin{equation*}
 \Pr(TS=G) \le (1-(1-p)^t)^{n/t} \le \exp(-\mu/t)
\end{equation*}
in place of \eqref{n0}.
The reason for writing the argument as we did is that
we shall need an extension, \refT{unbal} below; our proof
of \refT{finite_bad} adapts easily to give this, while Newman's
argument does not seem to.
\end{remark}

\begin{remark}
Returning to the \texttt{SET COVER} problem mentioned in \refR{RJohnson},
a referee has raised the interesting question of whether there
is a connection between (the proof of) \refT{finite_bad} and inapproximability
results for \texttt{SET COVER}, in particular the result of Feige~\cite{Feige}
giving a lower bound of $(1+o(1))\log k$ for the approximation ratio
modulo certain complexity theoretic assumptions. Here we have no answers.
\end{remark}

Perhaps the main interest of \refT{finite_bad} is that 
it shows, together with \refC{Cgreedy},
that the overall `worst-case' \cd{} $\kk(S,G)$ for sets of size
$k$ is asymptotically $(1-o(1))\log k$.
In fact, calculating slightly more carefully in the proof of \refT{finite_bad},
taking $n=\floor{k\log k}$ as before and choosing $t$ so that 
$tk/n= \log k-7\log\log k+O(1)$,
we can find $k$-element
sets with $\kk(S,G)\ge \log k-O(\log\log k)$, giving the following result.
\begin{theorem}\label{worstk}
Let $\worst_k$ denote the supremum of $\kk(S,G)$ over all $k$-element 
subsets $S$ of all finite groups $G$. Then
\begin{equation*}
 \log k-O(\log\log k) \le \worst_k\le \log k+1.
\end{equation*}
\vskip-\baselineskip
\nopf
\end{theorem}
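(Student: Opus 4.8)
The upper bound $\worst_k\le\log k+1$ is immediate from \refC{Cgreedy}, which gives $\kk(S,G)\le H_k\le\log k+1$ for every $k$-element $S$ in every finite group. So the content is the lower bound, and the plan is to re-run the probabilistic argument of \refT{finite_bad} with a sharper choice of the number of translates~$t$, pushing $tk/n$ as close to $\log k$ as the second-moment estimates allow.

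Concretely, I would keep $n=\floor{k\log k}$, let $G$ be any group of order $n$ (e.g.\ cyclic), set $p=k/n\sim1/\log k$, and let $S$ retain each element of $G$ independently with probability $p$, exactly as in \refT{finite_bad}. The new ingredient is to take $t$ to be the least positive integer with $tk/n\ge\log k-7\log\log k$; since $n/k\sim\log k$ this forces $t\sim(\log k)^2$ and $tk/n=\log k-7\log\log k+o(1)$. The two quantities that govern the argument are $\mu\=\E(N)=n(1-p)^t$ and $\overline\Delta\le t^2\mu$, as in \eqref{42b}--\eqref{Dest}. Using $\log(1-p)=-p+O(p^2)$ together with $tp^2=(tp)\,p=O\bigpar{(\log k)\cdot(1/\log k)}=O(1)$, I get
\begin{equation*}
 t\log(1-p)=-\frac{tk}{n}+O(1)=-\log k+7\log\log k+O(1),
\end{equation*}
hence $\mu=n(1-p)^t=\Theta\bigpar{(\log k)^{8}}$ and $\mu/t^2=\Theta\bigpar{(\log k)^{4}}$.

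Next, for each fixed $T$ with $|T|=t$, Janson's inequality gives $\P(TS=G)\le\exp(-\mu^2/\overline\Delta)\le\exp(-\mu/t^2)$ exactly as in \eqref{n0}, and a union bound over the $\binom nt$ sets $T$ yields
\begin{equation*}
 \sum_{|T|=t}\P(TS=G)\le\binom nt\exp\bigpar{-\mu/t^2}\le\exp\bigpar{t\log(en/t)-\mu/t^2}.
\end{equation*}
Since $t\log(en/t)=\Theta\bigpar{(\log k)^2\cdot\log k}=\Theta\bigpar{(\log k)^3}=o(\mu/t^2)$, the sum is $o(1)$. As in \refT{finite_bad}, it follows that for large $k$ there is, with probability at least $\tfrac12-o(1)>0$, a realisation with $|S|\ge k$ and $\nn(S,G)>t$; deleting $|S|-k$ elements of $S$ can only increase $\nn$, so some $S'\subseteq S$ with $|S'|=k$ has $\nn(S',G)>t$ and hence $\kk(S',G)=\nn(S',G)k/n>tk/n=\log k-7\log\log k+o(1)$. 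This gives $\worst_k\ge\log k-O(\log\log k)$ for all large~$k$, which together with the upper bound completes the proof.

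The only real decision is the constant $7$: one needs $\mu/t^2$ to dominate $t\log(en/t)=\Theta((\log k)^3)$, and with $tk/n=\log k-c\log\log k+O(1)$ one has $\mu/t^2=\Theta((\log k)^{c-3})$, so any $c>6$ works and $c=7$ is clean. I do not expect any genuine obstacle; the one spot to handle carefully is the step $t\log(1-p)=-tk/n+O(1)$, where the error is a bounded constant that does \emph{not} vanish (because $tp^2=\Theta(1)$), so it must be absorbed into the $O(1)$ in $tk/n$ rather than discarded. Everything else is precisely the bookkeeping already done for \refT{finite_bad}, with the crude constraint $t\le k^{\delta/6}$ there replaced by the far larger but still harmless $t=\Theta((\log k)^2)$.
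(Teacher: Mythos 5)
Your proposal is correct and is exactly the route the paper intends: the upper bound is \refC{Cgreedy}, and the lower bound is the proof of \refT{finite_bad} rerun with $n=\floor{k\log k}$ and $tk/n=\log k-7\log\log k+O(1)$, which the paper only sketches and you have filled in. Your bookkeeping ($\mu=\Theta((\log k)^{8})$, $\mu/t^2=\Theta((\log k)^{4})$ dominating $t\log(en/t)=\Theta((\log k)^{3})$, and the careful treatment of the $O(1)$ from $tp^2=\Theta(1)$) checks out.
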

\refR{Rmore} suggests that the lower bound above can be improved, leading
to the following question. 
\begin{question}\label{qworst}
What is the order of the difference $\log k-\worst_k$? In particular,
is it true that $\worst_k=\log k-O(1)$?
\end{question}

In the proof of \refT{finite_bad}, we generated $S\subseteq G$
by selecting uniformly from \emph{all} elements of $G$.
Perhaps surprisingly, this is not essential: the following extension shows that we can
start from \emph{any subset} of $G$, as long as its size
is much larger than the number $k$ of elements we pick.

\begin{theorem}\label{unbal}
Let $0<\delta<1$ be given, and suppose that $n=n(k)$ and $h=h(k)$ are such that
$n\ge h$, $h/k\to\infty$, and $n\le k^{1+\delta/7}$.
If $k$ is large enough then,
given any group $G$ of order $n$ and any \emph{subset} $H$ of $G$ with $|H|=h$,
for most $k$-element subsets $S$ of $H$ we have $\kk(S,G)\ge (1-\delta)\log k$.
\end{theorem}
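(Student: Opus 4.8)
The plan is to follow the proof of \refT{finite_bad} as closely as possible, with two changes. First, $S$ is now a uniform $k$-subset of the prescribed set $H$ rather than a binomial random subset of $G$, which I would handle by the standard comparison between the ``fixed size'' and ``independent'' models. Second, for a candidate set $T$ of $t$ translates the relevant obstacles become the sets $A_x\=T^{-1}x\cap H$ (for $x\in G$) rather than $T^{-1}x$; since $H$ may be far smaller than $G$, we can no longer treat $|A_x|$ as close to $t$, so the crucial lower bound on the expected number $\mu$ of uncovered points has to come from the averaging identity $\sum_{x\in G}|A_x|=t\,|H|$ and convexity rather than from a pointwise estimate.

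Concretely, since $n\ge h$ and $h/k\to\infty$ we have $n/k\to\infty$, so we may fix an integer $t=t(k)$ with $tk/n\sim(1-2\delta/3)\log k$, say $t=\floor{(1-2\delta/3)(n/k)\log k}$. As $1-2\delta/3>1-\delta$ and $k/n=o(1)$, this gives $tk/n\ge(1-\delta)\log k$ for $k$ large; and since $\nn(S,G)>t$ implies $\kk(S,G)=\nn(S,G)k/n>(1-\delta)\log k$, it suffices to show that $\Pr(\nn(S,G)\le t)\to0$ for $S$ a uniform $k$-subset of $H$. The event $\set{\nn(\,\cdot\,,G)\le t}$ is increasing, since enlarging $S$ cannot increase $\nn(S,G)$. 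Let $S'$ be obtained from $H$ by keeping each element independently with probability $q\=k/h=o(1)$; then $\E(|S'|)=k$, so $\Pr(|S'|\ge k)\ge\tfrac12$ (the median of $\Bi(h,q)$ equals its mean $k$), and, conditioning on $|S'|$ and using that for $m\ge k$ a uniform $m$-subset of $H$ contains a uniform $k$-subset, monotonicity gives $\Pr(\nn(S,G)\le t)\le2\Pr(\nn(S',G)\le t)\le2\sum_{|T|=t}\Pr(TS'=G)$.

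Next, fix $T\subseteq G$ with $|T|=t$. If $TH\neq G$ then $\Pr(TS'=G)=0$; otherwise put $A_x\=T^{-1}x\cap H$, let $U_x$ be the indicator of $S'\cap A_x=\emptyset$, and set $N\=\sum_{x\in G}U_x$ and $\mu\=\E(N)=\sum_x(1-q)^{|A_x|}$. Each $w\in H$ lies in exactly $|T|=t$ of the $A_x$, so $\sum_x|A_x|=th$, and convexity of $u\mapsto(1-q)^u$ gives $\mu\ge n(1-q)^{th/n}$; since $q\to0$, $q\cdot th/n=tk/n\sim(1-2\delta/3)\log k$ and $1-2\delta/3<1-\delta/2$, this yields $\mu\ge nk^{-(1-\delta/2)}=(n/k)k^{\delta/2}$ for $k$ large, a bound uniform in $T$. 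Exactly as in \refT{finite_bad}, $U_x$ and $U_y$ are dependent only if $yx^{-1}\in TT^{-1}$, so $\overline\Delta\=\sum_{x\sim y}\E(U_xU_y)\le|TT^{-1}|\,\mu\le t^2\mu$, and Janson's inequality \cite[Theorem~2.18]{JLR} gives $\Pr(TS'=G)=\Pr(N=0)\le\exp(-\mu^2/\overline\Delta)\le\exp(-\mu/t^2)$. Now $\binom nt\le(en/t)^t$ with $n/t\asymp k/\log k$ gives $\log\binom nt\le t\log k\le(n/k)(\log k)^2\le k^{\delta/7}(\log k)^2$, whereas $t\le(n/k)\log k$ and $n\le k^{1+\delta/7}$ give $\mu/t^2\ge k^{\delta/2}/\bigl((n/k)(\log k)^2\bigr)\ge k^{5\delta/14}/(\log k)^2$; since $5\delta/14>\delta/7$, the exponent $\log\binom nt-\mu/t^2$ tends to $-\infty$, so $\sum_{|T|=t}\Pr(TS'=G)\le\binom nt\exp(-\mu/t^2)\to0$ and the theorem follows.

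The step I expect to be the main obstacle is the lower bound $\mu\ge(n/k)k^{\delta/2}$: with $S$ trapped inside a possibly tiny $H$, the pointwise bound $|A_x|\le t$ is hopelessly weak, and only the averaging identity $\sum_x|A_x|=th$ combined with convexity recovers a usable estimate. The resulting budget --- roughly $k^{\delta/2}$ gained from $\mu$ against the $(n/k)^2(\log k)^2$ lost when dividing by $t^2$ --- is exactly what forces the hypothesis $n\le k^{1+\delta/7}$; the comparison between the two random-set models and the bookkeeping with $\binom nt$ are routine.
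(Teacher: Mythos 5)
Your proposal is correct and follows essentially the same route as the paper: the same choice of $t$ with $tk/n\sim(1-2\delta/3)\log k$, the same key step of replacing the pointwise bound on $|T^{-1}x\cap H|$ by the averaging identity $\sum_x a_x=th$ plus convexity to recover the lower bound on $\mu$, and the same application of Janson's inequality and union bound over $T$. The only (harmless) difference is that you make the comparison between the uniform $k$-subset model and the binomial model explicit via monotonicity of the covering event, where the paper works with the binomial model throughout and treats this reduction as routine.
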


More precisely, as $k\to\infty$,
this holds for a fraction $1-o(1)$ of all $k$-element
subsets $S$ of $H$.

\begin{proof}
We modify the proof of Theorem~\ref{finite_bad} very slightly.
As before, choose $t=t(k)\sim (n/k)(1-2\delta/3)\log k$;
the assumption on $n$ ensures that $t\le k^{\delta/6}$ for $k$ large.

This time, set $p=k/h=o(1)$, and form $S$
by selecting elements of $H$ independently with probability $p$,
so $|S|$ again has a binomial distribution with mean $k$, and $\Pr(|S|\ge k)\ge 1/2$.

For $y\in H$ let $I_y$ be the indicator function of the event
that $y\notin S$, and, as before,
for $x\in G$, let $U_x$ denote the indicator function
of the event that $x$ is not covered by $TS$.
This time
\begin{equation*}
 U_x=\prod_{z\in T\: :\: z^{-1}x\in H} I_{z^{-1}x},
\end{equation*}
so $\E(U_x)=(1-p)^{a_x}$, where $a_x=|H\cap T^{-1}x|$.
Note that
\begin{equation*}
 \sum_x a_x=|\{(x,z):x\in G, z\in T, z^{-1}x\in H\}| = |\{(y,z):y\in H,z\in T\}| = th.
\end{equation*}
With $N=\sum_x U_x$ the number of uncovered points as before, by convexity we have
\begin{equation*}
  \begin{split}
\mu &= \E(N) = \sum_x (1-p)^{a_x} \ge n(1-p)^{\sum_x a_x/n} =
n(1-p)^{th/n}
\\
& = n\exp(-(1-o(1))pth/n).
  \end{split}
\end{equation*}
Since $pth/n=tk/n$, this yields the same lower bound \eqref{42b} that we obtained in the
proof of \refT{finite_bad}. 
The estimate \eqref{Dest} is valid as is, so the rest of the proof of \refT{finite_bad}
carries over unmodified.
\end{proof}

We close this section by noting a simple consequence of \refT{finite_bad}.
\begin{example}\label{Eprod}
By \refT{finite_bad}, for every sufficiently large $k$ there exists an $n$
and a set $S\subset\bbZ_n$ with $|S|=k$ such that $\kk(S,\bbZ_n)\ge \log k/2$.
Consider the product set $S\times S\subset \bbZ_n\times \bbZ_n$.
By \refC{Cgreedy}, if $k$ is large enough, then
\begin{equation*}
 \kk(S\times S,\bbZ_n\times \bbZ_n) \le \log|S\times S|+1 
=2\log k+1 < (\log k/2)^2 \le \kk(S,\bbZ_n)^2.
\end{equation*}
This gives another example where the first inequality in \eqref{kprod} is strict.
\end{example}

\section{$G=\bbZ$}\label{SZ}

In this section we consider coverings of $G=\bbZ$, which is perhaps
the simplest non-compact group. 
In 1954, Lorentz~\cite{Lorentz} and Erd\H os~\cite{Erdos} considered coverings 
of $\bbZ$ by translates of an infinite subset $S$;
here we shall only consider coverings by 
finite, non-empty subsets $S\subset\bbZ$, first
studied by Newman~\cite{Newman_dens}. (Actually, all three papers
concern coverings of the natural numbers rather than $\bbZ$; this makes no difference.)
In this case, trivially,
$\nn(S,\bbZ)=\infty$, so to make sense of the notion
of covering efficiency we need to modify the definitions somewhat.

Let $\nn(S,n)$ be the smallest number of translates of $S$ that cover
the set $[n]\=\set{1,\dots,n}$, \ie,
\begin{equation}\label{tzn}
  \nn(S,n)\=\min\set{|T|:T+S\supseteq [n]}.
\end{equation}
Obviously, the number of translates required to cover any other
interval of integers of length $n$ is the same.
It follows immediately that $\nn(S,n)$ is subadditive:
\begin{equation*}
\nn(S,m+n)\le\nn(S,n)+\nn(S,m)  
\end{equation*}
for all $m,n\ge1$.
By a well-known result,
this implies the existence of the limit
\begin{equation}\label{tz}
  \nn(S)\=\lim_\ntoo \nn(S,n)/n
\end{equation}
and the equality
\begin{equation}\label{tzinf}
  \nn(S)=\inf_{n\ge1} \nn(S,n)/n.
\end{equation}
We call $\nn(S)$ the \emph{covering density} of $S$.
Newman~\cite{Newman_dens} 
used the term `codensity' for the minimum density $c(S)$
of a set $T\subseteq \bbZ$ with a defined density such that $T+S=\bbZ$.
It is easy to see that $\nn(S)$ and $c(S)$ are equal;
our more concrete definitions will be useful later.
(Schmidt and Tuller \cite{SchmidtTullerI,SchmidtTullerII} call $c(S)$
the `minimal covering frequency', and use
`minimal covering density' for our $\kk(S)$ below.) 

In analogy with \eqref{ksg}, we define
the \emph{finite \cd{}}
\begin{align}
  \label{kzn}
\kk(S,n)&\=\frac{\nn(S,n)|S|}{n},
\intertext{which, for large $n$, is essentially
the average number of translates covering each point in an optimal
covering of $[n]$, the \emph{(asymptotic) \cd}}
\label{kz}
  \kk(S)&\= \lim_\ntoo\kk(S,n)=\nn(S)|S|,
\intertext{and the \emph{efficiency} of $S$}
  \eff(S)&\=1/\kk(S).
\label{ez}
\end{align}
We sometimes use the notation $\nn(S,\bbZ)$ \etc{} for emphasis and clarity.

For a finite, non-empty $S\subset\bbZ$ we trivially have
\begin{equation}
\frac{n}{|S|}\le \nn(S,n) \le n
\end{equation}
and thus 
\begin{align}
\xfrac{1}{|S|}&\le \nn(S) \le 1,
\\
1&\le \kk(S) \le |S|,
\\
|S|\qw&\le \eff(S) \le 1.
\end{align}
We shall improve these bounds in \refT{TZgreedy} below.
Again, there are
many examples of \emph{efficient} sets $S$ with $\kk(S)=\eff(S)=1$,
as in \refE{Eresidue}, but we are mainly interested in the others.

Let $\ga_k$ be the minimal efficiency over $k$-element subsets
of $\bbZ$, i.e.,
\begin{equation}\label{ga}
\ga_k\=
\inf\set{\eff(S,\bbZ):|S|=k}.
\end{equation}
Note that $k\qw\le\ga_k\le1$, where the lower bound will be
improved later.
It is an interesting problem to find $\ga_k$ exactly for small
$k$.
Trivially, $\ga_1=1$. It is easy to see that $\ga_2=1$ too.
Indeed, suppose that $S\subset\bbZ$ with $|S|=2$. Translating
if necessary, we may assume that
$S=\set{0,a}$ with $a\ge1$. Then \set{S+i:0\le i<a} is a
partition of \set{0,\dots,2a-1}, and thus $\cT\=\set{S+i+2aj:0\le
  i<a,\;j\in\bbZ}$ is a partition of $\bbZ$.
It follows that $\eff(S)=1$.
Hence, $\ga_2=1$.

Newman~\cite{Newman_dens} showed that $\ga_3=5/6$, and 
(according to~\cite{Weinstein}) conjectured
that $\ga_4=3/4$; Weinstein~\cite{Weinstein} showed that $\ga_4\le 3/4$,
and used a computer to prove a lower bound of $0.7354...$
Checking all subsets $S$ of a small interval
suggests the values $\ga_4=3/4$, $\ga_5=11/5$ and $\ga_6=2/3$; see \refR{remarkvals}.

We may talk about the efficiency \etc{} of a particular covering
$\cT=\set{t_i+S}$ of $\bbZ$ or $\bbZ^+$ too.
We define
\begin{align}
\nn(\cT,n)&\=|\set{i:0\le t_i<n}|,
\label{ttzn}
\intertext{and, provided the limit exists, the \emph{covering density}}
  \nn(\cT)&\=\lim_\ntoo \nn(\cT,n)/n,
\\
\intertext{the \emph{covering multiplicity}}
  \kk(\cT)&\= \nn(\cT)|S|,
\\
\intertext{and the \emph{covering efficiency}}
  \eff(\cT)&\=\frac1{\kk(\cT)}
=\frac1{\nn(\cT)|S|}
\label{etz}
.
\end{align}
Actually, we shall only use these notions when $\cT$ is periodic; in
this case the limit exists and, if $\cT$ has period $\ell$, then
\begin{equation}
  \nn(\cT)=\nn(\cT,\ell)/\ell.
\label{ttzl}
\end{equation}

From the definitions above, it is not immediately clear that there is
an infinite covering $\cT$ with $\kk(\cT)=\kk(S)$. However, the reader
will not fall off her chair on learning that this is indeed the
case. In fact, unsurprisingly, there is always an optimal periodic
covering; see, for example, Schmidt and Tuller~\cite{SchmidtTullerI}. 
We shall give a different proof that gives a better bound
on the period, and, perhaps more importantly, leads to a practical
algorithm for calculating $\kk(S)$.

Suppose without loss of generality that $S\subseteq [0,s]$, $s\ge 1$, with
$0\in S$.
Suppose that for some $N$, which we think of as large,
$T\subset \bbZ$ is such that $T+S$ covers $[N]$, with $|T|$ minimal,
i.e., $|T|=\nn(S,N)$. Note that $\min T\ge -s$.
For $0\le m\le N$, let $T_m=\{t\in T: t\le m\}$.
Then $T_m+S$ covers $[m]$, and $\max(T_m+S)\le m+s$.
Let $A_m=(T_m-m+s)\cap [s]$, so $A_m$ records which of the last $s$ possible
elements of $T_m$ are in fact present. 
Writing $\sigma$ 
for the $0/1$-sequence corresponding to $T$,
note that each $A_m$ corresponds to a block of $s$ consecutive terms of $\sigma$.

Passing from $m$ to $m+1\le N$, there are two possibilities.
In the first case $m+1\notin T$, so $T_{m+1}=T_m$ and $A_{m+1}=(A_m-1)\setminus\{0\}$.
In this case $S+T_m=S+T_{m+1}$ must cover $m+1$, so we must have $s+1\in A_m+S$.
In the second case, $m+1\in T$, so $A_{m+1}=(A_m-1)\setminus\{0\}\cup\{s\}$.

Let $\GS$ be the weighted directed graph defined as follows. For the vertex
set $V$ we take the power set $\Ps([s])$ of $\{1,2,\ldots,s\}$.
Send a directed edge from $A$ to $B$ with weight $0$ if and only if
$s+1\in A+S$
and $B=(A-1)\setminus\{0\}$, and send
a directed edge from $A$ to $B$ with weight $1$ if $B=(A-1)\setminus\{0\}\cup\{s\}$.
Given a (directed) walk $\gamma$ in $\GS$, let us write $|\gamma|$ for its length
(number of edges), and $w(\gamma)$ for its weight, i.e., the sum of the weights
of its edges.
Then  $\gamma = A_0A_1\cdots A_N$ is a walk in $\GS$
with $|\gamma|=N$ and $w(\gamma)=\sum_{m=0}^{N-1} |T_{m+1}\setminus T_m|=|T\setminus T_0|$.
Since $\min T\ge -s$, we have $w(\gamma)=|T|+O(1)$.

The \emph{de Bruijn graph} associated to binary sequences
of length $s\ge 1$ is the 2-in, 2-out directed graph
whose vertices are binary sequences of length $s$,
with an edge from $A$ to $B$ if the last $s-1$ terms of $s$ agree
with the first $s-1$ terms of $B$, i.e., if $A$ and $B$
can appear as subsequences of $s$ consecutive terms
in a single (infinite) binary sequence $\sigma$, with $B$ starting
one place later than $A$. This graph
was introduced independently by de Bruijn~\cite{deBruijn} and Good~\cite{Good}
in 1946. It is well known and easy to check
that directed $n$-edge walks in the de Bruijn graph
correspond bijectively to binary sequences of length $s+n$.

Identifying subsets of $[s]$ with binary sequences of length $s$
in the usual way,
the unweighted directed graph underlying $\GS$ is a subgraph of the de Bruijn graph.
It is not hard to check that, given a walk $\gamma$ in $\GS$ with $|\gamma|=N$,
one can construct a set $T$ of size at most $w(\gamma)+s$ so that $T+S$
covers $[N]$. We omit the details since we only need the cycle
case, which we prove below. As usual, by a \emph{cycle}
in a directed graph $G$ we mean a directed walk $v_0v_1\cdots v_{n-1}v_0$
in $G$ in which the vertices $v_0,\ldots,v_{n-1}$ are distinct.

\begin{theorem}\label{T:rat}
Let $\GS$ be the directed graph defined above, and let $\alpha=\alpha(S)$
be the minimum of $w(\gamma)/|\gamma|$ over all cycles $\gamma$ in $\GS$.
Then $\nn(S)=\alpha$, and there is a periodic covering $\cT$ with
$\nn(\cT)=\nn(S)$. In particular, $\nn(S)$ is rational
for every finite, non-empty $S\subset\bbZ$.
\end{theorem}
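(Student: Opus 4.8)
The plan is to prove the two inequalities $\nn(S)\ge\alpha$ and $\nn(S)\le\alpha$ separately. The lower bound comes from the walk-to-cover correspondence already set up before the theorem; the upper bound is obtained by running the optimal cycle periodically around $\bbZ$, which simultaneously produces the claimed periodic covering $\cT$. Rationality of $\nn(S)$ is then automatic, since $\alpha=w(\gamma)/|\gamma|$ is a ratio of two non-negative integers.

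\emph{Lower bound.} Fix $N$ and let $T$ be a cover of $[N]$ by translates of $S$ with $|T|=\nn(S,N)$; by minimality $\min T\ge -s$. As explained above, $T$ gives a walk $\gamma=A_0A_1\cdots A_N$ in $\GS$ with $|\gamma|=N$ and $w(\gamma)=|T\setminus T_0|\le|T|=\nn(S,N)$. Now repeatedly delete from $\gamma$ a closed sub-walk whenever some vertex is repeated, until no vertex repeats; a closed walk decomposes into cycles, with weights and lengths additive over the decomposition, and every cycle $C$ in $\GS$ satisfies $w(C)\ge\alpha|C|$, so each deletion removes weight at least $\alpha$ times the length removed. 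The residual walk is a path, hence has fewer than $|V|=2^{s}$ edges, so $w(\gamma)\ge\alpha\bigl(N-2^{s}\bigr)$. Therefore $\nn(S,N)\ge\alpha(N-2^{s})$, and dividing by $N$ and letting $N\to\infty$ gives $\nn(S)\ge\alpha$.

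\emph{Upper bound and periodic covering.} Let $\gamma=v_0v_1\cdots v_{\ell-1}v_0$ be a cycle attaining the minimum, and let $w_i\in\{0,1\}$ be the weight of the edge $v_i\to v_{i+1\bmod\ell}$, so $\sum_{i=0}^{\ell-1}w_i=\alpha\ell$. Define an $\ell$-periodic set $T\subseteq\bbZ$ by $m\in T\iff w_{(m-1)\bmod\ell}=1$. The key point is that for every $m$ the window $(T-m+s)\cap[s]$ equals $v_{m\bmod\ell}$. Indeed, in $\GS$ each edge, viewed inside the de Bruijn graph, shifts its tail left and appends its weight; an easy induction then shows that, for $1\le j\le s$, the $j$-th coordinate of $v_{m\bmod\ell}$ (as a length-$s$ binary string) is $w_{(m+j-s-1)\bmod\ell}$, which is exactly the indicator of $m+j-s\in T$, i.e.\ the $j$-th coordinate of $(T-m+s)\cap[s]$. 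Granting this, $T+S=\bbZ$: if $m+1\in T$ then $m+1$ is covered by $(m+1)+S$; and if $m+1\notin T$ then $w_{m\bmod\ell}=0$, so the edge of $\gamma$ leaving $v_{m\bmod\ell}$ has weight $0$, whence $s+1\in v_{m\bmod\ell}+S=\bigl((T-m+s)\cap[s]\bigr)+S$, which translates precisely into $m+1\in T+S$. Hence $\cT=\{t+S:t\in T\}$ is a periodic covering of $\bbZ$ of period $\ell$ with $\nn(\cT)=|T\cap\{0,\dots,\ell-1\}|/\ell=\bigl(\sum_i w_i\bigr)/\ell=\alpha$; restricting $\cT$ to $[n]$ uses at most $\alpha n+O(1)$ translates, so $\nn(S)\le\alpha$.

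Combining the bounds gives $\nn(S)=\alpha$, witnessed by $\cT$, and $\nn(S)=\bigl(\sum_i w_i\bigr)/\ell\in\bbQ$. I expect the main obstacle to be the ``key point'' above --- matching the window sequence of the periodic set $T$ with the vertices of the cycle --- which is precisely the cycle case of the walk-to-cover correspondence the authors flagged; the rest (the reductions on $\gamma$, the cycle decomposition, and the density bookkeeping) is routine once that identification is in place.
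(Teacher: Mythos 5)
Your proof is correct and follows essentially the same route as the paper's: the lower bound via decomposing the walk arising from an optimal cover of $[N]$ into cycles (each of average weight at least $\alpha$) plus a short residual path, and the upper bound by running a minimum-mean cycle periodically to produce the covering $\cT$ of density $\alpha$. The only difference is that you spell out the ``key point'' (that the windows $(T-m+s)\cap[s]$ of the periodic set reproduce the cycle's vertices) in more detail than the paper, which states it without the coordinate-by-coordinate induction; your verification of it is correct.
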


\begin{proof}
Suppose first that $T+S\supseteq [N]$,
with $|T|$ minimal. We have shown that there is a walk $\gamma$
with $|\gamma|=N$ and $w(\gamma)=|T\setminus T_0|\le |T|$.
Now $\gamma$ may be written as a union of cycles together with a union of vertex
disjoint paths. It follows that $w(\gamma)\ge \alpha(N-|V|)\ge \alpha N-2^s$.
Hence $\nn(S,N) \ge \alpha N -2^s$. Letting $N\to\infty$, we see that
$\nn(S)\ge \alpha$.

Conversely, let $\gamma=A_0A_1\dots A_\ell$ be a cycle in $G_S$ with length
$\ell$ and weight $w(\gamma)=\alpha \ell$. Set $A_{k+\ell m}=A_k$ for
all $0\le k<\ell$ and $m\in \bbZ$,
and let $T$ be the set of $k\in \bbZ$ such that $A_kA_{k+1}$
has weight $1$. Then $(A_k)$ is a 2-way infinite walk in $\GS$, and hence
in the de Bruijn graph, and each $A_k$ codes the intersection of $T$
with the corresponding interval of length $s$.
For each $k$, the fact that $A_kA_{k+1}\in E(\GS)$ ensures that $T+S$ covers $k+1$.
Clearly, $\cT=\{t+S:t\in T\}$
is periodic with density $\ga$,
so we have $\nn(S)\le \nn(\cT)=\alpha$, completing the proof.
\end{proof}

Let $\ell(S)$ denote the minimum period of an optimal periodic 
covering of $\bbZ$ by translates of $S$,
and let $\ell(s)$ denote the maximum of $\ell(S)$ over
all sets $S\subseteq [0,s]$, i.e., over all subsets
of $\bbZ$ with diameter at most $s$.
Schmidt and Tuller~\cite{SchmidtTullerI} show that $\ell(s)\le s(2^s+1)$,
and state that an improvement of this bound would be of interest.
The proof of \refT{T:rat} shows that $\ell(S)\le |G_S|=2^s$; in fact,
we can show a little more.

\begin{theorem}\label{Tperiod}
Suppose that $S\subseteq [0,s]$. Then
\begin{equation*}
 \ell(S)\le \min\Bigl\{ 2^s,\ 2\sum_{t\le 2s\nn(S)}\binom{s}{t} \Bigr\}.
\end{equation*}
Furthermore, there is a function $f(k)$ tending to $0$ as
$k\to\infty$ such that $\ell(S)\le 2^{sf(k)}$ for all $S\subseteq
[0,s]$ with $|S|=k$. 
\end{theorem}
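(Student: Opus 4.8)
The plan is to read off both bounds from the directed graph $\GS$ and the cycle/covering correspondence of \refT{T:rat}. We may assume $0\in S$ (translating $S$ changes neither $\ell(S)$ nor $\nn(S)$) and, discarding the trivial case $s=0$, that $s\ge1$. Since $\GS$ is finite, the minimum defining $\ga=\ga(S)$ is attained, so fix an \emph{optimal} cycle $\gamma=A_0A_1\cdots A_\ell$, one with $w(\gamma)/|\gamma|=\ga$; thus $|\gamma|=\ell$ and $w(\gamma)=\ga\ell=\nn(S)\ell$. As in the second half of the proof of \refT{T:rat}, $\gamma$ yields a periodic covering $\cT$ with $\nn(\cT)=\nn(S)$ whose period divides $\ell$, so $\ell(S)\le\ell$ and it suffices to bound $\ell$. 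The bound $\ell\le2^s$ is immediate, since the vertices $A_0,\dots,A_{\ell-1}$ of a cycle are distinct and $V(\GS)=\Ps([s])$ has $2^s$ elements.

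For the second bound I would argue by double counting together with Markov's inequality. Extend $\gamma$ to a two-way infinite walk by $A_{k+\ell m}=A_k$ and let $T$ be the associated $\ell$-periodic set; each $A_k$ records $T\cap\{k-s+1,\dots,k\}$, a window of length $s$. Since $w(\gamma)=\nn(S)\ell$ equals the number of weight-$1$ edges in one period, $T$ contains $\nn(S)\ell$ integers per period, and reindexing the windows,
\begin{equation*}
\sum_{k=0}^{\ell-1}|A_k|=\sum_{i=0}^{s-1}\bigl|T\cap\{-i,\dots,\ell-1-i\}\bigr|=s\,\nn(S)\,\ell,
\end{equation*}
each such run of $\ell$ consecutive integers meeting the $\ell$-periodic set $T$ in $\nn(S)\ell$ points. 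Hence the average of $|A_k|$ over $0\le k<\ell$ is $s\nn(S)$ (and $\nn(S)>0$), so by Markov at most $\ell/2$ of these indices satisfy $|A_k|\ge2s\nn(S)$. For the remaining at least $\ell/2$ indices the $A_k$ are \emph{distinct} subsets of $[s]$ of size at most $2s\nn(S)$, whence $\ell/2\le\sum_{t\le2s\nn(S)}\binom st$; together with $\ell\le2^s$ and $\ell(S)\le\ell$, this is the first assertion.

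For the uniform statement, feed in $\kk(S)\le\log k+1$ for $S\subset\bbZ$ with $|S|=k$ (\refT{TZgreedy}; see also Newman~\cite{Newman_dens}). Since $\kk(S)=\nn(S)k$, this gives $\nn(S)\le(\log k+1)/k$, hence $2s\nn(S)\le\theta s$ with $\theta=\theta(k)\=2(\log k+1)/k\to0$. There is a function $\phi$ with $\phi(\theta)\to0$ as $\theta\to0$ and $\sum_{t\le\theta s}\binom st\le2^{s\phi(\theta)}$ for all $s$ (for $\theta\le1/2$ one may take $\phi$ to be the binary entropy function, and $\phi\=1$ otherwise); combining this with $s\ge k-1$ to absorb the leading factor $2$,
\begin{equation*}
\ell(S)\le2\sum_{t\le\theta s}\binom st\le2\cdot2^{s\phi(\theta)}\le2^{s(\phi(\theta)+1/(k-1))}.
\end{equation*}
So one may take $f(k)\=\min\{1,\,\phi(2(\log k+1)/k)+1/(k-1)\}$, which tends to $0$.

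The one slightly fussy point I expect is the bookkeeping in the double-counting step — keeping consistent the $\ell$-periodicity of $T$, the length-$s$ windows $A_k$, and the identification of $w(\gamma)$ with the number of weight-$1$ edges per period — and checking that the two appearances of $\ell/2$ (from Markov, and as the count of ``light'' vertices) fit together. Everything else is immediate from \refT{T:rat} or a standard estimate.
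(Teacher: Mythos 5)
Your proof is correct and follows essentially the same route as the paper: take the optimal cycle from Theorem~\ref{T:rat}, use distinctness of its vertices for the $2^s$ bound, observe that the average vertex weight is $s\nn(S)$ so by Markov at least half the (distinct) vertices are subsets of $[s]$ of size at most $2s\nn(S)$, and then combine $\nn(S)\le(\log k+1)/k$ with the standard entropy estimate for the second statement. The only differences are cosmetic (you make the double-counting and the choice of $f(k)$ explicit where the paper leaves them to the reader).
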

\begin{proof}
Let $\cT$ be the periodic covering constructed in the proof of \refT{T:rat}, 
corresponding to the cycle $\gamma$ in $G_S$. Writing $w(v)$ for the \emph{weight} of a vertex $v$,
i.e., the fraction of $1$s present in the corresponding $0/1$-sequence of length $s$,
the average of $w(v)$ over the vertices in $\gamma$ is $w(\gamma)/|\gamma|
=\nn(\cT)=\nn(S)$. Hence, at least half the vertices have $w(v)\le 2\nn(S)$.
Thus the length $\ell(S)$ of $\gamma$ is at most twice
$\sum_{t\le 2s\nn(S)}\binom{s}{t}$, the number
of vertices with weight at most $2\nn(S)$.
Together with the trivial bound $\ell(S)\le |G_S|=2^s$, this proves the first statement.

Newman~\cite{Newman_dens} showed that if $|S|=k$, then
$\nn(S)\le (\log k+1)/k=o(1)$; see also \refC{Cgreedy} and \refT{TZgreedy} below.
Using this, the second statement of the theorem follows from the first, noting
that $\sum_{t\le 2s(\log k+1)/k}\binom{s}{t}\le 2^{sf(k)}$ for some $f(k)$ that
tends to $0$ as $k\to\infty$.
\end{proof}


It seems likely that the bounds in \refT{Tperiod}
are far from the truth. For tilings of $\bbZ$, 
i.e., for sets $S$ with $\kk(S)=1$, much better bounds are known. Indeed,
Bir\'o~\cite{Biro} showed that $\ell(S)\le \exp(s^{1/3+o(1)})$ in this case.
This suggests the following question, where we expect the answer to the second
part to be positive.

\begin{question}\label{q1}
What is the approximate growth rate of the function
$\ell(s)=\max\{\ell(S):S\subseteq [0,s]\}$? In particular,
is $\ell(s)=2^{o(s)}$?
\end{question}

\begin{remark}\label{Ralg}
Using \refT{T:rat} it is easy to give an algorithm for calculating $\nn(S)$.
Of course, the upper bound $2^s$ on $\ell(S)$ gives a trivial algorithm,
of complexity roughly $2^{2^s}$; considering the graph $G_S$ allows
one to reduce this greatly.
Indeed, given a vertex $v_0$ of $G_S$ and a positive integer $T$, using
breadth-first search one can easily find inductively for all $t\le T$
and for all $v$ the minimum weight of a walk from $v_0$ to $v$ of length
$t$ (if there is one). Since a cycle is a walk of length at most $|G_S|$,
considering each starting vertex separately one can thus determine 
the minimum average weight $\alpha$ of a cycle
in time at most $O(|G_S|^3)=O(2^{3s})$. This is practical for moderate $s$,
say $s\le 10$. In fact, one can do a little better by considering a different 
graph $G_S'$: the definition is similar to that of $G_S$, except
that a vertex now encodes $B_m=(T_m+S-m)\cap [s]$, corresponding
to the subset of $[m+1,m+s]$ covered by $T_m+S$, rather than $A_m=(T_m-m+s)\cap [s]$.
It is easy to see that $A_m$ determines $B_m$, but many possible $A_m$ may
lead to the same $B_m$, so $G_S'$ has fewer vertices than $G_S$.

The problem of finding a cycle with minimum average edge weight
in a given weighted directed graph $G$ has received considerable attention:
it is usually referred to as the MCMP or `Minimum Cycle Mean Problem'.
Suppose that $G$ has $n$ vertices and $m$ edges. Karp~\cite{Karp78} gave
a very elegant and easy to program algorithm with running time $O(nm)$,
which in the present context is $O(|G_S|^2)$. (This algorithm is very unintuitive,
and the proof that it is correct requires a little work.) 
More complicated but more efficient algorithms have since been found;
perhaps the most efficient is that of Orlin and Ahuja~\cite{OrlinAhuja},
with running time $O(n^{1/2} m \log n)$ when, as here,
the edge weights are integers that are not too large.
Having found the minimum average edge weight, one can then find
the shortest cycle with this average weight by using Johnson's algorithm~\cite{Johnson}.
\end{remark}
\begin{table}[ht]
\small
\begin{equation*}
\begin{array}{cc}
\begin{tabular}{|c|c|l|}
 \hline
  $s$ & $\ell(s)$ & $S$\\
  \hline
  0 & 1 & $\{0\}$ \\
  1 & 2 & $\{0,1\}$ \\
  2 & 4 & $\{0,2\}$ \\
  3 & 5 & $\{0,1,3\}$ \\
  4 & 8 & $\{0,4\}$ \\
  5 & 8 & \\
  6 & 13 & $\{0,1,4,6\}$ \\
  7 & 13 & \\
  8 & 27 & $\{0,1,3,8\}$ \\
  9 & 27 & \\ 
  10 & 45 & $\{0,2,7,10\}$ \\
\hline
\end{tabular}
&
\begin{tabular}{|c|c|l|}
 \hline
  $s$ & $\ell(s)$ & $S$\\
  \hline
  11 & 53 & $\{0,1,7,9,11\}$ \\
  12 & 66 & $\{0,1,8,12\}$ \\
  13 & 109 & $\{0,1,3,7,12,13\}$ \\
  14 & 129 & $\{0,1,12,14\}$ \\
  15 & 147 & $\{0,1,8,12,15\}$ \\
  16 & 147 & \\
  17 & 170 & $\{0,3,7,17\}$ \\
  18 & 192 & $\{0,2,3,7,18\}$ \\
  19 & 250 & $\{0,1,3,7,11,17,19\}$ \\
  20 & 286 & $\{0,1,4,10,12,19,20\}$ \\
  21 & 317 & $\{0,1,2,7,12,20,21\}$ \\ 
\hline
\end{tabular}
\end{array}
\end{equation*}
\medskip
\caption{The maximum over $S\subseteq [0,s]\subset \bbZ$ of the
  minimum period of an optimum covering 
by translates of $S$, with examples of sets $S$ achieving the maximum. When not listed, $S$ is as in the row above.}\label{tab_t1}
\end{table}

Using, for example, Karp's algorithm and then Johnson's algorithm,
one can fairly easily search over all $S$ with diameter
$s\le 21$, say, to obtain the information in Table~\ref{tab_t1}. Here the third column gives
an example of a set $S$ attaining the maximum. To us, sadly, this table does not strongly suggest any particular asymptotic form of $\ell(s)$!

\begin{remark}
Writing $\ell(s,k)$ for $\max\{\ell(S): S\subseteq [0,s],\,|S|=k\}$,
in the light of \refT{Tperiod}, to show that $\ell(s)=2^{o(s)}$ it
suffices to show that $\ell(s,k)=2^{o(s)}$ for each fixed $k$.
However, while the numerical data in Table~\ref{tab_k34} strongly suggests this holds
when $k=3$, even for $k=4$ the data is much less clear!
For $k=3$, Schmidt and Tuller~\cite{SchmidtTullerI} have conjectured a simple
formula for $\nn(\{0,a,b\})$ with $0<a<b$ and $a$ and $b$ coprime. If this formula
is correct, then $\ell(s,3)\le 2s$ for all $s$.
\begin{table}[htb]
\small
\begin{equation*}
\begin{array}{l}
\begin{tabular}{|c||c|c|c|c|c|c|c|c|c|c|c|}
  \hline
$s$         & 3 & 4 & 5 & 6  & 7  & 8  & 9    & 10 & 11   & 12 & 13\\
 \hline
$\ell(s,3)$ & 5 & 5 & 8 & 11 & 11 & 11 & 17   & 17 & 20   & 23 & 23\\
\hline
$\ell(s,4)$ &   & 7 & 8 & 13 & 13 & 27 & (26) & 45 & (35) & 66 &(58)\\
\hline 
\end{tabular}
\\
\\
\begin{tabular}{|c||c|c|c|c|c|c|c|c|c|}
  \hline
$s$         & 14  & 15 & 16  & 17  & 18  & 19  & 20  & 21  & 22  \\
 \hline
$\ell(s,3)$ & 23  & 29 & 29  & 32  & 35  & 35  &(32) & 41  & 41 \\
\hline
$\ell(s,4)$ & 129 &(91)& 122 & 170 & (95)&(100)& 183 &(143)& 185\\
\hline 
\end{tabular}
\end{array}
\end{equation*}
\medskip
\caption{Longest repeating periods for optimal periodic coverings of sets with size
$k=3$ and $k=4$ contained in $[0,s]$, found using Karp's algorithm. Values in brackets
are the longest for such $S$ with $0,s\in S$; in these cases there is a subset $S\subseteq [0,s-1]$
with longer period.}\label{tab_k34}
\end{table}
\end{remark}

\begin{remark}\label{remarkvals}
Returning to $\ga_k$, the infimum of $\eff(S,\bbZ)$ over $S$ with $k$ elements, we do
not have an algorithm to calculate this. The problem is that we have no bound
on the diameter $\diam(S)=\max S-\min S$ of a set achieving (or approaching) the infimum.
Again, searching small examples strongly suggests that this maximum diameter does
not grow too fast with $k$; see Table~\ref{tab_gak}.
\begin{table}[htb]
\small
\begin{tabular}{|c|c|l|}
 \hline
  $k$ & $\ga_k$ & $S$ \\
  \hline
  2 & $1$ & $\{0,1\}$  \\
  3 & $5/6$ & $\{0,1,3\}$  \\
  4 & $3/4$? & $\{0,1,2,4\}$  \\
  5 & $11/15$? & $\{0,1,3,4,8\}$  \\
  6 & $2/3$? & $\{0,1,2,4,6,9\}$ \\
 \hline
 \end{tabular}
 \medskip
 \caption{Known and conjectured values of $\ga_k$ for small $k$, with examples
of corresponding sets $S$; the values
with question marks are upper bounds, given by the minimum over
all $k$-element subsets $S$ of $[0,22]$.}\label{tab_gak}
\end{table}

\begin{question}\label{qvals}
Are the values in Table~\ref{tab_gak} correct? In particular, is $\ga_4$ equal to $3/4$,
as conjectured by Newman (see Weinstein~\cite{Weinstein})?
\end{question}
\end{remark}

Although it is not our focus here, let us remark that the question 
of which subsets $S$ \emph{tile} $\bbZ$ has received
much attention. Formally, $S$ tiles $\bbZ$ if there exists
a set $T$ such that every integer has a unique expression
as $t+s$, $t\in T$, $s\in S$. In the light of \refT{T:rat},
this is equivalent to $\kk(S)=1$. When $|S|$ is a prime power,
Newman~\cite{Newman_tess} gave a simple necessary and sufficient
condition
for $S$ to tile $\bbZ$; Coven and Meyerowitz~\cite{CovenMeyerowitz} answered the question
when $|S|$ has two prime factors. In the general case, there are only
partial results; see, for example, Konyagin and {\L}aba~\cite{KL}.

\section{Linear vs.\ cyclic}\label{Slc}

If $S\subset\bbZ$ is finite and $n$ is sufficiently large, say $n>\diam(S)$,
then we can regard $S$ as a subset of $\bbZ_n=\bbZ/n\bbZ$.
Clearly, if $T\subseteq\bbZ$ is a set such that $T+S\supseteq[n]$, and
$\hT$ is the image of $T$ in $\bbZ_n$, then $\hT+S=\bbZ_n$ and
$|\hT|\le|T|$. Thus
\begin{equation}\label{l1}
\nn(S,\bbzn)\le\nn(S,n).
\end{equation}
Equality does not necessarily hold, 
see \refE{El0} below,
but the difference is negligible for $S$ fixed and $n$ large, as shown by the
following lemma.

\begin{lemma}\label{LZn}
Let $S\subset\bbZ$ be a finite set and $n>\diam(S)$.
 Then 
\begin{equation}
  \label{l2a}
\nn(S,\bbZ)
\le
\frac{\nn(S,\bbzn)}{n}
\le
\frac{\nn(S,n)}{n}
\end{equation}
and
\begin{equation}
  \label{l2b}
\kk(S,\bbZ)
\le
{\kk(S,\bbzn)}
\le
\kk(S,n).
\end{equation}
More precisely, for any $m\ge1$, with $d=\diam(S)$,
\begin{equation}
  \label{l1b}
\nn(S,mn-d)\le m\nn(S,\bbzn).
\end{equation}
Furthermore, for any finite $S\subset\bbZ$,
\begin{align}
  \label{l2c}
\limn
\frac{\nn(S,\bbzn)}{n}
&
=\nn(S,\bbZ)
=
\inf_{n\ge1} \frac{\nn(S,\bbzn)}{n},
\\
  \label{l2d}
\limn {\kk(S,\bbzn)}
&=
\kk(S,\bbZ)
=
\inf_{n\ge1}
\kk(S,\bbzn),
\\
  \label{l2e}
\limn {\eff(S,\bbzn)}
&=
\eff(S,\bbZ)
=
\sup_{n\ge1}
\eff(S,\bbzn)
.
\end{align}
 \end{lemma}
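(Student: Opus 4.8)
The plan is to establish the single sharp bound \eqref{l1b} first and to read everything else off from it, from the definitions, and from a short squeeze argument. Throughout I may translate $S$ so that $S\subseteq[0,d]$ with $0,d\in S$, where $d=\diam(S)$; this changes none of the quantities involved.

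The main device is a \emph{periodic lift} of an optimal cyclic covering. Fix $n\ge1$ and choose $\hat T\subseteq\bbzn$ with $\hat T+S=\bbzn$ (reading $S$ mod $n$; when $n\le d$ this means the image of $S$ in $\bbzn$) and $|\hat T|=\nn(S,\bbzn)$. Lifting each element of $\hat T$ to its representative in $\{0,1,\dots,n-1\}$ gives a set $T_0$ with $|T_0|=\nn(S,\bbzn)$, and I put $T=T_0+n\bbZ$. Reducing mod $n$ shows $T+S=\bbZ$: for any $x\in\bbZ$ there are $t_0\in T_0$ and $s\in S$ with $t_0+s\equiv x\pmod n$, so $x=t_0+s+jn\in T+S$ for a suitable $j$. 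Thus $\cT=\{t+S:t\in T\}$ is a periodic covering of $\bbZ$ of period $n$ with density $|T_0|/n=\nn(S,\bbzn)/n$, so $\nn(S,\bbZ)\le\nn(\cT)=\nn(S,\bbzn)/n$ for \emph{every} $n\ge1$.

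Next I would prove \eqref{l1b} by using only a bounded part of this periodic covering. Take $n>d$ and $m\ge1$. Since $S\subseteq[0,d]$, covering a point $x$ requires a base point $t\in[x-d,x]$, so $T':=T\cap[1-d,\,mn-d]$ already covers $[mn-d]=\{1,\dots,mn-d\}$. Now $[1-d,mn-d]$ consists of exactly $mn$ consecutive integers, while $T$ is a union of $|T_0|$ complete residue classes mod $n$, so any block of $mn$ consecutive integers meets it in exactly $m|T_0|$ points; hence $|T'|=m|T_0|=m\,\nn(S,\bbzn)$ and therefore $\nn(S,mn-d)\le|T'|=m\,\nn(S,\bbzn)$, which is \eqref{l1b}.

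Finally I would harvest the remaining statements. Dividing \eqref{l1b} by $mn-d$ and letting $m\to\infty$, using $\nn(S,\bbZ)=\lim_N\nn(S,N)/N$ from \eqref{tz}, gives the left inequality of \eqref{l2a}; the right inequality is \eqref{l1} divided by $n$. Multiplying \eqref{l2a} through by $|S|$ gives \eqref{l2b}, since $\kk(S,\bbZ)=\nn(S,\bbZ)|S|$, $\kk(S,\bbzn)=\nn(S,\bbzn)|S|/n$ and $\kk(S,n)=\nn(S,n)|S|/n$. For \eqref{l2c}: the periodic lift gives $\nn(S,\bbZ)\le\nn(S,\bbzn)/n$ for all $n$, hence $\nn(S,\bbZ)\le\inf_{n\ge1}\nn(S,\bbzn)/n\le\liminf_n\nn(S,\bbzn)/n$, while \eqref{l1} gives $\nn(S,\bbzn)/n\le\nn(S,n)/n\to\nn(S,\bbZ)$, so $\limsup_n\nn(S,\bbzn)/n\le\nn(S,\bbZ)$; this squeezes both the infimum and the limit to $\nn(S,\bbZ)$. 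Then \eqref{l2d} follows by multiplying \eqref{l2c} by $|S|$, and \eqref{l2e} by taking reciprocals, all the quantities being finite and positive. The only step needing any thought is \eqref{l1b}, specifically the two observations that $T\cap[1-d,N]$ already covers $[N]$ and that a block of exactly $mn$ integers meets the period-$n$ set $T$ in exactly $m|T_0|$ points; the loss of $d$ in \eqref{l1b} is precisely the boundary effect from $S$ having diameter $d$. Everything else is bookkeeping with the definitions and the already-established \eqref{l1}.
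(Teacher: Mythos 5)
Your proposal is correct and follows essentially the same route as the paper: lift an optimal covering of $\bbzn$ to a periodic covering of $\bbZ$, truncate it to get \eqref{l1b} (the paper writes the truncation as $T+\set{jn:0\le j<m}$ covering $[d,mn-1]$ rather than intersecting the full periodic set with $[1-d,mn-d]$, but this is the same count), and then deduce \eqref{l2a}--\eqref{l2e} by dividing, multiplying by $|S|$, taking reciprocals, and squeezing against $\nn(S,n)/n\to\nn(S,\bbZ)$. No substantive difference.
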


\begin{proof}
Let $S$ be a finite subset of $\bbZ$ with diameter $d$. By translating
$S$, which does not change any of the numbers studied
here, we may and shall assume that $S\subseteq[0,d]$.
For $n>d$, regarding $S$ as a subset of $\bbZ_n$ as above,
let $\{\hat t+S:\hat t\in \hT\}$ be an optimal covering of $\bbZ_n$ by translates
of $S$. Choosing a representative $t\in [0,n-1]\subset \bbZ$ for each 
$\hat t\in \hT$, we obtain
a set $T\subseteq[0,n-1]\subset \bbZ$ with $|T|=|\hT|=\nn(S,\bbzn)$ such
that $T+S$ contains a representative of every residue class modulo $n$.
Since $S\subseteq[0,d]$, we have $T+S\subseteq[0,n+d-1]$.
Since $n>d$, this interval contains at most two representatives
of each residue class modulo $n$, and it follows that
$T+S\supseteq[d,n-1]$, and that if $x\in[0,d-1]$, 
then $x\in T+S$ or $x+n\in T+S$ (or both).
For $m\ge1$, let $T_m\=T+\set{jn:0\le j<m}$;
it follows then that 
$T_m+S\supseteq[d,mn-1]$. 
Consequently,
\begin{equation}
\nn(S,mn-d)
\le |T_m|
=m|T|
= m\nn(S,\bbzn),
\end{equation}
which proves \eqref{l1b}.

Dividing \eqref{l1b} by $mn-d$ and letting $m\to\infty$ we obtain the
first inequality in \eqref{l2a}; the second follows by \eqref{l1}.
Next, \eqref{l2a} implies \eqref{l2c} by \eqref{tz} and \eqref{tzinf}.
The remaining statements then follow from the definitions: 
multiplying \eqref{l2a} and \eqref{l2c} by $|S|$ we obtain \eqref{l2b}
and \eqref{l2d} by
\eqref{ksg}, \eqref{kzn}, \eqref{kz}.
Finally, \eqref{l2e} follows by taking reciprocals.
\end{proof}

 \begin{example}  \label{El0}
If $S=\set{0,1,5}$, then $\nn(S,\bbZ_6)=2$ because $S\cup(S+3)=\bbZ_6$
(see also \refE{Eresidue} below), but it is impossible to cover the
interval $[0,5]\subset\bbZ$ by two translates of $S$ and thus
$\nn(S,6)>2$.
(In fact, it is easily seen that $\nn(S,6)=3$.)
 \end{example}

\begin{example}\label{Eresidue}
If $S\subset\bbZ$ contains exactly one element from each
residue class modulo $k\=|S|$, then,
for every $m\ge1$, working in $\bbZ_{mk}$ we have
$\set{jk:0\le j<m}+S=\bbZ_{mk}$ and thus
$\nn(S,\bbZ_{mk})=m=|\bbZ_{mk}|/|S|$ 
and $\kk(S,\bbZ_{mk})=1$.
Consequently, by \refL{LZn},
$\nn(S)=1/|S|$ and $\kk(S)=\eff(S)=1$.	
  \end{example}

Our next result slightly improves Newman's result~\cite{Newman_dens}
for coverings of the natural numbers.

\begin{theorem}\label{TZgreedy}
  If $S\subseteq\bbZ$ is finite with $|S|=k\ge1$, then
\begin{align}
  \kk(S)&\le H_k\le \log k+1,
\\
  \eff(S)&\ge \frac1{H_k}\ge \frac1{\log k+1}.
\end{align}
In other words, $\ga_k$, the minimal efficiency of a $k$-set, is at least $1/H_k$.
\end{theorem}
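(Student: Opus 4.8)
The plan is to reduce the statement about $\bbZ$ to the finite-group bound already established in \refC{Cgreedy}, and then transfer it using \refL{LZn}.

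First, fix a finite $S\subseteq\bbZ$ with $|S|=k$ and choose any integer $n>\diam(S)$. Then the reduction map $\bbZ\to\bbzn$ is injective on $S$, so $S$ may be regarded as a $k$-element subset of the cyclic group $\bbzn$. Applying \refC{Cgreedy} with $G=\bbzn$ gives $\kk(S,\bbzn)\le H_k\le\log k+1$.

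Next, invoke \refL{LZn}: by \eqref{l2b} (equivalently, by passing to the limit in \eqref{l2d}) we have $\kk(S,\bbZ)\le\kk(S,\bbzn)$ whenever $n>\diam(S)$, hence $\kk(S,\bbZ)\le H_k\le\log k+1$. Taking reciprocals via the definition \eqref{ez} yields $\eff(S)=1/\kk(S)\ge 1/H_k\ge 1/(\log k+1)$. Since this holds for every $k$-element set $S\subseteq\bbZ$, taking the infimum over such $S$ in \eqref{ga} gives $\ga_k\ge 1/H_k$.

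There is essentially no obstacle here; the theorem is a direct corollary of \refC{Cgreedy} and \refL{LZn}. The only point needing a moment's care is that $S$ genuinely retains $k$ elements after reduction mod $n$, which is precisely why we require $n>\diam(S)$ (or simply let $n\to\infty$ via \eqref{l2d}, which renders the choice of $n$ immaterial beyond being large). Alternatively one could give a self-contained argument by running the greedy procedure of \refT{Tgreedy1} directly on the intervals $[n]$ and letting $n\to\infty$, but routing through the cyclic case is cleaner and avoids re-deriving the harmonic-sum estimate.
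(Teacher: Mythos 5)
Your argument is correct and is exactly the paper's proof: the authors also deduce the theorem immediately from \eqref{g2fk} (the bound $\kk(S,\bbzn)\le H_k$ of \refC{Cgreedy}) together with \eqref{l2d} of \refL{LZn}. Your additional remarks about $n>\diam(S)$ are fine but not needed beyond what the lemma already provides.
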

\begin{proof}
  An immediate consequence of \eqref{l2d} and \eqref{g2fk}.
\end{proof}

\refL{LZn} shows that $\bbZ$ is `easier' to cover efficiently
than $\bbZ_n$. The next lemma shows that the difference is not that large.

\begin{lemma}\label{LZn2}
Let $S\subset \bbZ$ have diameter $d< \infty$. Then for any $n>d$ we have
\begin{equation*}
 \kk(S,\bbZ_n) \le \frac{n+d}{n} \kk(S,\bbZ) < 2\kk(S,\bbZ).
\end{equation*}
\end{lemma}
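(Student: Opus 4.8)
The plan is to reduce everything to the single inequality $\nn(S,\bbzn)\le(n+d)\,\nn(S,\bbZ)$. Indeed, multiplying by $|S|/n$ and using $\kk(S,\bbzn)=\nn(S,\bbzn)|S|/n$ together with $\kk(S,\bbZ)=\nn(S,\bbZ)|S|$ gives $\kk(S,\bbzn)\le\frac{n+d}{n}\kk(S,\bbZ)$, and the strict bound $\frac{n+d}{n}\kk(S,\bbZ)<2\kk(S,\bbZ)$ then follows because $n>d$ forces $\frac{n+d}{n}<2$ (and $\kk(S,\bbZ)\ge1>0$). After translating $S$ --- which affects none of the quantities involved --- I may assume $S\subseteq[0,d]$; the case $d=0$, that is $|S|=1$, is trivial and I set it aside.

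For the key inequality I would use an optimal \emph{periodic} covering $\cT=\{t+S:t\in T\}$ of $\bbZ$, which exists by \refT{T:rat}; say it has period $\ell$, so that $T$ is a union of $p=\ell\,\nn(S,\bbZ)$ residue classes modulo $\ell$ and hence meets every block of $\ell$ consecutive integers in exactly $p$ points. Writing $L=n+d$ and swapping the order of summation,
\[
\sum_{a=0}^{\ell-1}\bigl|T\cap[a,a+L-1]\bigr|=\sum_{j=0}^{L-1}\bigl|T\cap[j,j+\ell-1]\bigr|=pL,
\]
so some window $I=[a,a+n+d-1]$ satisfies $|T\cap I|\le pL/\ell=(n+d)\,\nn(S,\bbZ)$. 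I then check that the image $\widehat{T\cap I}$ of $T\cap I$ in $\bbzn$ already covers $\bbzn$: the last $n$ integers $I'=[a+d,a+n+d-1]$ of $I$ form a complete residue system modulo $n$, and for $x\in I'$ any representation $x=t+s$ with $t\in T$, $s\in S\subseteq[0,d]$ forces $t=x-s\in[x-d,x]\subseteq I$, so $x\in(T\cap I)+S$. Reducing modulo $n$ we get $\widehat{T\cap I}+S=\bbzn$, whence $\nn(S,\bbzn)\le|\widehat{T\cap I}|\le|T\cap I|\le(n+d)\,\nn(S,\bbZ)$.

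The only point needing care is keeping the boundary loss down to exactly $d\,\nn(S,\bbZ)$. The naive route --- cover an interval of length $n+d$ in $\bbZ$ optimally and wrap it around $\bbzn$ --- introduces an excess term depending on the period $\ell(S)$ rather than on $d$ (and \refT{Tperiod} only bounds $\ell(S)$ by $2^s$), so it is useless here. Using a periodic covering is exactly what removes this: since $T$ meets every length-$\ell$ block in \emph{precisely} $p$ points, the averaging identity above is an equality, so choosing a window at or below the average carries no excess beyond the $d\,\nn(S,\bbZ)$ forced by the width of $S$. The remaining ingredients --- the degenerate case, the passage between $\nn$ and $\kk$, and the final strict inequality --- are all routine.
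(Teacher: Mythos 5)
Your proof is correct and follows essentially the same route as the paper's: take an optimal periodic covering of $\bbZ$ (via \refT{T:rat}), average over windows of length $n+d$ to find one meeting $T$ in at most $(n+d)\nn(S,\bbZ)$ points, note that these translates already cover a length-$n$ subinterval, and reduce modulo $n$. The paper merely states the averaging step without writing out the double-counting identity that you make explicit.
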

\begin{proof}
Without loss of generality, suppose that $\min S=0$, so $\max S=d$.
Let $\cT=\{t+S: t\in T\}$ be a covering with $\kk(\cT)=\kk(s,\bbZ)$.
By \refT{T:rat} such a $\cT$ exists, and we may assume that $\cT$ is periodic.
(We do not need this here, and could just as well work with an optimal
covering of $[m]$ for $m$ large.)

For $a\in \bbZ$ let $I_a=[a,a+n-1]$, an interval of length $n$.
Then $t+S$ meets $I_a$ if and only if $t\in J_a=[a-d,a+n-1]$, an interval
of length $n+d$. Hence, $T_a+S\supset I_a$, where $T_a=T\cap J_a$.
Since $\cT$ (which is periodic) has density $\nn(S,\bbZ)=\kk(S,\bbZ)/|S|$,
a simple averaging argument shows that there is some $a$
with $|T_a|\le |J_a|\kk(S,\bbZ)/|S| = (n+d)\kk(S,\bbZ)/|S|$.
In $\bbZ_n$ we have $\hT_a+S\supset \widehat{I_a}=\bbZ_n$,
so $\nn(S,\bbZ_n)\le (n+d)\kk(S,\bbZ)/|S|$ and the result follows.
\end{proof}

Combined with \refT{unbal}, \refL{LZn2} gives a new
proof of Newman's result~\cite{Newman_dens} concerning the asymptotic value
of $\ga_k$, the minimal efficiency of a $k$-set.
\begin{theorem}\label{Ta}
We have $\ga_k\sim 1/\log k$ as $k\to\infty$.
\end{theorem}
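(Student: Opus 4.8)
The plan is to pair the lower bound $\ga_k\ge 1/H_k$ from \refT{TZgreedy} with a matching upper bound obtained by running the cyclic construction of \refT{unbal} on a \emph{short} interval and then transferring the resulting set to $\bbZ$ via \refL{LZn2}. Since $H_k\le\log k+1$, the lower bound already gives $\ga_k\log k\ge\log k/(\log k+1)$, so $\liminf_{k\to\infty}\ga_k\log k\ge1$; everything below is aimed at showing $\limsup_{k\to\infty}\ga_k\log k\le1$.

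Fix $0<\delta<1$. I would take $n=n(k)=\floor{k^{1+\delta/7}}$ and $h=h(k)=\floor{k^{1+\delta/14}}$, so that for large $k$ we have $n\ge h$, $h/k\to\infty$, $n\le k^{1+\delta/7}$, and, crucially, $h/n\to0$. Then I would apply \refT{unbal} with $G=\bbzn$ and $H=\{0,1,\dots,h-1\}$ viewed as a subset of $\bbzn$ of size $h$: for $k$ large, a fraction $1-o(1)$ of the $k$-element subsets $S$ of $H$ satisfy $\kk(S,\bbzn)\ge(1-\delta)\log k$, so such an $S$ exists; fix one. Regarded as a subset of $\bbZ$, this $S$ lies in $[0,h-1]$, so its diameter $d=\diam(S)$ satisfies $d<h$, whence $d/n\to0$ and in particular $n>d$.

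Now \refL{LZn2} applies and yields $\kk(S,\bbzn)\le\tfrac{n+d}{n}\kk(S,\bbZ)$, that is, $\kk(S,\bbZ)\ge\tfrac{n}{n+d}\kk(S,\bbzn)\ge\tfrac{n}{n+d}(1-\delta)\log k$. Since $d/n\to0$, for $k$ large $\tfrac{n}{n+d}\ge1-\delta$, so $\kk(S,\bbZ)\ge(1-\delta)^2\log k\ge(1-2\delta)\log k$, and therefore $\ga_k\le\eff(S,\bbZ)=1/\kk(S,\bbZ)\le\bigl((1-2\delta)\log k\bigr)^{-1}$ for all sufficiently large $k$. As $\delta\in(0,1)$ was arbitrary (each $\delta$ coming with its own threshold on $k$), this gives $\limsup_{k\to\infty}\ga_k\log k\le1$, and combined with the lower bound, $\ga_k\log k\to1$, i.e.\ $\ga_k\sim1/\log k$.

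The only step that needs care — and the reason \refT{unbal} rather than \refT{finite_bad} is invoked — is that \refL{LZn2} bounds $\kk(S,\bbZ)$ from below only by $\tfrac{n}{n+d}\kk(S,\bbzn)$, so the bad $k$-set must have diameter small relative to the ambient modulus $n$; a random $k$-subset of all of $\bbzn$ would have diameter about $n$ and lose a constant factor in the transfer. Taking $H$ to be an interval of length $h=\floor{k^{1+\delta/14}}$, which is $o(n)$ yet still satisfies $h/k\to\infty$ as \refT{unbal} requires, is precisely what makes this transfer lossless in the limit. Everything else is routine bookkeeping with inequalities already established, so I do not anticipate any further obstacle.
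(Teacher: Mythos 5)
Your proof is correct and follows essentially the same route as the paper's own: the lower bound from \refT{TZgreedy}, and for the upper bound an application of \refT{unbal} to a $k$-subset of a short interval $H\subset\bbzn$ with $h/n\to0$, transferred to $\bbZ$ via \refL{LZn2}. The only difference is the specific (equally valid) choice of $n(k)$ and $h(k)$ — the paper takes $h=\ceil{k\log k}$ and $n=\ceil{k(\log k)^2}$ rather than your powers of $k$.
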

\begin{proof}
In the light of \refT{TZgreedy}, we need only prove that
$\ga_k\le (1+o(1))/\log k$.
Given $\delta>0$, set $h=h(k)=\ceil{k\log k}$ and $n=n(k)=\ceil{k(\log k)^2}$, say.
Then the assumptions of \refT{unbal} are satisfied. Applying this result
to $[h]\subset \bbZ_n$, it follows that for $k$ large enough,
there is some $S\subset [h]$ with $|S|=k$
such that $\kk(S,\bbZ_n)\ge (1-\delta)\log k$.
Since $\diam(S)\le h$ and $h/n\to 0$, from \refL{LZn2} it follows that
\begin{equation*}
 \kk(S,\bbZ) \ge \kk(S,\bbZ_n)\frac{n}{n+h} \ge (1-\delta-o(1))\log k.
\end{equation*}
Since $\delta>0$ is arbitrary, it follows that
$\sup\{\kk(S,\bbZ):|S|=k\} \ge (1-o(1))\log k$ as $k\to\infty$.
In other words, $\ga_k\le (1+o(1))/\log k$.
\end{proof}

\section{Small sets in $\bbZ$ and $\bbZ_n$}\label{SZsmall}

In this section we shall consider small sets $S$ in either 
$\bbZ$ or $\bbZ_n$ ($n$ large), 
with the aim of showing that for `typical' sets $S$ of some
fixed size $k$, the efficiency is close to 1.
Given an integer $x\in\bbZ$, we sometimes write $x$ for the corresponding
element in $\bbzn$, but in this section
for clarity we usually 
denote the latter by $\q x$. Thus $\q x=\q y \iff x\equiv y \pmod n$.

Schmidt~\cite{Schmidt} proved results relating
the \cd\ of $S$ to the linear equations satisfied 
by its elements, showing that if the elements satisfy no non-trivial
equations with small coefficients, then $S$ covers efficiently. 
Restated in our terminology, these results are as follows.
\begin{theorem}[\cite{Schmidt}]
  \label{TJ1}
For every $k\ge2$ and $\eps>0$ there exists a constant
$C=C(k,\eps)$ such that
for every $n\ge k$, if $S=\set{\q x_1,\dots,\q x_k}$ is a $k$-subset of
the cyclic group $\bbzn$ with $\kk(S,\bbzn)\ge 1+\eps$, then
there exist integer coefficients $a_1,\dots,a_k$, with
$\sum_{i=1}^k a_i=0$ and $0<\max_i |a_i|\le C$, such that 
\begin{equation}\label{TJ1c}
  \sum_{i=1}^k a_i\q x_i = 
\sum_{i=1}^{k-1} a_i(\q x_i-\q x_k) = 0
\end{equation}
in $\bbzn$.
\end{theorem}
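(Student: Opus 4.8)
The plan is to prove the contrapositive by the geometry of numbers: if the elements of $S$ satisfy no non-trivial relation $\sum_{i=1}^k a_i\bar x_i=0$ with $\sum a_i=0$ and $\max_i|a_i|\le C$, then one can exhibit a covering of $\bbzn$ by translates of $S$ efficient enough that $\kk(S,\bbzn)<1+\eps$, once $C=C(k,\eps)$ is large. First I would make two harmless reductions. Translating $S$ alters neither $\kk$ nor the validity of \eqref{TJ1c}, so assume $\bar x_k=0$. Next, writing $H=\langle\bar x_1,\dots,\bar x_{k-1}\rangle$ and using the elementary identity $\kk(S,\bbzn)=\kk(S,H)$ --- a covering of $\bbzn$ breaks up into independent coverings of the $[\bbzn:H]$ cosets of $H$, each being a covering of the cyclic group $H$ by translates of $S\subseteq H$ --- assume the $\bar x_i$ generate $\bbzn$, so that $\phi\colon\bbZ^{k-1}\to\bbzn$, $e_i\mapsto\bar x_i$, is onto. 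Put $\Lambda=\ker\phi$, a sublattice of $\bbZ^{k-1}$ of covolume $n$. A non-zero vector $(a_1,\dots,a_{k-1})\in\Lambda$ of $\ell^\infty$-length at most $C/(k-1)$ yields, on setting $a_k=-\sum_{i<k}a_i$, a relation of the required type, so it suffices to prove: if the shortest non-zero vector of $\Lambda$ has $\ell^\infty$-length exceeding a suitable $C'=C'(k,\eps)$, then $\kk(S,\bbzn)<1+\eps$.

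The construction rests on the fact that the ``corner simplex'' $S'=\{0,e_1,\dots,e_{k-1}\}\subset\bbZ^{k-1}$ tiles $\bbZ^{k-1}$: with $L=\{v\in\bbZ^{k-1}:\sum_i i\,v_i\equiv0\pmod k\}$, an index-$k$ sublattice, $S'$ is a transversal of $L$, so $L+S'=\bbZ^{k-1}$; note $\phi(S')=S$. Take a Minkowski-reduced basis $b_1,\dots,b_{k-1}$ of $\Lambda$, let $P=\{\sum t_ib_i:0\le t_i<1\}$ be the associated fundamental parallelepiped, and set $D=P\cap\bbZ^{k-1}$. Since $P-P$ contains no non-zero vector of $\Lambda$, the map $\phi$ is injective on $D$; and $D$ is a transversal of $\Lambda$ in $\bbZ^{k-1}$, so $|D|=n$. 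For a $\Lambda$-coset whose representative $v_0\in D$ lies at $\ell^\infty$-distance more than $1$ from $\partial P$, the decomposition $v_0=\ell+s'$ $(\ell\in L,\ s'\in S',\ \|s'\|_\infty\le1)$ coming from the tiling has $\ell\in P\cap\bbZ^{k-1}=D$, so $v_0\in(L\cap D)+S'$. Hence $(L\cap D)+S'$ meets every $\Lambda$-coset except those represented in the width-$1$ shell about $\partial P$; to each such exceptional coset I add its representative to the covering set (legitimate since $0\in S'$). Applying $\phi$, injective on $D$, produces $T\subset\bbzn$ with $T+S=\bbzn$ and
\[ |T|\ \le\ |L\cap P|\ +\ \bigl|\bbZ^{k-1}\cap(\text{width-}1\text{ shell about }\partial P)\bigr|. \]

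It then remains to bound the two terms uniformly in $n$. As $\det L=k$, a standard lattice-point count gives $|L\cap P|=\operatorname{vol}(P)/k+O_k(\operatorname{Surf}(P))=n/k+O_k(\operatorname{Surf}(P))$, and the shell count is likewise $O_k(\operatorname{Surf}(P))$ (all implied constants depending only on $k$). Since the basis is reduced, $\prod_i\|b_i\|\le\gamma_{k-1}\operatorname{vol}(P)=\gamma_{k-1}n$, and each facet of $P$ has $(k-2)$-dimensional volume at most $\prod_{j\ne i}\|b_j\|\le\gamma_{k-1}n/\|b_i\|\le\gamma_{k-1}n/\lambda$, where $\lambda$ denotes the $\ell^\infty$-length of a shortest non-zero vector of $\Lambda$; hence $\operatorname{Surf}(P)\le 2(k-1)\gamma_{k-1}\,n/\lambda$. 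Combining, $|T|\le(n/k)\bigl(1+O_k(1/\lambda)\bigr)$, so $\kk(S,\bbzn)=k|T|/n\le1+O_k(1/\lambda)<1+\eps$ whenever $\lambda>C'(k,\eps)$ with $C'$ chosen large enough; unwinding the reductions gives the theorem with $C(k,\eps)=(k-1)C'(k,\eps)$.

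The part I expect to be the main obstacle is making these two lattice-point estimates clean and genuinely uniform in $n$ --- above all, controlling the surface area of the fundamental domain, which is precisely why one passes to a \emph{reduced} basis of $\Lambda$ rather than an arbitrary one --- together with tracking the dependence of all implied constants on $k$, so that they are absorbed once $C$ is taken large compared with $k$ and $1/\eps$; the bookkeeping in the two initial reductions is a minor further nuisance. (One could instead follow Schmidt's original argument, but the route above seems the most self-contained.)
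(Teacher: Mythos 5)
Your argument is correct in outline, and it is a genuinely different route from the one in the paper. The paper proves Theorem~\ref{TJ1} via Fourier analysis on $\bbzn$: Lemma~\ref{LJ1strong} (a discrete Kronecker-type theorem, proved by bounding $\ff{\chi_I}$ and the exponential sum over units) shows that if no small relation holds, then some dilate $zS$ with $(z,n)=1$ lies within $\gd n$ of the equally spaced set $\{\ceil{in/k}\}$, which visibly covers with multiplicity $<1+\eps$; since $\qx\mapsto z\qx$ is an automorphism, $\kk(S,\bbzn)=\kk(zS,\bbzn)$. You instead lift to $\bbZ^{k-1}$, encode the relations as the lattice $\Lambda=\ker\phi$, push the tiling of $\bbZ^{k-1}$ by the corner simplex $S'$ down through a fundamental parallelepiped $P$ of $\Lambda$, and control the error by lattice-point counting; your reductions (translating so $\qx_k=0$, passing to the subgroup generated by the $\qx_i$, converting a short vector of $\Lambda$ into a relation with $\sum a_i=0$) and the covering construction itself all check out. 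What each approach buys: yours is elementary and produces an explicit near-optimal covering, and it avoids the unit-dilation issue entirely (the paper needs the stronger $(z,n)=1$ version of its lemma precisely to handle composite $n$); the paper's approach yields the discrete Kronecker lemma as a by-product and sidesteps all geometry-of-numbers bookkeeping. On the one step you defer: the estimates $|L\cap P|=\vol(P)/k+O_k(\Surf(P))$ and the shell count do hold, but you should make explicit that they rely on $P$ being \emph{fat} — every facet-normal width of $P$ is $\vol(P)/\vol(F_i)\ge\|b_i\|/c_k\ge\lambda/c_k$ by the reduced-basis inequality, so the inradius of $P$ is $\gg_k\lambda$, and only then is the volume of an $O_k(1)$-neighbourhood of $\partial P$ really $O_k(\Surf(P))$ rather than involving lower-order quermassintegrals. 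This is the same place the reduced basis is used for the surface-area bound, and it is exactly where the hypothesis $\lambda>C'$ enters twice; with that spelled out, the proof is complete.
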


\begin{theorem}[\cite{Schmidt}] \label{TJ1Z}
For every $k\ge2$ and $\eps>0$ there exists a constant
$C=C(k,\eps)$ such that
if $S=\set{x_1,\dots,x_k}$ is a $k$-subset of
$\bbZ$ with $\kk(S,\bbZ)\ge 1+\eps$, then
there exist integer coefficients $a_1,\dots,a_k$ with
$\sum_{i=1}^k a_i=0$ and $0<\max_i|a_i|\le C$, such that 
\begin{equation*}
  \sum_{i=1}^k a_i x_i = 0.
\end{equation*}
\end{theorem}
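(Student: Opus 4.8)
The plan is to reduce \refT{TJ1Z} to the cyclic statement \refT{TJ1}, using the comparison between coverings of $\bbZ$ and of $\bbzn$ supplied by \refL{LZn}. Fix $S=\set{x_1,\dots,x_k}$ with $\kk(S,\bbZ)\ge 1+\eps$ and set $d=\diam(S)$. For every $n>d$ we may regard $S$ as a $k$-subset of $\bbzn$, and by \eqref{l2b} of \refL{LZn} we have $\kk(S,\bbzn)\ge\kk(S,\bbZ)\ge 1+\eps$. Hence \refT{TJ1} applies to $S\subseteq\bbzn$ for every $n>d$, producing integers $a^{(n)}_1,\dots,a^{(n)}_k$ with $\sum_{i=1}^k a^{(n)}_i=0$, $0<\max_i|a^{(n)}_i|\le C$, and $\sum_{i=1}^k a^{(n)}_i x_i\equiv 0\pmod n$. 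Here $C=C(k,\eps)$ is the constant furnished by \refT{TJ1}; the essential feature is that it does not depend on $n$.

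Next I would run a pigeonhole argument over $n$. The vectors $(a^{(n)}_1,\dots,a^{(n)}_k)$ all lie in the finite set of integer $k$-tuples with entries of absolute value at most $C$, so some fixed tuple $(a_1,\dots,a_k)$ equals $(a^{(n)}_1,\dots,a^{(n)}_k)$ for an unbounded set of values of $n$. This tuple inherits $\sum_i a_i=0$ and $0<\max_i|a_i|\le C$, and the single integer $\sum_{i=1}^k a_i x_i$ is divisible by arbitrarily large positive integers $n$, hence equals $0$. That is precisely the conclusion of \refT{TJ1Z}.

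I do not expect a genuine obstacle: the only subtle points are the uniformity of $C=C(k,\eps)$ in \refT{TJ1} (its independence of $n$), which is exactly what legitimises the pigeonhole step, and the fact that \refL{LZn} gives the inequality in the direction $\kk(S,\bbzn)\ge\kk(S,\bbZ)$, so that the hypothesis transfers to all sufficiently large cyclic groups rather than the other way around.
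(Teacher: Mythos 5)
Your reduction is correct and is essentially the paper's own proof: both transfer the hypothesis to $\bbZ_n$ via \eqref{l2b} and then invoke \refT{TJ1}, whose constant $C(k,\eps)$ is independent of $n$. The only difference is in the last step, and it is cosmetic: the paper applies \refT{TJ1} just once, with a single $n>C\sum_{i=1}^k|x_i|$, so that the congruence $\sum_{i=1}^k a_ix_i\equiv 0\pmod n$ immediately forces $\sum_{i=1}^k a_ix_i=0$, whereas you pigeonhole a fixed coefficient vector over infinitely many $n$; both arguments are valid.
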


Although Schmidt proved his results above in a different order, it is
easy to see that \refT{TJ1} implies \refT{TJ1Z}.

\begin{proof}
Let $C=C(k,\eps)$ be as in \refT{TJ1}.
Given $S=\set{x_1,\dots,x_k}\subset\bbZ$, choose any integer $n>C\sumik|x_i|$
and let $\q S=\set{\qx_1,\dots,\qx_k}\subseteq\bbZ_n$. 
Then $\kk(\q S,\bbzn)\ge \kk(S,\bbZ)$ by \eqref{l2b}.
Hence \refT{TJ1} gives  $\sumik a_ix_i\equiv0\pmod n$ for some $a_i$
with $\sumik a_i=0$ and $0<\max_i |a_i|\le C$.
From the choice of $n$ it follows that $\sumik a_ix_i=0$.
\end{proof}

Theorems \ref{TJ1} and \ref{TJ1Z} easily imply that `typical' sets 
cover efficiently; let us make this precise. We consider $\bbzn$ first.
As usual, $\pto$ denotes convergence in probability.

\begin{corollary}
  \label{CJ1}
Let $S_{n,k}$ denote a $k$-element subset of $\bbzn$ chosen uniformly at random.
For every fixed $k$ we have $\eff(S_{n,k},\bbzn)\pto1$ as \ntoo.
\end{corollary}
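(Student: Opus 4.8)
The plan is to deduce the corollary from \refT{TJ1} together with a routine first-moment estimate, the point being that a fixed nontrivial linear relation with bounded coefficients is satisfied by only a vanishing proportion of $k$-element subsets of $\bbzn$. Since $\eff=1/\kk$ and $\kk(S,\bbzn)\ge1$ for every $S$, it suffices to show that for each fixed $\eps>0$ we have $\Pr\bigl(\kk(S_{n,k},\bbzn)\ge1+\eps\bigr)\to0$ as $\ntoo$. The case $k=1$ is trivial (then $\kk(S,\bbzn)=1$ always), so assume $k\ge2$.

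Fix $\eps>0$ and let $C=C(k,\eps)$ be the constant supplied by \refT{TJ1}. For $n\ge k$ that theorem tells us that if $\kk(S,\bbzn)\ge1+\eps$ for $S=\set{\qx_1,\dots,\qx_k}$, then $\sumik a_i\qx_i=0$ in $\bbzn$ for some integer vector $\mathbf a=(a_1,\dots,a_k)\neq\mathbf 0$ with $\sumik a_i=0$ and $\max_i|a_i|\le C$; there are at most $(2C+1)^k$ such vectors, a bound independent of $n$. Regarding $S_{n,k}$ as an ordered tuple $(\qx_1,\dots,\qx_k)$ of distinct uniformly random elements of $\bbzn$ (legitimate since every event considered is permutation-invariant), we obtain
\begin{equation*}
 \set{\kk(S_{n,k},\bbzn)\ge1+\eps}\ \subseteq\ \bigcup_{\mathbf a}\set{\sumik a_i\qx_i=0\text{ in }\bbzn},
\end{equation*}
the union being over the finitely many admissible vectors $\mathbf a$.

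Next I would bound $\Pr(E_{\mathbf a})$ for a single fixed $\mathbf a\neq\mathbf 0$, where $E_{\mathbf a}=\set{\sumik a_i\qx_i=0}$. Choose $i_0$ with $a_{i_0}\neq0$; conditioning on the coordinates $\qx_i$ with $i\neq i_0$, the element $b=\sum_{i\neq i_0}a_i\qx_i$ of $\bbzn$ is determined and $E_{\mathbf a}$ becomes the event $a_{i_0}\qx_{i_0}=-b$. This congruence has at most $\gcd(a_{i_0},n)\le|a_{i_0}|\le C$ solutions, while $\qx_{i_0}$ is uniform over the $n-k+1$ elements of $\bbzn$ distinct from the other $\qx_i$, so $\Pr(E_{\mathbf a})\le C/(n-k+1)$. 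A union bound now gives
\begin{equation*}
 \Pr\bigl(\kk(S_{n,k},\bbzn)\ge1+\eps\bigr)\ \le\ (2C+1)^k\,\frac{C}{n-k+1},
\end{equation*}
which tends to $0$ as $\ntoo$ with $k,\eps$ fixed. Since $\eps>0$ was arbitrary and $\kk\ge1$, this gives $\kk(S_{n,k},\bbzn)\pto1$, hence $\eff(S_{n,k},\bbzn)\pto1$.

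All the real content sits in \refT{TJ1}, which we may assume; the rest is elementary, so I do not anticipate a serious obstacle. The only points needing a little care are that $C$ is independent of $n$ (so that the union over the coefficient vectors stays bounded), and the elementary fact that a linear congruence $ay\equiv c\pmod n$ has at most $|a|$ solutions, which makes each $E_{\mathbf a}$ of probability $O(1/n)$.
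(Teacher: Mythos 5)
Your proposal is correct and follows essentially the same route as the paper: apply \refT{TJ1} to reduce the event $\set{\kk\ge1+\eps}$ to a union over the at most $(2C+1)^k$ admissible coefficient vectors, and then show each linear relation holds with probability $O(1/n)$. The only (harmless) differences are that you sample the $k$ distinct elements directly rather than taking \iid{} elements and conditioning on distinctness, and that your bound $\Pr(E_{\mathbf a})\le\gcd(a_{i_0},n)/(n-k+1)\le C/(n-k+1)$ is in fact slightly more careful than the paper's assertion that $\sum_i a_i\qx_i$ is exactly uniform on $\bbzn$ (which can fail when all the nonzero $a_i$ share a factor with $n$, though the needed $O(1/n)$ estimate survives).
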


\begin{proof}
With $k$ fixed, let $\tS=\tS_{n,k}\=\set{\qx_1,\dots,\qx_k}$, where $\qx_1,\dots,\qx_k$ are
independent uniformly random elements of $\bbzn$.
Then $S_{n,k}\eqd(\tS\mid|\tS|=k)$, and since $\Pr(|\tS|\neq k)\le k^2/n=o(1)$, it
suffices to consider $\tS$. 

Let $\eps>0$ and apply \refT{TJ1}. For every fixed
$(a_1,\dots,a_k)\neq(0,\dots,0)$, the sum $\sum_ia_i\qx_i$ is
uniformly distributed on $\bbzn$, and thus 
$\Pr(\sum_ia_i\qx_i=0)=1/n$.
Hence the probability that one of the at most $(2C+1)^k$ possible
conditions \eqref{TJ1c} holds
is at most $(2C+1)^k/n =o(1)$, and thus \refT{TJ1} yields
$\Pr(\kk(\tS,\bbzn)\ge1+\eps)\to0$ as \ntoo.
Thus $\kk(S,\bbzn)\pto1$. Equivalently, $\eff(S,\bbzn)\pto 1$.
\end{proof}

We have seen that, for fixed $k$ and large $n$, most $k$-subsets cover $\bbzn$ 
with close to optimal efficiency.
In contrast, \refT{finite_bad} and \refR{Rmore} show that if $n$ is
only slightly larger than $k$, then most $k$-subsets of $\bbzn$ have almost the
worst possible efficiency.
\begin{question}
Where is the threshold? 
More precisely, for which sequences $n=n(k)\ge k$ is it true that 
$\eff(S_{n,k},\bbzn)\pto1$ as \ktoo?
For which sequences is $\eff(S_{n,k},\bbzn)\sim1/\log k$?
\end{question}

Our next corollary shows that `typical' $k$-element subsets of $\bbZ$ cover
efficiently. To make sense of this, we choose our subsets from a large
interval $[n]=\{1,2,\ldots,n\}$, and take limits as $n\to\infty$.

\begin{corollary}
  \label{CJ2}
Let $S_{n,k}'$ denote a $k$-element subset of $[n]$ chosen uniformly at random.
For every fixed $k$ we have $\eff(S_{n,k}',\bbZ)\pto1$ as \ntoo.
\end{corollary}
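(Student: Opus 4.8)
The plan is to transcribe the proof of \refC{CJ1} almost verbatim, using \refT{TJ1Z} (coverings of $\bbZ$) in place of \refT{TJ1} (coverings of $\bbzn$). First I would pass from the uniformly random $k$-subset $S'_{n,k}$ of $[n]$ to the set $\tS'\=\set{x_1,\dots,x_k}$, where $x_1,\dots,x_k$ are independent and uniform on $[n]$: since $\Pr(|\tS'|\neq k)\le\binom k2/n=o(1)$ and $S'_{n,k}\eqd(\tS'\mid|\tS'|=k)$, it suffices to show that $\Pr\bigpar{\kk(\tS',\bbZ)\ge1+\eps,\ |\tS'|=k}=o(1)$ for every fixed $\eps>0$.

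Next, fix $\eps>0$ and let $C=C(k,\eps)$ be the constant from \refT{TJ1Z}. That theorem says that if $|\tS'|=k$ and $\kk(\tS',\bbZ)\ge1+\eps$, then there is an integer vector $(a_1,\dots,a_k)$ with $\sum_ia_i=0$, $0<\max_i|a_i|\le C$, and $\sum_ia_ix_i=0$. A union bound over the at most $(2C+1)^k$ such vectors then gives
\[
 \Pr\bigpar{\kk(\tS',\bbZ)\ge1+\eps,\ |\tS'|=k}\le\sum_{(a_i)}\Pr\Bigpar{\sum_{i=1}^k a_ix_i=0}.
\]
For each fixed such $(a_i)$, choose an index $j$ with $a_j\neq0$ and condition on $(x_i)_{i\neq j}$; the equation $\sum_ia_ix_i=0$ then forces $x_j$ to equal a single prescribed rational number, which the uniform variable $x_j$ attains with probability at most $1/n$. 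Hence each summand is $\le1/n$, the whole sum is $\le(2C+1)^k/n\to0$, and dividing by $\Pr(|\tS'|=k)=1-o(1)$ yields $\Pr(\kk(S'_{n,k},\bbZ)\ge1+\eps)\to0$ as \ntoo. Since $\eps>0$ was arbitrary and $\kk\ge1$ always, this means $\kk(S'_{n,k},\bbZ)\pto1$, i.e.\ $\eff(S'_{n,k},\bbZ)\pto1$.

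There is no real obstacle here: the argument is a routine adaptation of the proof of \refC{CJ1}, the only points requiring a little care being the reduction from a uniform $k$-subset to an i.i.d.\ sample (handled by conditioning on the event $|\tS'|=k$, whose complement has probability $o(1)$) and the elementary observation that a fixed nontrivial linear relation among independent uniform elements of $[n]$ holds with probability $O(1/n)$ over $\bbZ$, just as it holds with probability exactly $1/n$ over $\bbzn$. Alternatively, one could stay with the uniform $k$-subset model and bound directly, for each fixed assignment of a coefficient vector to the $k$ chosen elements, the probability that the corresponding relation vanishes — again by conditioning on all but one element — but the i.i.d.\ route keeps the proof parallel to that of \refC{CJ1}.
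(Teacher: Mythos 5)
Your argument is correct, but it is not the route the paper takes. The paper's proof of \refC{CJ2} is a two-line reduction: under the obvious coupling identifying a uniform $k$-subset of $[n]$ with a uniform $k$-subset of $\bbzn$, inequality \eqref{l2b} from \refL{LZn} gives $\eff(S_{n,k}',\bbZ)\ge\eff(S_{n,k},\bbzn)$, and the conclusion is then immediate from \refC{CJ1}. You instead rerun the whole probabilistic argument over $\bbZ$, invoking \refT{TJ1Z} in place of \refT{TJ1}; the only new ingredient is the observation that a fixed nontrivial relation $\sum_i a_i x_i=0$ among i.i.d.\ uniform elements of $[n]$ holds with probability at most $1/n$ (by conditioning on all but one coordinate), which is fine, and your handling of the conditioning on $|\tS'|=k$ is also fine. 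What the paper's route buys is economy: it needs only the cyclic version \refT{TJ1} (via \refC{CJ1}) together with the elementary monotonicity $\kk(S,\bbZ)\le\kk(S,\bbzn)$, and it avoids repeating the union-bound computation. What your route buys is independence from \refL{LZn}: it works directly from \refT{TJ1Z} (which the paper anyway derives from \refT{TJ1}), and it makes explicit that the $\bbZ$-statement is self-contained once Schmidt's theorem over $\bbZ$ is available. Either way the corollary stands.
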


\begin{proof}
Using the obvious coupling between random subsets
of $[n]$ and random subsets of $\bbZ_n$, \eqref{l2b} gives
$\eff(S_{n,k}',\bbZ)\ge\eff(S_{n,k},\bbzn)$. The result follows by \refC{CJ1}.
\end{proof}

One can also use the results above to show that certain deterministic
sequences of sets $S$ are asymptotically efficient.
\begin{example}
Fix distinct integers $a$ and $b$, and let $S=S_n\=\set{a,b,n}$.
It is easily seen that, for any given $C$, if $n$ is large enough
then the conclusion of \refT{TJ1Z} cannot hold for this $C$.
Thus \refT{TJ1Z} implies that
$\eff(S_n,\bbZ)\to1$ as \ntoo.
(See also \cite{Schmidt,Tuller}.)
\end{example}

In the appendix
we give a new proof of Schmidt's results; it seems to us that
our proof is different from that in~\cite{Schmidt}.
Ours is based on the following
number theoretic lemma, which may be seen
as a discrete version of Kronecker's Theorem (see, for
example,~\cite[Chapter XXIII]{HardyWright}), which states
that if $x_1,\dots,x_m$ are real numbers that are
linearly independent over $\bbQ$,
then the set of multiples of $(x_1,\dots,x_m)$
is dense in the torus $\bbR^m/\bbZ^m$.
It may well be that this lemma is known, but we have not 
been able to find a reference, perhaps because much more attention
has been paid to quantitative versions of Kronecker's Theorem
than to discrete ones. As usual, we write $(x,y)$ for the greatest common divisor of
two integers $x$ and $y$.

\begin{lemma}
  \label{LJ1strong}
For every $m\ge1$ and $\gd>0$ there exists $C=C(m,\gd)$ such that
for every $n$ and every $b_1,\dots,b_m\in\bbZ$,
if $\q x_1,\dots,\q x_m$ are any $m$ elements of\/
$\bbzn$, then at least one of the following holds.
\begin{romenumerate}
  \item
There exists a vanishing linear combination
$\sum_{i=1}^m a_i\qx_i =0$ in $\bbzn$ with
integer coefficients $a_1,\dots,a_m$
satisfying $0<\max_i|a_i|\le C$.
\item
There exist $z\in\bbZ$ with $(z,n)=1$ and $y_1,\dots,y_m\in [0,\gd n]\cap\bbZ$ such
that $z\qx_i-\q b_i=\q y_i$ in $\bbzn$, \ie,
\begin{equation*}
  z x_i-b_i\equiv y_i \pmod n,
\qquad
i=1,\dots,m.
\end{equation*}
\end{romenumerate}
\end{lemma}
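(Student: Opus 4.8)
The plan is to prove the equivalent statement: if alternative~(i) fails, then~(ii) holds. Write $v=(\q x_1,\dots,\q x_m)\in\bbzn^m$, $\q b=(\q b_1,\dots,\q b_m)$, and let $\langle v\rangle=\{zv:z\in\bbzn\}$ be the cyclic subgroup generated by~$v$, with annihilator $\langle v\rangle^\perp=\{\xi\in\bbzn^m:\sum_i\xi_i\q x_i=0\text{ in }\bbzn\}$; thus ``(i) fails'' means $\langle v\rangle^\perp$ contains no nonzero $\xi$ with $\|\xi\|_\infty\le C$. Two harmless reductions: if $n\le C$ then $n\q x_1=0$ is a relation of the forbidden kind, so we may assume $n$ large; and it suffices (a)~to produce \emph{some} $z_0\in\bbzn$ with $z_0\q x_i-\q b_i\in[\tfrac13\gd n,\tfrac23\gd n]$ for every $i$ and then (b)~to perturb $z_0$ into a multiplier $z$ with $(z,n)=1$ that still satisfies $z\q x_i-\q b_i\in[0,\gd n]$ --- the perturbation in~(b) will move each coordinate by less than $\tfrac13\gd n$.

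For~(a) I would run a Fourier/Fej\'er argument on $\bbzn^m$. Let $L=\lfloor\tfrac13\gd n\rfloor$ and let $F_1$ be a positive multiple of the autocorrelation $\etta_J\star\etta_J$ with $J=[0,\lceil L/2\rceil)\cap\bbZ$, translated to be supported on $[0,L]$: it has $\widehat{F_1}\ge 0$ and, after the natural normalisation, $\widehat{F_1}(0)\asymp\gd$, $\sum_t\widehat{F_1}(t)=F_1(0)=O(1)$ \emph{uniformly in $n$ and $\gd$} (because $|\widehat{\etta_J}(t)|\le\tfrac12|t|^{-1}$ forces $\widehat{F_1}(t)=O(\gd^{-1}t^{-2})$, which is summable). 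Put $F(w)=\prod_{i=1}^m F_1(w_i-\lfloor\tfrac13\gd n\rfloor)$, so $\operatorname{supp}F\subseteq[\tfrac13\gd n,\tfrac23\gd n]^m$, $\widehat F\ge 0$, $\widehat F(0)\asymp\gd^m$, and $\sum_\xi\widehat F(\xi)=F_1(0)^m=O(1)$. If no $z_0$ as in~(a) exists, then $F(zv-\q b)=0$ for every $z$, hence $\sum_{w\in\langle v\rangle}F(w-\q b)=0$; expanding by Fourier inversion on $\bbzn^m$ and using $\sum_{w\in\langle v\rangle}e^{2\pi\ii\,\xi\cdot w/n}=|\langle v\rangle|\,\ett[\xi\in\langle v\rangle^\perp]$ gives
\[
\widehat F(0)\ \le\ \sum_{0\ne\xi\in\langle v\rangle^\perp}|\widehat F(\xi)|\ \le\ \sum_{\|\xi\|_\infty>C}\,\prod_{i=1}^m\widehat{F_1}(\xi_i)\ =\ F_1(0)^m-\Bigl(\,\sum_{|t|\le C}\widehat{F_1}(t)\Bigr)^m,
\]
the middle step being where the failure of~(i) is used. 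The right-hand side is $O(m/(\gd C))$, so it is strictly smaller than $\widehat F(0)\asymp\gd^m$ as soon as $C=C(m,\gd)$ is large enough (any $C\gg m\gd^{-m-1}$ works); this contradiction proves~(a).

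For the coprimality upgrade~(b), I would use that the successful multipliers contain a long arithmetic progression. By Dirichlet's simultaneous approximation theorem (pigeonhole in $(\bbR/\bbZ)^m$) there is, for any $Q$, an $\ell$ with $1\le\ell\le Q^m$ and $\|\ell x_i/n\|\le 1/Q$ for all $i$. Take $Q$ a suitable multiple of $g(n)/\gd$, where $g(n)$ is Jacobsthal's function; a standard M\"obius/PNT sieve estimate gives $g(n)=n^{o(1)}$, so $\ell=n^{o(1)}$, and each of $z_0,z_0+\ell,\dots,z_0+g(n)\ell$ moves every coordinate of $z_0\q x_i-\q b_i$ by less than $\tfrac13\gd n$, hence lands in $[0,\gd n]^m$. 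Since any run of more than $g(n)$ consecutive integers contains one coprime to $n$, when $(\ell,n)=1$ this progression --- being a unit dilate of such a run --- contains a $z$ coprime to $n$, which is the element we want; the case $(\ell,n)>1$ needs a little extra care but is routine (one may, for instance, treat separately the primes dividing $\ell$). This yields~(ii).

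The hard part is the coprimality step~(b): keeping all estimates uniform in $n$ while extracting a multiplier coprime to~$n$. This is precisely why I would \emph{not} try to build $(z,n)=1$ into the Fourier sum via Ramanujan sums: there the main term is only $\asymp\gd^m\gf(n)$, whereas frequencies $\xi$ for which $(\xi\cdot v,n)$ is a large divisor of $n$ --- which need not come from any short relation --- can contribute on the order of $n$ to the error, and beating that down would force a constant $C$ depending on $n$ (through $n/\gf(n)$ or $2^{\go(n)}$). Step~(a), by contrast, is a clean and self-contained computation with an explicit constant.
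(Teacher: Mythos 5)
Your Step (a) is correct and is essentially the paper's own first step (\refL{LJ1weak}): a Fej\'er-kernel tensor product, Fourier inversion over the cyclic group generated by $(\q x_1,\dots,\q x_m)$, and the observation that when (i) fails the only frequency of the annihilator in $[-C,C]^m$ is $0$ while the tail of $\widehat F$ is $O(m/(\gd C))\ll\gd^m$. The genuine gap is in Step (b), at exactly the point you call routine. If the Dirichlet denominator $\ell$ has $d:=(\ell,n)>1$, then every term of the progression $z_0+j\ell$ is congruent to $z_0$ modulo $d$; if $(z_0,d)>1$ --- and Step (a) gives you no control over $z_0$ modulo anything, only its existence --- then no term of the progression is coprime to $n$ and the Jacobsthal argument says nothing. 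You cannot ``treat separately the primes dividing $\ell$'': the problematic primes are precisely those dividing both $\ell$ and $n$, the box principle gives no control over $(\ell,n)$, and arranging $\ell$ to be simultaneously a good approximation denominator and coprime to $n$ (or $\equiv 1$ modulo the bad primes) is a Diophantine problem of essentially the same difficulty as the lemma itself. A counting fix is also unavailable: Step (a) shows that a proportion $\ge c(m,\gd)$ of all $z\in\bbzn$ are good, but $\phi(n)/n$ can tend to $0$, so the good set need not meet $\bbznx$ for density reasons alone. Extracting a coprime multiplier is the entire content of the strong lemma over the weak one, and it is not routine.

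Moreover, your reason for rejecting the Ramanujan-sum route --- which is the paper's actual proof --- rests on a miscalculation. The paper sums over $z\in\bbznx$ and must control $\ffznx(\q u)$ at $u=\sum_j a_jx_j$. The key point (\refL{LJ4}, proved from the multiplicative formula for Ramanujan sums) is that there is a constant $P=P(\eta)$, a product of primes $\le 1+1/\eta$, such that either $|\ffznx(\q u)|<\eta\phi(n)$ or $n\mid Pu$. Hence a frequency $(a_1,\dots,a_m)\in[-C_1,C_1]^m$ whose Ramanujan sum is large \emph{does} come from a short relation, namely $\sum_j(Pa_j)\q x_j=0$ with coefficients at most $PC_1$; this is conclusion (i) with $C=PC_1$, independent of $n$. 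The contributions of order $n$ that you feared cannot occur outside case (i), so the main term $(\gd/2)^{2m}\phi(n)$ wins after choosing $\eta$ and $C_1$ in terms of $m$ and $\gd$ only. In short, the route you dismissed is the one that closes the gap your Step (b) leaves open.
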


For the proof, see the appendix.

\section{$G=\bbR$}\label{SR}

In this section we consider coverings of the other basic non-compact group, 
namely $\bbR$.
Of course, if the union $T+S$ of a set of translates $t+S$ of a given set $S$ covers
$\bbR$, then $S$ and $T$ cannot both be discrete. There are two natural analogues
of the results and problems discussed so far in this paper. In one,
$S$ is discrete (typically finite), and the `size' or `density' of $T$
is measured using the Lebesgue measure. Such coverings
are considered implicitly by, for example,
Rohlin~\cite{Rohlin} and Laczkovich~\cite{Laczkovich},
and explicitly by Schmidt~\cite{Schmidt}.
It seems closer to the spirit of the original papers of
Erd\H os~\cite{Erdos}, Lorentz~\cite{Lorentz} and Newman~\cite{Newman_dens},
however, to consider cases where $T$ is discrete. Then $S$ cannot be; usually,
we will take $S$ to be some `nice' set, such as a finite union of intervals.

We shall only consider coverings by 
measurable sets $S\subset\bbR$ with non-empty interior.
Typically, $\nn(S,\bbR)=\infty$, so we need to modify the definitions in \refS{Scompact}.
Since the modifications from the compact case to the case $G=\bbR$ 
are closely analogous to those from the finite case to the case $G=\bbZ$,
we describe these changes only briefly.

Let $\gl$ denote the Lebesgue measure.
For $x>0$, let $\nn(S,x)$
be the smallest number of translates of $S$ that cover
the interval $[0,x]$, \ie,
\begin{equation}\label{trx}
  \nn(S,x)\=\min\set{|T|:T+S\supseteq [0,x]}.
\end{equation}
Obviously, the number of translates required to cover any other
interval of length $x$ is the same, and
\begin{equation}
\nn(S,x+y)\le\nn(S,x)+\nn(S,y)  
\end{equation}
for all $x,y>0$. Hence $\nn(S,x)/x$ converges as $x\to\infty$,
and the limit satisfies
\begin{equation}\label{tr}
  \nn(S)\=\lim_\xtoo \nn(S,x)/x = \inf_{x>0} \nn(S,x)/x. 
\end{equation}
Assuming, as we do, that $S$ has non-empty interior, then
this \emph{covering density} $\nn(S)$ is finite.

Using \eqref{tr} in place of \eqref{tz} and \eqref{tzinf},
and working with $\gl(S)$ in place of $|S|$,
we may adapt 
the definitions of $\nn(S)$, $\kk(S)$ and $\eff(S)$ as in \refS{SZ}.
We may use the notation $\nn(S,\bbR)$ \etc{} for emphasis and
clarity.
Similarly, we can make sense of the efficiency of a particular covering,
at least when it is periodic.

In Sections \ref{SRgreedy} and \ref{SR2}, 
we shall consider the case when $S$ is a
union of $k$ closed intervals $I_i$: $S=\bigcup_{i=1}^k I_i$.
The case $k=1$ is trivial; then
$\nn(S,x)=\ceil{x/\gl(S)}$, so $\nn(S)=\gl(S)\qw$ and
$\kk(S)=\eff(S)=1$.

For larger $k$ we obtain a trivial bound by ignoring all but one of
the intervals, $I_j$, say, and using the cover
$\cT\=\set{ia_j+S:i\in\bbZ}$ where $a_j\=\gl(I_j)$. 
This covering is periodic, and has density
$\nn(\cT)=a_j\qw$; hence its efficiency is
$\eff(\cT)=a_j/\gl(S)=a_j/\sum_i a_i$.
Choosing $j$ so that $a_j$ is maximal, this yields
\begin{equation}\label{eai}
  \eff(S)\ge\frac{\max_i a_i}{\sum_i a_i}.
\end{equation}
In particular, for any set $S$ that is a union of $k$ (closed)
intervals, 
\begin{equation}\label{es1k}
  \eff(S)\ge\frac1k.
\end{equation}

We define 
\begin{align}
\gb_k&\=
\inf\Bigset{\eff\Bigpar{\bigcup_1^k I_i}:
I_i \text{ intervals in $\bbR$}},
\label{beta}
\\
\gam_k&\=
\inf\Bigset{\eff\Bigpar{\bigcup_1^k I_i}:
I_i \text{ intervals in $\bbR$ of the same length}}.
\label{gamma}
\end{align}
Obviously, by making small magnifications of the intervals, we may
restrict the intervals in \eqref{beta} and \eqref{gamma}
to be closed (or open, or half-open) without changing the infima.
Also, we may require the intervals in \eqref{gamma} to all have
unit length.

From \eqref{es1k} and trivial inequalities
we have $k\qw \le \gb_k\le\gam_k \le 1$. The lower bound will be improved in
\refS{SRgreedy}. 
It is interesting to try to find $\gb_k$ and $\gam_k$ exactly for
small $k$. 
Trivially, $\gb_1=\gam_1=1$; we shall prove that $\gb_2=\frac23$ 
and $\gam_2=\frac34$
in \refS{SR2}.

\begin{theorem}\label{th_abc}
For every $k\ge1$,
$\gb_k\le\gam_k\le\ga_k\le1$.
\end{theorem}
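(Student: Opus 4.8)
The plan is to prove the two nontrivial inequalities $\gam_k\le\ga_k$ and $\gb_k\le\gam_k$ (the inequality $\ga_k\le 1$ is \eqref{a2}/\eqref{ez}), by exhibiting, for each $k$-element set $S\subset\bbZ$, a union of $k$ equal-length intervals in $\bbR$ whose efficiency is at most $\eff(S,\bbZ)$, plus an almost trivial comparison between equal-length and arbitrary intervals.

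\begin{proof}
The inequality $\gb_k\le\gam_k$ is immediate from the definitions \eqref{beta} and \eqref{gamma}, since the infimum defining $\gam_k$ is over a subclass of the sets considered in $\gb_k$. Likewise $\ga_k\le1$ by \eqref{ez} (or \eqref{a2}). It remains to prove $\gam_k\le\ga_k$. Fix $k\ge1$ and let $S=\set{x_1,\dots,x_k}$ be an arbitrary $k$-element subset of $\bbZ$; we must find a union $S'$ of $k$ equal-length intervals with $\eff(S',\bbR)\le\eff(S,\bbZ)$, as then taking the infimum over $S$ gives $\gam_k\le\eff(S,\bbZ)$ for all such $S$, hence $\gam_k\le\ga_k$ by \eqref{ga}.

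The natural choice is to "fatten" $S$: for a parameter $0<\eps<1$ set $S'=S'_\eps\=S+[0,\eps]=\bigcup_{i=1}^k [x_i,x_i+\eps]$, a union of $k$ intervals of length $\eps$ (these are disjoint once $\eps<1$, so $\gl(S')=k\eps$). The key point is that coverings of $\bbZ$ by translates of $S$ correspond to coverings of $\bbR$ by translates of $S'$, as follows. If $T\subset\bbZ$ satisfies $T+S\supseteq[n]$ (in $\bbZ$), then $T+S'\supseteq \bigcup_{j\in[n]}(j+[0,\eps])$, which together with finitely many extra translates near the ends covers an interval of length $\ge n$ in $\bbR$; letting $n\to\infty$ one gets $\nn(S',\bbR)\le\nn(S,\bbZ)$. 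Conversely — and this is the direction one actually needs for the efficiency comparison — from an optimal periodic covering $\cT$ of $\bbR$ by translates of $S'$ (which exists by the $\bbR$-analogue of \refT{T:rat}, or one can argue directly with \eqref{tr} and long intervals), one restricts attention to the integer points: the translates $t+S'$ that meet a given integer $j$ are those with $t\in j-S+[-\eps,0]$, and as $\eps\to0$ each such translate "sees" essentially only the integer shifts. Rounding the translation amounts $t$ to nearby integers shows $\nn(S,\bbZ)\le\nn(S',\bbR)+o(1)$ relative to the relevant normalisation, i.e.\ $\nn(S')\ge(1-o(1))\nn(S)$ as $\eps\to0$. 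Since $\gl(S')=k\eps$ while $|S|=k$, and $\eff$ is defined as $1/(\nn\cdot\text{measure})$ after the appropriate normalisation, one obtains $\eff(S',\bbR)\le(1+o(1))\eff(S,\bbZ)$ as $\eps\to0$, whence $\gam_k\le\eff(S,\bbZ)$ by taking $\eps\to0$ and then the infimum over $S$.

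The main obstacle is making the rounding argument in the converse direction precise: one must show that an efficient covering of $\bbR$ by translates of the thin set $S'_\eps$ cannot do substantially better than an integer covering, uniformly as $\eps\to0$. The clean way to handle this is to note that if $t+S'_\eps$ meets the interval $(j-\eps,j+\eps)$ around an integer $j$, then $t$ lies within $\eps$ of some element of $j-S$, so replacing each translation $t$ by the nearest integer changes $t+S'_\eps$ by at most $\eps$ in each coordinate; for $\eps$ small this perturbed family still covers all of $\bbZ$ (since $S$ is a fixed finite set of integers, a perturbation of size $<1/2$ of an interval covering does not uncover any integer), giving an integer covering of $\bbZ$ with the same density. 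One should be slightly careful with translates that meet $\bbR$ but none of the perturbed integer intervals — but these only help, so discarding them can only decrease the count. This yields $\nn(S,\bbZ)\le\nn(S'_\eps,\bbR)$ exactly for $\eps<1/2$, and combined with $\gl(S'_\eps)=k\eps\to0$ and the normalisation conventions of \refS{SR}, the desired inequality $\eff(S'_\eps,\bbR)\le\eff(S,\bbZ)$ follows on letting $\eps\to0$.
\end{proof}
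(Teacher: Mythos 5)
There is a genuine quantitative gap in your deduction of $\gam_k\le\ga_k$. Writing $S'_\eps=S+[0,\eps]$, you have $\gl(S'_\eps)=k\eps$, so
\begin{equation*}
\kk(S'_\eps,\bbR)=\nn(S'_\eps,\bbR)\,\gl(S'_\eps)=k\eps\,\nn(S'_\eps,\bbR),
\qquad
\kk(S,\bbZ)=k\,\nn(S,\bbZ),
\end{equation*}
and the inequality you need, $\eff(S'_\eps,\bbR)\le\eff(S,\bbZ)$, is equivalent to $\nn(S'_\eps,\bbR)\ge\nn(S,\bbZ)/\eps$. Your rounding argument only yields $\nn(S'_\eps,\bbR)\ge\nn(S,\bbZ)$: from a covering of $\bbR$ by translates of $S'_\eps$ you keep those translates whose fractional part is within $\eps$ of an integer (the only ones that can cover points of $\bbZ$), round them, and discard the rest. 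But in a covering of $\bbR$ by the thin set $S'_\eps$ only a fraction of order $\eps$ of the translates are involved in covering $\bbZ$; the remaining ones are needed to cover the other phase classes $\bbZ+x$, $x\in(0,1)$, and you have thrown them away. Plugging the weak bound into the efficiency gives only $\eff(S'_\eps,\bbR)\le\eff(S,\bbZ)/\eps$, which blows up rather than converging as $\eps\to0$. (The bound $\nn(S'_\eps,\bbR)\ge\nn(S,\bbZ)/\eps$ is in fact true, but proving it requires keeping all translates: each translate $t+S'_\eps$ meets $\bbZ+x$ only for $x$ in a set of phases of measure $\eps$, while for every phase $x$ the translates serving $\bbZ+x$ must have density at least $\nn(S,\bbZ)$; integrating over $x\in[0,1)$ supplies the missing factor $1/\eps$.) Your "converse" preliminary claim is also not right as stated --- $T+S'_\eps\supseteq[n]+[0,\eps]$ has measure about $n\eps$ and does not cover an interval of length $n$ --- but that direction is not needed.

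The paper avoids all of this by taking $\eps=1$: with $\tS=S+[0,1)$ a union of $k$ half-open unit intervals one has $\gl(\tS)=|S|=k$, the measures match, and the rounding is exact (if $m\in t+s_j+[0,1)$ with $m,s_j\in\bbZ$ then $m=\ceil{t}+s_j$), so $\nn(S,n;\bbZ)\le\nn(\tS,n;\bbR)$ immediately gives $\kk(S,\bbZ)\le\kk(\tS,\bbR)$ and hence $\gam_k\le\eff(\tS,\bbR)\le\eff(S,\bbZ)$. If you replace your $\eps$-fattening by unit intervals, your argument becomes correct and is essentially the paper's proof; if you insist on $\eps<1$ you must supply the phase-averaging step above.
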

\begin{proof}
We have already observed the first inequality.

For the second, it is slightly simpler to use half-open (or open)
intervals.
Given $S=\set{s_1,\dots,s_k}\subset\bbZ$, let
$I_j=[s_j,s_j+1)$, and set $\tS\=\bigcup_{j=1}^k I_j = S+[0,1)$.

Suppose that $T=\set{t_1,\dots,t_l}\subset\bbR$ is such that
$T+\tS\supseteq[0,n]$.
If $m\in\{1,2,\ldots,n\}$, then $m\in T+\tS$,
so $m\in t_i+I_j$ for some $t_i\in T$ and $1\le j\le k$.
In other words, $m=t_i+s_j+x$ for some $t_i\in T$, $s_j\in S$ and
$x\in[0,1)$. Hence $t_i+x=m-s_j\in\bbZ$, so $t_i+x=\ceil{t_i}$ and
$m=\ceil{t_i}+s_j$. Consequently, 
setting $\hT\=\set{\ceil{t}:t\in T}$, we have $\hT+S\supseteq[n]$.

This shows that the number $\nn(S,n;\bbZ)$ of translates of $S$
needed to cover $\{1,2,\ldots,n\}$ is at most 
$|\hT|\le|T|$. Minimizing over $T$, we thus have 
$\nn(S,n;\bbZ)\le\nn(\tS,n;\bbR)$, where $\nn(\tS,n;\bbR)$,
as in \eqref{trx},
is the minimum number of translates of $\tS\subset \bbR$ needed
to cover $[0,n]\subset\bbR$.
From \eqref{tzinf} and \eqref{tr}, it follows that
\begin{align*}
 \nn(S,\bbZ)&\le\nn(\tS,\bbR).
\intertext{Since $\gl(\tS)=|S|$, the definitions \eqref{kz} and \eqref{ez}
and their analogues for $\bbR$ then give}
\kk(S,\bbZ)&\le\kk(\tS,\bbR),
\\
\eff(S,\bbZ)&\ge\eff(\tS,\bbR)\ge\gam_k.
\end{align*}
Taking the infimum in \eqref{ga} we find $\ga_k\ge\gam_k$.
\end{proof}
Note
that since $\alpha_2=1$, \refC{CR2} in \refS{SR2} shows that
the inequalities $\gb_k\le\gam_k\le\ga_k$ are strict for
$k=2$; we conjecture that these inequalities are strict for all larger
$k$ too.

\section{Improved bounds for unions of intervals in $\bbR$}\label{SRgreedy}

It seems difficult to use the greedy algorithm directly on $\bbR$,
since it is not obvious when a small residual uncovered set really is
empty.
But we can use our result for $\bbZ$, which was based on the greedy
algorithm in $\bbZ_n$ for large $n$.

\begin{theorem}\label{TRgreedy}
  If $S=\bigcup_{i=1}^k I_i$ is a union of $k\ge 2$ intervals in $\bbR$, then
  \begin{equation*}
\kk(S)\le \log k + \log\log k + 5,
  \end{equation*}
and thus $\eff(S)\ge(1-o(1))/\log k$.
\end{theorem}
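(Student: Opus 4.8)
The plan is to reduce to the integer case, \refT{TZgreedy}, by discretising $S$. Write $S=\bigcup_{i=1}^k I_i$; after merging overlapping intervals and discarding degenerate ones we may assume the $I_i$ are disjoint non-degenerate closed intervals (with $k$ not increased). Fix a scale $\eps>0$, to be chosen of order $\gl(S)/(k\log k)$, and let $\ff S\subseteq\bbZ$ consist of those $j$ for which the grid cell $[j\eps,(j+1)\eps]$ lies inside $S$; set $S_\eps\=\eps\bigpar{\ff S+[0,1]}\subseteq S$, where $\ff S+[0,1]$ denotes the Minkowski sum, i.e.\ $\bigcup_{j\in\ff S}[j,j+1]$. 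Then $\ff S$ is a union of at most $k$ intervals of $\bbZ$ (one run of grid points per $I_i$); we have $|\ff S|=\gl(S_\eps)/\eps\le\gl(S)/\eps$; and, since each $I_i$ loses at most $2\eps$ of its length in passing to the grid cells it contains, $\gl(S_\eps)\ge\gl(S)-2k\eps$. For the range of $\eps$ we use, $\ff S\neq\emptyset$, since the longest $I_i$ has length at least $\gl(S)/k>2\eps$.

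The core of the argument is a short chain of inequalities. First, since $S_\eps\subseteq S$, every covering of an interval by translates of $S_\eps$ is also one by translates of $S$, so $\nn(S)\le\nn(S_\eps)$, and hence, using that $\kk$ is scale-invariant,
\begin{equation*}
 \kk(S)=\nn(S)\gl(S)\le\nn(S_\eps)\gl(S)=\kk(S_\eps)\,\frac{\gl(S)}{\gl(S_\eps)}=\kk\bigpar{\ff S+[0,1]}\,\frac{\gl(S)}{\gl(S_\eps)}.
\end{equation*}
Next I transfer from $\bbR$ back to $\bbZ$, in the spirit of \refT{th_abc}: if $T\subseteq\bbZ$ satisfies $T+\ff S\supseteq[n]$, then $T+(\ff S+[0,1])\supseteq[1,n+1]$, so $\nn(\ff S+[0,1],\bbR)\le\nn(\ff S,\bbZ)$; multiplying by $\gl(\ff S+[0,1])=|\ff S|$ gives $\kk(\ff S+[0,1],\bbR)\le\kk(\ff S,\bbZ)$. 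Applying \refT{TZgreedy} to the $|\ff S|$-element set $\ff S\subseteq\bbZ$ yields $\kk(\ff S,\bbZ)\le H_{|\ff S|}\le H_{\floor{\gl(S)/\eps}}$, and combining the three bounds gives
\begin{equation*}
 \kk(S)\le H_{\floor{\gl(S)/\eps}}\cdot\frac{\gl(S)}{\gl(S)-2k\eps}.
\end{equation*}

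It then remains to choose $\eps$. Writing $m=\gl(S)/\eps$, the bound reads $H_m\cdot m/(m-2k)$; taking $m\asymp k\log k$ makes $m/(m-2k)=1+O(1/\log k)$ and $H_m=\log m+O(1)=\log k+\log\log k+O(1)$, which gives $\kk(S)\le\log k+\log\log k+5$ once the absolute constant is pinned down; for the finitely many small values of $k$ not covered by this estimate one instead uses the trivial bound $\kk(S)\le k$ from \eqref{es1k}. The assertion $\eff(S)=1/\kk(S)\ge(1-o(1))/\log k$ is then immediate. The one genuine subtlety — and the reason the bound is $\log k+\log\log k$ rather than $\log k$ — is that the discretisation turns the $k$ intervals of $S$ into roughly $k\log k$ points of $\bbZ$: a coarser grid cannot be used, since short intervals of $S$ would be lost, so \refT{TZgreedy} is necessarily applied with $k\log k$ in place of $k$. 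The remaining work is purely bookkeeping: controlling the measure ratio $\gl(S)/\gl(S_\eps)$ and verifying the explicit constant (including the small-$k$ cases).
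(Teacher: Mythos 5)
Your proposal is correct and is essentially the paper's own argument: discretise $S$ on a grid of mesh $\asymp\gl(S)/(k\log k)$, keep only the grid cells wholly inside $S$ (losing measure at most $2k\eps$), transfer the covering problem to the resulting subset of $\bbZ$, and apply \refT{TZgreedy} to a set of roughly $k\log k$ integers, which is exactly where the $\log\log k$ term comes from. The paper normalises $\gl(S)=1$ and takes $\gd=\bigpar{2k(\log k+1)}\qw$, then pins down the constant $5$ by a monotonicity-plus-numerics check for $k\ge 6$ and the trivial bound $\kk(S)\le k$ for small $k$, matching the bookkeeping you defer.
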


\begin{proof}
Recall that $\gl$ denotes the Lebesgue measure.
By homogeneity, we may assume that $\gl(S)=1$.
Fix $\gd>0$ 
and consider the regularly spaced intervals
$J_j\=[j\gd,(j+1)\gd]$ of length $\gd$.
Let $U\=\set{j\in\bbZ:J_j\subseteq S}$ and set
$S'\=\bigcup_{j\in U}J_j\subseteq S$,
so $S'$ is the maximal subset of $S$ consisting of closed intervals
all of whose endpoints are multiples of $\delta$.

Assuming, as we may, that $I_1,\dots,I_k$ are disjoint, we have
$U=\bigcup_{i=1}^k U_i$, where $U_i=\set{j\in\bbZ:J_j\subseteq I_i}$.
Since $I_i\setminus\bigcup_{j\in U_i} J_j$ consists of at most two
intervals (at each end of $I_i$) of lengths $<\gd$, we have
$|U_i|\gd>\gl(I_i)-2\gd$ and thus
\begin{equation}
  \label{jb}
\gl(S')=|U|\gd=\sumik|U_i|\gd
>\sumik\gl(I_i)-2k\gd=\gl(S)-2k\gd
=1-2k\gd.
\end{equation}
We will choose $\gd<1/(2k)$, so $S'\neq\emptyset$.

Since the intervals $J_j$ have equal lengths and form a partition of
$\bbR$ (except for common endpoints), it
is obvious that 
$\nn(S',n\gd)\le\nn(U,n)$. Taking limits
(recalling \eqref{tz} and \eqref{tr}),
it follows that $\nn(S',\bbR)\le\nn(U,\bbZ)/\gd$, so
$\kk(S',\bbR)\le\kk(U,\bbZ)$.
Consequently, \refT{TZgreedy} yields
\begin{equation}
  \kk(S',\bbR)\le\kk(U,\bbZ)\le\log|U|+1
=\log\frac{\gl(S')}{\gd}+1.
\end{equation}
Since $S'\subseteq S$, we have $\nn(S,\bbR)\le\nn(S',\bbR)$
and thus, using $\gl(S')\le\gl(S)=1$ and \eqref{jb},
\begin{equation}
  \kk(S,\bbR)\le\frac{\gl(S)}{\gl(S')}\kk(S',\bbR)
\le
\frac1{\gl(S')}\Bigpar{\log\frac{\gl(S')}{\gd}+1}
\le
\frac1{1-2k\gd}\Bigpar{\log\frac{1}{\gd}+1}.
\end{equation}
Choosing $\gd=\xpar{2k(\log k+1)}\qw$ (which is close to optimal), this yields
\begin{align}
  \kk(S,\bbR)
&\le
\frac{\log k+1}{\log k}\bigpar{\log 2+\log k+\log(\log k+1)+1}
\label{rk1}
\\
&\le
\Bigpar{1+\frac{1}{\log k}}
\Bigpar{\log k+\log\log k+1+\log2+\frac1{\log k}}.
\label{rk2}
\end{align}
This bound is evidently $\log k+\log\log k+O(1)$.
To be more precise, it is easily verified that the difference between the 
final bound in
\eqref{rk2} and $(\log k+\log\log k)$ is decreasing, and a numerical
calculation verifies that this difference is smaller than $5$ when $k=6$, which
completes the proof for $k\ge6$. The trivial bound $\kk(S)\le k$
is enough when $k\le7$.
\end{proof}

Together, Theorems \ref{th_abc} and \ref{TRgreedy} show that
$(1-o(1))/\log k\le \gb_k\le\gam_k\le\ga_k\le1$.
By \refT{Ta}, the asymptotics are the same for all three quantities:
$\gb_k\sim\gam_k\sim\ga_k\sim 1/\log k$ as $k\to\infty$.

\section{Two intervals in $\bbR$}\label{SR2}

Let us consider the case $k=2$, when $S=I_1\cup I_2$ is a union of two
closed intervals. We may assume that the intervals are disjoint, since
otherwise $S$ is an interval and thus $\eff(S)=1$.

\begin{theorem}
  \label{TR2}
  \begin{thmenumerate}
\item
If $S=I_1\cup I_2$ where $I_1$ and $I_2$ are two closed intervals,
then $\eff(S)> 2/3$.
\item
If in addition $I_1$ and $I_2$ have the same length, then
$\eff(S)> 3/4$.
  \end{thmenumerate}
\end{theorem}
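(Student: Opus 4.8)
The plan is, for each admissible $S$, to write down an explicit \emph{periodic} covering $\cT$ and bound $\kk(\cT)$; since $\kk(S)\le\kk(\cT)$ this gives $\eff(S)=1/\kk(S)\ge\eff(\cT)$. First I would dispose of trivial cases: if $I_1$ and $I_2$ meet then $S$ is a single interval and $\eff(S)=1$, so assume they are disjoint. Translating and, if necessary, reflecting $\bbR$, write $I_1=[0,a]$ and $I_2=[b,b+c]$ with $0<a\le c$, $b>a$, and set $g\=b-a>0$ (the length of the gap); for part (ii) we additionally have $a=c$.

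For part (i) in the range $c>2a$ nothing new is needed: the bound \eqref{eai} already gives $\eff(S)\ge c/(a+c)>2/3$. So the substance is the range $a\le c\le 2a$ (which also contains part (ii), where $c=a$). Here I would take $N\=\lceil 1+g/a\rceil$, an integer with $N\ge 2$, and let $\cT$ be the family of translates $S+ja$ for $0\le j<N$, repeated with period $L\=Na+g+c$. The geometric core is checking that $\cT$ covers $\bbR$: the intervals $I_1+ja=[ja,(j+1)a]$, $0\le j<N$, tile $[0,Na]$; the intervals $I_2+ja$, $0\le j<N$, cover $[b,\,b+(N-1)a+c]$, because consecutive ones overlap or abut (this uses $c\ge a$); and these two blocks overlap since $b\le Na$, which is exactly the defining inequality $N\ge 1+g/a$. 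Hence one period of $\cT$ covers $[0,L]$, so $\cT$ covers $\bbR$, and $\kk(\cT)=N(a+c)/L=N(a+c)/(Na+g+c)$.

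It then remains to estimate this fraction, using only $N\ge 2$ and the strict inequality $g>(N-2)a$ coming from $N-1<1+g/a$. For part (i), when $c\le 2a$,
\[
2N(a+c)-3(Na+g+c)=(2N-3)c-Na-3g\le(2N-3)\cdot 2a-Na-3g=(3N-6)a-3g<0,
\]
using in turn $2N-3\ge1>0$, then $c\le 2a$, then $g>(N-2)a$; hence $\kk(\cT)<3/2$ and $\eff(S)>2/3$. For part (ii), where $c=a$, the same covering gives $\kk(\cT)=2Na/((N+1)a+g)$, and $(N+1)a+g>(2N-1)a$ yields $\kk(\cT)<2N/(2N-1)\le 4/3$ since $N\ge2$; hence $\eff(S)>3/4$.

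The one point needing genuine care is the case split at $c=2a$: the naive ``stack the long intervals'' covering degrades to efficiency exactly $c/(a+c)=2/3$ there, so it does \emph{not} prove the strict inequality, and one must pass to the interleaved cover $\cT$ (valid for all $c\le 2a$, including $c=2a$), keeping the ceiling bookkeeping around $N=\lceil 1+g/a\rceil$ precise enough that every inequality above stays strict. Everything else is routine: the verification that $\cT$ covers $\bbR$, and the two short displayed computations.
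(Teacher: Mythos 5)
Your proof is correct, and it follows the same overall strategy as the paper (exhibit explicit periodic coverings and bound their multiplicity), but with a different and somewhat more economical case decomposition for part (i). The paper handles part (i) with three separate constructions: ignoring the short interval (your $c>2a$ case), a two-translate construction valid only when the gap is at most the shorter length, and a construction that first translates $S$ by its diameter and then fills the residual gaps; which one wins depends on the parameters, and the verification that every $(a,c,g)$ falls to at least one of them takes a small argument. You instead observe that a single interleaved family -- translating $S$ by multiples of the \emph{shorter} length $a$, $N=\ceil{1+g/a}$ times per period -- covers $\bbR$ whenever $c\ge a$, and that the resulting multiplicity $N(a+c)/(Na+g+c)$ is strictly below $3/2$ throughout the complementary range $c\le 2a$; your construction is exactly the paper's Method IV (used there only for part (ii), where $c=a$) extended to unequal lengths. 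This buys a cleaner two-case split and makes part (ii) an immediate specialization rather than a separate method; your strict inequalities $g>(N-2)a$ and $2N-3\ge 1$ are handled correctly, including the delicate boundary $c=2a$ where the naive bound \eqref{eai} gives only $2/3$. The only (standard) point left implicit in both your write-up and the paper is that a periodic covering of density $\nn(\cT)$ yields $\nn(S)\le\nn(\cT)$ via $\nn(S,x)\le\nn(\cT)x+O(1)$, so $\eff(S)\ge\eff(\cT)$; this is fine.
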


\begin{proof}
We shall prove the inequalities by giving explicit constructions of
coverings with these efficiencies. As we do not have
a single covering algorithm, we shall consider
different constructions for different cases.

Let $a\=\gl(I_1)$ and $b\=\gl(I_2)$ be the lengths of the
intervals, and let $c>0$ be the gap between them. By symmetry we may
assume that $a\ge b>0$ and that $I_1$ lies to the left of $I_2$; we
also assume that the left endpoint of $I_1$ is 0. Thus, $I_1=[0,a]$ and
$I_2=[a+c,a+c+b]$. 

\mm{I}
Ignore $I_2$ and use only $I_1$ to cover by $\cT=\set{S+na:n\in\bbZ}$.
Clearly, $\nn(\cT)=\nn(I_1)=a\qw$ and thus $\kk(S)\le\kk(\cT)=(a+b)/a$
and,
as in \eqref{eai},
\begin{equation}
\eff(S)\ge  \eff(\cT)=\frac1{\nn(\cT)\gl(S)} 
= \frac{a}{a+b}.
\end{equation}

\mm{II}
Assume $c\le b$. Then $S\cup(S+b)=[0,a+c+b+b]$
is an interval of length $a+2b+c$. 
We may thus cover $\bbR$ by
$\cT\=\set{S+ib+j(a+2b+c):i=0,1,\;j\in\bbZ}$.
This covering is periodic, with period $a+2b+c$, and 2 translates of
$S$ begin in each period. Hence
$\nn(\cT)=2/(a+2b+c)$, and the
efficiency is
\begin{equation}
\eff(S)\ge  \eff(\cT)=\frac1{\nn(\cT)\gl(S)} 
= \frac{a+2b+c}{2(a+b)},
\qquad c\le b.
\end{equation}

\mm{III}
Consider first the translates
\set{S+i(a+b+c):i\in\bbZ}.
Since the right endpoint of $S$ equals the left endpoint of
$S+(a+b+c)$, these translates match up perfectly, and the union
$S'\=\bigcup_{i\in\bbZ}(S+i(a+b+c))$ consists of an infinite sequence
of intervals of length $a+b$ with gaps of length $c$ in between.

Let $y\=c/(a+b)$ and let $m\=\ceil y$. Then
$\bigcup_{j=0}^m(S'+j(a+b))=\bbR$, and thus 
$\cT\=\set{S+i(a+b+c)+j(a+b):i\in\bbZ,\,j=0,\dots,m}$ is a covering of
$\bbR$.
This covering is periodic, with period $a+b+c$, and $m+1$ translates of
$S$ begin in each period. Hence $\nn(\cT)=(m+1)/(a+b+c)$ and the
efficiency is
\begin{equation}
\eff(S)\ge  \eff(\cT)=\frac1{\nn(\cT)\gl(S)} 
= \frac{a+b+c}{(m+1)(a+b)}
=\frac{y+1}{m+1}
=\frac{y+1}{\ceil{y+1}}.
\end{equation}

To complete the proof of the first part,
we shall show for any positive $a,b,c$ with $a\ge b$, at least one of
these methods yields $\eff(S)\ge\eff(\cT)>2/3$.
Firstly, if $b<a/2$, then  \mmx I will do. Further, $x/\ceil x>2/3$
for all $x>4/3$, so \mmx{III} will do when $y>1/3$.
This leaves only the case $a\le 2b$ and $y\le1/3$. In this case
$c=y(a+b)\le(a+b)/3\le b$, so \mmx{II} applies, and yields
\begin{equation*}
\eff(S)
\ge
\frac{a+2b+c}{2(a+b)}  
>
\frac{a+2b}{2a+2b}
=1-\frac{a}{2a+2b}
\ge2/3,
\end{equation*}
using again $2b\ge a$. This proves that $\eff(S)>2/3$ in all cases,
which proves (i).

For (ii), we  specialize to $a=b$. We shall need yet another covering algorithm.

\mm{IV}
Let $z\=c/a$ and let $m\=\ceil z\ge1$. 
Consider first the translates \set{S+ia:i=0,\dots,m}.
It is easy to see that $S'\=\bigcup_{i=0}^m(S+ia)$ is an interval of
length $a+c+(m+1)a=(m+2)a+c$. 
We may thus cover $\bbR$ by
$\cT\=\set{S+ia+j((m+2)a+c):i=0,\dots,m,\;j\in\bbZ}$.
This covering is periodic, with period $(m+2)a+c$, and $m+1$ translates of
$S$ begin in each period. Hence $\nn(\cT)=(m+1)/((m+2)a+c)$ and the
efficiency is
\begin{equation*}
  \begin{split}
\eff(S)&\ge  \eff(\cT)=\frac1{\nn(\cT)\gl(S)} 
= \frac{(m+2)a+c}{2(m+1)a}
=\frac{m+2+z}{2m+2}
=\frac{m+2+z}{\ceil{m+2+z}}
\\&
>3/4,	
  \end{split}
\end{equation*}
since $m\ge1$ and $x/\ceil x>3/4$ for all $x>3$.
\end{proof}

We next give two examples showing that the bounds
given in \refT{TR2} are indeed best possible.

\begin{example}\label{ER1}
Let $S=[0,2]\cup[3+\weps,4+\weps]$ for $\weps>0$; thus $\gl(S)=3$.
Obviously, we can ignore the shorter interval in $S$ and tile
$\bbR$ by translates of the larger one; this shows
that $\nn(S)\le 1/2$ and so $\eff(S)=1/(3\nn(S))\ge 2/3$.
\refT{TR2} shows that we can do a little better
(for example using method (iii) in the proof above), i.e.,
that $\eff(S)>2/3$. However, as we shall now see, for $\weps$
small we cannot do much better: the shorter interval is essentially useless.

Suppose that $\{t_i+S\}_{i=1}^N$ is a covering of a (long) interval $[0,x]$.
We may assume that $t_1<t_2<\cdots<t_N$. Let $k_0=\inf\{k:t_k\ge 0\}$,
and let $k_1=\sup\{k:t_k < x-(4+\weps)\}$.

Suppose that $k_0\le k\le k_1$. Then the
`gap' $I=(t_k+2,t_k+3+\weps)$ in $t_k+S$ lies within $[0,x]$, and is
thus covered by the sets $t_i+S$.

We consider three cases. Suppose first that $I$ is covered entirely
by `later' translates, i.e., sets of the form $t_i+S$ with $i>k$.
Then the left endpoint of $I$ lies in the first interval $t_i+[0,2]$ of
some such $t_i+S$, so we have 
\begin{equation}\label{case1}
 t_{k+1}\le t_i \le t_k+2.
\end{equation}
Suppose next that $I$ is covered partly by some $t_i+S$, $i<k$,
and partly by some $t_j+S$, $j>k$. Then two such sets must meet,
and
\begin{equation}\label{case2}
 t_{k+1} - t_{k-1} \le t_j-t_i  \le 4+\weps.
\end{equation}
Finally, suppose that $I$ is covered entirely by sets $t_i+S$, $i<k$.
In any such set, only the shorter interval can meet $I$, so it takes
at least two such sets $t_i+S$, $t_j+S$ to cover $I$, and assuming
that $i<j$ we have
\begin{equation*}
 t_{k-2}\ge t_i \ge (t_k+2)-(4+\weps),
\end{equation*}
so $t_k\le t_{k-2}+2+\weps$.
But we always have $t_{k+1}\le t_k+\sup S-\inf S=t_k+4+\weps$, so in this case
\begin{equation}\label{case3}
 t_{k+1} \le t_{k-2} +6+2\weps.
\end{equation}

Combining \eqref{case1}, \eqref{case2} and \eqref{case3},
we see that in all cases there is some $1\le i\le 3$ such that
$t_{k+1}\le t_{k+1-i} + i(2+\weps)$, 
and it follows by induction that $t_k\le (2+\weps)(k-k_0+2)$ for all $k\in [k_0-2,k_1+1]$.
Since $t_{k_1+1}\ge x-(4+\weps)=x-O(1)$,
it follows that
the number $N$ of translates satisfies
$N\ge k_1-k_0+1\ge x/(2+\weps)-O(1)$.
Since $x$ was arbitrary we have $\nn(S)\ge 1/(2+\weps)$
and $\eff(S)=1/(\gl(S)\nn(S))\le (2+\weps)/3$.
\end{example}

The next example shows that for a union $S$ of two intervals of equal
lengths, the bound $\eff(S)>3/4$ given by \refT{TR2} is best possible.

\begin{example}\label{ER2}
Let $S=[0,1]\cup [1+\weps,2+\weps]$ where $\weps>0$.
Suppose that $S_1,\ldots,S_N$ are translates of $S$ covering $I=[0,x]$,
and hence covering $I'=[2+\weps,x-(2+\weps)]$.
For $y\in I'$ let $f(y)$ be the number of times $y$ is covered, so
$f(y)=\sum_i \chi_{S_i}(y)$, where $\chi_A$ denotes the characteristic
function of a set $A$.

Set
\begin{equation*}
 w(S_i) = \int_{S_i\cap I'} \frac{\dd y}{f(y)} = \int_{I'} \frac{\chi_{S_i}(y)}{f(y)}\dd y.
\end{equation*}
Note that $w(S_i)$ may be thought of as the amount of `covering work' done by $S_i$,
where the work in covering a point multiple times is split between the sets covering it.
Then
\begin{equation}\label{wtot}
 \sum_{i=1}^N w(S_i) = \int_{I'} \frac{\sum_i \chi_{S_i}(y)}{f(y)} \dd y = \int_{I'} 1\dd y = x-4-2\weps.
\end{equation}

Suppose that $w(S_i)>0$, so $S_i$ meets $I'$ and hence $S_i$ lies entirely within $[0,x]$.
Let $I_i$ be the gap of length $\weps$ in $S_i$. Then $I_i$ is covered by $\bigcup_{j\ne i} S_j$.
In particular, some $S_j$ meets $I_i$. But then $S_j$ contains a unit interval $J$
meeting $I_i$. Then $J\subset S_i\cup I_i$, so
$\gl(S_j\cap S_i)\ge \gl(J\cap S_i)=1-\gl(J\cap I_i)\ge 1-\weps$.
Since $f(y)\ge 2$ on $S_j\cap S_i\cap I'$, 
we have $\chi_{I'}(y)/f(y)\le 1/2$ on a subset of $S_i$ of measure at least $1-\weps$.
Since $\chi_{I'}(y)/f(y)\le 1$ on all of $S_i$, 
it follows that
\begin{equation*}
 w(S_i) = \int_{S_i} \frac{\chi_{I'}(y)}{f(y)}\dd y \le \gl(S_i)-(1-\weps)/2 =(3+\weps)/2.
\end{equation*}
Since $S_i$ was any translate with $w(S_i)>0$,
referring back to \eqref{wtot} we see that
$N(3+\weps)/2 \ge (x-4-2\weps)=x-O(1)$.
Since $x$ was arbitrary, it follows that $\nn(S)\ge 2/(3+\weps)$,
so $\eff(S)=1/(\nn(S)\gl(S))\le (3+\weps)/4$.
\end{example}

Together, \refT{TR2} and Examples \refand{ER1}{ER2} establish the result
we promised earlier.
\begin{corollary}\label{CR2}
For $k=2$ the quantities defined in \eqref{beta} and \eqref{gamma} satisfy
$\gb_2=\frac23$ and $\gam_2=\frac34$.
\nopf
\end{corollary}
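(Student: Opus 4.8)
The plan is to assemble the statement directly from results already established: \refT{TR2} supplies the lower bounds and Examples~\refand{ER1}{ER2} supply the matching upper bounds. Recall that $\gb_2$ is the infimum of $\eff(I_1\cup I_2)$ over all pairs of closed intervals, and $\gam_2$ the infimum over all pairs of closed intervals of equal length.

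First I would record the lower bounds. By \refT{TR2}(i), every union $S=I_1\cup I_2$ of two closed intervals satisfies $\eff(S)>2/3$; taking the infimum over all such $S$ gives $\gb_2\ge 2/3$. Similarly, \refT{TR2}(ii) gives $\eff(S)>3/4$ whenever, in addition, $I_1$ and $I_2$ have the same length, so that $\gam_2\ge 3/4$. (Recall from the remark following the definitions \eqref{beta}--\eqref{gamma} that restricting to closed intervals does not change the infima, so this is consistent with the definitions of $\gb_2$ and $\gam_2$.)

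Next I would obtain the matching upper bounds from the two extremal examples. For $\gb_2$: \refE{ER1} shows that for each $\weps>0$ the set $S=[0,2]\cup[3+\weps,4+\weps]$, a union of two closed intervals, has $\eff(S)\le(2+\weps)/3$; hence $\gb_2\le(2+\weps)/3$ for every $\weps>0$, and letting $\weps\downto 0$ gives $\gb_2\le 2/3$. For $\gam_2$: \refE{ER2} shows that for each $\weps>0$ the set $S=[0,1]\cup[1+\weps,2+\weps]$, a union of two closed intervals \emph{of equal length}, has $\eff(S)\le(3+\weps)/4$; hence $\gam_2\le(3+\weps)/4$ for every $\weps>0$, and letting $\weps\downto 0$ gives $\gam_2\le 3/4$.

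Combining the two directions yields $\gb_2=2/3$ and $\gam_2=3/4$. There is no real obstacle here — all the substantive work lies in the four explicit covering constructions of \refT{TR2} and in the two counterexamples — so the only point warranting care is that, because the inequalities in \refT{TR2} are strict, these are infima that are approached but never attained; that is exactly why \refE{ER1} and \refE{ER2} are phrased with a parameter $\weps>0$ rather than exhibiting a single extremal set.
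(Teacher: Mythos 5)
Your proposal is correct and is exactly the argument the paper intends: the corollary is stated without proof precisely because it follows by combining the lower bounds of \refT{TR2} with the upper bounds from Examples \refand{ER1}{ER2}, letting $\weps\downto0$. Your handling of the closed-interval convention and the observation that the infima are not attained are both accurate.
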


\section{$G=\bbZ^d$}\label{SZd}

In this and the next section we briefly consider coverings of higher dimensional
spaces, starting with $\bbZ^d$. In dimensions two and higher the notion 
of the density of a general set $T\subset \bbZ^d$ is a little slippery:
it is naturally defined as the limit (if it exists) of the fraction of a large
`ball' that lies in $T$, but the existence and value of the limit
may depend on the norm chosen to define the balls. This ambiguity
does not arise for \emph{periodic} sets $T$, i.e., sets invariant
under translation through the elements of some lattice $\cL\subset\bbZ^d$.
If $S$ is finite and $T+S=\bbZ^d$, then for any reasonable notion of density
there are periodic sets $T'$ with density arbitrarily close to that of $T$
such that $T'+S=\bbZ^d$, so the covering density of a finite set $S$ turns
out to make very good sense. Rather than discuss this further, we simply
pick one explicit definition.

Given a finite, non-empty subset $S$ of $\bbZ^d$, $d\ge 2$, let
$\nn(S,[n]^d)$ be the smallest number of translates of $S$ that cover
the cube $[n]^d$.
For $n$ large (greater than the maximum difference between corresponding
coordinates of points in $S$) we may regard $S$ as a subset
of the discrete torus $\bbZ_n^d$. Then \eqref{l1} extends to
$\nn(S,\bbZ_n^d)\le \nn(S,[n]^d)$. Moreover, the proof of \refL{LZn} is easily
modified to give the following result.

\begin{lemma}
If $S\subset \bbZ^d$ is finite and non-empty, then the limits
\begin{equation*}
 \lim_{n\to\infty} \frac{\nn(S,[n]^d)}{n^d} \qquad\hbox{and}\qquad
 \lim_{n\to\infty} \frac{\nn(S,\bbZ_n^d)}{n^d}
\end{equation*}
exist and are equal.
\nopf
\end{lemma}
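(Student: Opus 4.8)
The plan is to follow the proof of \refL{LZn} essentially verbatim, replacing intervals by cubes. Write $w=w(S)$ for the largest, over the $d$ coordinate directions, of the difference between the maximal and minimal value of that coordinate among the points of $S$; translating $S$ (which affects none of the quantities considered) we may assume $S\subseteq[0,w]^d$, and we set $g(n)\=\nn(S,[n]^d)$.

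First I would establish that $\limn g(n)/n^d$ exists and equals $\inf_{n\ge1}g(n)/n^d$; call this common value $\nn(S,\bbZ^d)$. This is a Fekete-type argument: $g$ is nondecreasing (since $[n]^d\subseteq[n']^d$ for $n\le n'$), and $g(mn)\le m^d g(n)$ because $[mn]^d$ is a disjoint union of $m^d$ translates of $[n]^d$, each coverable by $g(n)$ translates of $S$. Hence, given $n_0$, for $n\ge n_0$ we may take $m=\ceil{n/n_0}$ and obtain $g(n)\le g(mn_0)\le m^d g(n_0)$, so $g(n)/n^d\le(1+n_0/n)^d\,g(n_0)/n_0^d$; letting $n\to\infty$ gives $\limsup_n g(n)/n^d\le g(n_0)/n_0^d$ for every $n_0$, and the claim follows.

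Next come the two inequalities relating $[n]^d$ to $\bbZ_n^d$, the exact analogues of \eqref{l1} and \eqref{l1b}. For $n>w$, reducing a minimum-size cover of $[n]^d$ modulo $n$ coordinatewise gives $\nn(S,\bbZ_n^d)\le g(n)$, hence $\limsup_n\nn(S,\bbZ_n^d)/n^d\le\nn(S,\bbZ^d)$. For the other direction I would prove the cube version of \eqref{l1b}, namely $\nn(S,[mn-w]^d)\le m^d\,\nn(S,\bbZ_n^d)$ for $n>w$ and $m$ large. As in \refL{LZn}, one lifts an optimal cover $\{\hat t+S:\hat t\in\hT\}$ of $\bbZ_n^d$ to a set $T\subseteq[0,n-1]^d$ of representatives of its translation vectors, so $|T|=\nn(S,\bbZ_n^d)$ and $T+S\subseteq[0,n+w-1]^d$; a coordinatewise rerun of the one-dimensional argument shows that $T+S\supseteq[w,n-1]^d$, and that any $x$ having some coordinate $<w$ lies in $T+S$ after adding a vector $n\eps$ with $\eps\in\{0,1\}^d$ and $\eps_i=0$ whenever $x_i\ge w$. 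The tiled family $T_m\=T+\{(nj_1,\dots,nj_d):0\le j_i\le m-1\}$, of size $m^d|T|$, then covers $[w,mn-1]^d$, a cube of side $mn-w$, which gives the asserted inequality. Dividing by $(mn-w)^d$, using $\nn(S,\bbZ^d)=\inf_{n'}g(n')/n'^d$, and letting $m\to\infty$ yields $\nn(S,\bbZ^d)\le\nn(S,\bbZ_n^d)/n^d$ for every $n>w$, hence $\nn(S,\bbZ^d)\le\liminf_n\nn(S,\bbZ_n^d)/n^d$. Together with the reverse $\limsup$ bound this squeezes $\nn(S,\bbZ_n^d)/n^d\to\nn(S,\bbZ^d)=\limn g(n)/n^d$, as required.

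The only point needing genuine attention is the cube analogue of \eqref{l1b}: in dimension $d$ the ``wraparound correction'' $\eps$ is a $0/1$-vector rather than a single bit, so to check that a point $x\in[w,mn-1]^d$ falls inside one of the $m^d$ shifted copies of $T$ one must argue coordinate by coordinate, reducing each $x_i$ modulo $n$ to $x_{0,i}\in[0,n-1]$ with $x_i=x_{0,i}+j_in$. The crucial observation, exactly as for $d=1$, is that a coordinate $i$ that genuinely wraps (i.e.\ $\eps_i=1$, which forces $x_{0,i}<w\le x_i$ and hence $x_i\ne x_{0,i}$) automatically has $j_i\ge1$, so the required shift index $j_i-\eps_i$ is still in the admissible range $\{0,\dots,m-1\}$. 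Everything else is a direct transcription of the proof of \refL{LZn}.
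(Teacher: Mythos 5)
Your proof is correct and is exactly the argument the paper has in mind: the paper omits the proof, stating only that \refL{LZn} is ``easily modified'', and you carry out that modification faithfully (Fekete-type argument via $g(mn)\le m^d g(n)$ plus monotonicity, reduction mod $n$ for one inequality, and the lifted-and-tiled cover with the $\{0,1\}^d$ wraparound correction for the other). The one genuinely $d$-dimensional point --- that a coordinate with $\eps_i=1$ forces $j_i\ge1$, so the shift index stays in range --- is handled correctly.
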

The \emph{covering density} $\nn(S)$ of $S$ is defined to be
the common value of the limits above and, as in \refS{SZ}, the \emph{efficiency}
of $S$ is
\begin{equation}\label{pz2}
 \eff(S)\=1/(\nn(S)|S|)=\lim_{n\to\infty}\eff(S,\bbZ_n^d).
\end{equation}

By \eqref{g2f}, if $|S|=k$ then $\nn(S)\le H_k/k$ and $\eff(S)\ge 1/H_k$,
just as for subsets of $\bbZ$.

One interesting case is when $S$ is a product set: suppose that $S=S_1\times S_2$
with $S_1\subset \bbZ^{d_1}$ and $S_2\subset \bbZ^{d_2}$. Then \eqref{pz2}
and \refL{Lprod} show that
\begin{equation}\label{pz3}
 \eff(S_1)\eff(S_2) \le \eff(S_1\times S_2)\le \min\{\eff(S_1),\eff(S_2)\}.
\end{equation}
The argument in \refE{Eprod} and \refT{Ta} show that the first inequality may be strict.

Turning to sets of a given size, let $\ga_{k,d}=\inf\{\eff(S): S\subset\bbZ^d,\,|S|=k\}$.
Taking $S_2=\{0\}$ in \eqref{pz3}, it follows that
$\ga_{k,1}\ge \ga_{k,2}\ge \cdots$.

\begin{question}\label{dim}
Is $\ga_{k,d}$ equal to $\ga_k$ ($=\ga_{k,1}$) for $d>1$?
\end{question}

Note that the lower bound $\ga_{k,d}\ge 1/H_k$ holds for every $d$, as a consequence of 
\eqref{g2fe} and \eqref{pz2}.

In general, subsets of $\bbZ^d$ seem much harder to handle than subsets of $\bbZ$.
For example, unlike in the case $d=1$ discussed in \refR{Ralg},
we do not know of an algorithm that given as input a finite set $S\subset\bbZ^d$
calculates $\eff(S)$;
indeed, it may well be the case that no such algorithm exists. If we
consider a finite set $S_1,\ldots,S_k$ of finite subsets of $\bbZ^d$,
and simply ask whether there is a partition of $\bbZ^d$ into sets each of which
is a translate of some $S_i$, then this a form of the classical
domino tiling problem discussed by Wang~\cite{Wang}. Usually, one considers
square tiles with coloured edges, and asks whether $\bbZ^2$ can be tiled by translates
of a given set of such \emph{Wang tiles} so that the colours on adjacent edges match;
it is easy to code any problem of this form with suitable sets $S_i$.
As shown by Berger~\cite{Berger},
this tiling problem is undecidable; a key 
related fact is the existence of sets of tiles which do tile $\bbZ^2$,
but not in a periodic way.

Returning to our problem, it may be
that the question of deciding whether a given $S$ has a covering
of a certain efficiency is undecidable. Moreover, it seems
to be still open to determine whether there is always
an optimal periodic covering, or indeed whether $\eff(S)$ is necessarily rational.
In fact, even for the special case of tilings,
i.e., sets with $\eff(S)=1$, the corresponding questions
seem to be still open; see, for example,
Rao and Xue~\cite{RaoXue}.
There are some partial results: for example,
Wijshoff and van Leeuwen~\cite{WL}
showed that if $S\subset\bbZ^2$ is connected and contains no `holes' (so $S$
corresponds to a polyomino), then $S$ tiles $\bbZ^2$ if and only if it
does so in a periodic manner, and gave an algorithm to determine whether such 
a tiling exists (see also Beauquier and Nivat~\cite{BN}). In a different
direction, Schmidt and Tuller~\cite{SchmidtTullerII} give a criterion
for deciding whether a set $S$ with $|S|=4$ tiles $\bbZ^d$.


\section{$G=\bbR^d$}\label{SRd}

Finally, one can also consider subsets $S$ of $\bbR^d$, in the same 
way as in \refS{SR}. Here, many classes of subsets seem natural to consider.
For example, as noted in the introduction,
the case when $S$ is a ball is a classical problem, and the case where $S\subset\bbR^2$ is convex
polygon has also received much attention; see~\cite{FT53,FT72,Rogers}.
As in the previous section, there is an apparent difficulty defining
the covering density of $S$, but for the bounded sets $S$ we consider,
one need only consider periodic coverings, and the problem disappears;
we omit the details.

Another interesting case, generalizing that discussed in \refS{SRgreedy}, is when 
$S\subset\bbR^2$ is a union of $k$  rectangles $R_i$ of the
form $R_i=I_i\times J_i$, which we may or may not insist are disjoint.
(Of course, this extends to $d\ge 3$.)
The discretization argument used in the proof of \refT{TRgreedy} does not work
in this context: even if they have the same area, the rectangles
may have very different aspect ratios, and there may be no `grid'
that approximates all of them well. In fact, we do
not know any lower bound for the efficiency $\eff(S)$ better than the trivial bound $\eff(S)\ge 1/k$,
obtained by ignoring all rectangles in $S$ except one with maximal area.

\begin{question}\label{Q1/k}
Writing $\gb_{k,d}$ for the infimum of $\eff(S)$ over all sets $S\subset \bbR^d$
that are unions of $k$ axis-aligned rectangles, do we have $\gb_{k,d}=1/k$
for all (or any) $k\ge 2$ and $d\ge 2$?
\end{question}

In dimension two, one natural candidate for a very inefficient set $S$ with $k=2$ is the `thin cross' 
$S_L=([-L,L]\times [-1,1]) \cup ([-1,1] \times [-L,L])$ with $L$ large.
However, as noted by Everett and Hickerson~\cite{EH} (whose main purpose
in that paper was to relate suitable coverings of $\bbR^d$ to corresponding ones of $\bbZ^d$), 
$\eff(S_L)\ge 3/4+o(1)$ as $L\to\infty$.
More precisely, they conjectured that when $L=2a+1$ with $a$ an integer
(so $S_L$ is the union of $4a+1$ congruent squares), then
$\kk(S_L)=(4a+1)/(3a+2)$, and proved that $\kk(S_L)$ is at most this large
by giving an example of a covering. Loomis~\cite{Loomis} proved their
conjecture for $a=2$, but in
general it seems to be still open. For a survey of related questions
concerning crosses and `semi-crosses' in various dimensions, see Stein~\cite{Stein_survey}.

Perhaps a better candidate for the union $S$ of two rectangles with $\eff(S)$ small
is the set $S_L'=([1,L]\times [0,1]) \cup ([0,1]\times [1,L])$
formed by two arms of a cross without the central square.
In this case, for large $L$
we do not have a better lower bound on $\eff(S_L')$ than the trivial $\eff(S_L')\ge 1/2$.

We are so far from knowing the answer to \refQ{Q1/k} above that we do not even 
know whether $k\gb_{k,2}$ is bounded as $k\to\infty$. 
One plausible approach to proving that it is bounded is to consider
sets $S$ formed as the union of $k$ rectangles with a common centre
and unit area,
but with very different aspect ratios. But even in this case
we do not know the covering efficiency; it may be that even if the
aspect ratios form a rapidly increasing sequence, $\eff(S)$ is larger than $1/k$
by more than a constant factor as $k\to\infty$.
If so, then, fitting $S$ inside the region $H$ bounded by the hyperbola $|x||y|=1$, 
it would follow that for any $\eps>0$ there is a \emph{periodic} covering $T+H$ of $\bbR^2$
by hyperbolic regions in which $T$ has (upper) density at most $\eps$.
(It is not hard to construct an unrestricted covering in which the density
of $T$, defined as the limit of $(\pi r^2)^{-1}$ times the number of points within a disk of radius $r$,
is zero.)
The question
of whether such periodic coverings exists seems of interest in its own right;
it has the flavour of number theory/approximation
theory, and may well be known, but we have not found a reference.

Of course, one can also consider many other families of sets,
such as unions of rectangles with arbitrary orientations. In general,
it seems that most questions one can ask in dimension two or higher are
rather difficult.

\appendix
\section{}

In this appendix we prove Lemma~\ref{LJ1strong}, and show that Theorem \ref{TJ1} 
follows. Our proof of \refT{TJ1} is simpler 
in the case when $n$ is prime, so
we begin with this case (which suffices to prove \refC{CJ2}) and return to 
the general case later.

To handle the case where $n$ is prime, the following weaker form of 
Lemma \ref{LJ1strong}
suffices; the only difference is that we have omitted the condition $(z,n)=1$
from part (ii).
Recall that $\q x$ denotes the element in $\bbzn$ corresponding to an
integer $x\in\bbZ$.

\begin{lemma}
  \label{LJ1weak}
For every $m\ge1$ and $\gd>0$ there exists $C=C(m,\gd)$ such that
for every $n$ and every $b_1,\dots,b_m\in\bbZ$,
if $\q x_1,\dots,\q x_m$ are any $m$ elements of\/
$\bbzn$, then at least one of the following holds.
\begin{romenumerate}
  \item
There exists a vanishing linear combination
$\sum_{i=1}^m a_i\qx_i =0$ in $\bbzn$ with
integer coefficients $a_1,\dots,a_m$
satisfying $0<\max_i|a_i|\le C$.
\item
There exist $z\in\bbZ$ and $y_1,\dots,y_m\in [0,\gd n]\cap\bbZ$ such
that $z\qx_i-\q b_i=\q y_i$ in $\bbzn$, \ie,
\begin{equation*}
  z x_i-b_i\equiv y_i \pmod n,
\qquad
i=1,\dots,m.
\end{equation*}
\end{romenumerate}
\end{lemma}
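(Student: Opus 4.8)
The plan is to translate both alternatives into statements about a pair of mutually dual lattices and then quote (or reprove) a transference theorem. Lift $S$ to integers $x_1,\dots,x_m$, write $\mathbf x=(x_1,\dots,x_m)$ and $\mathbf b=(b_1,\dots,b_m)$, and consider the full‑rank lattice $\Lambda\=\mathbf x\bbZ+n\bbZ^m\subseteq\bbZ^m$; since $\Lambda\supseteq n\bbZ^m$ and $\Lambda$ depends only on the classes $\q x_i$, this is well defined. The first observation is that conclusion (ii) says exactly that $\Lambda$ meets the box $\mathbf b+[0,\gd n]^m$: a point $z\mathbf x+n\mathbf k\in\Lambda$ lying in that box gives $zx_i-b_i\equiv y_i\pmod n$ with $y_i=(z\mathbf x+n\mathbf k)_i-b_i\in[0,\gd n]\cap\bbZ$, the integrality being automatic because $\Lambda,\mathbf b\subseteq\bbZ^m$, and conversely such a solution produces such a point. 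Hence (ii) holds whenever the covering radius $\mu(\Lambda)$, measured in the $\ell^\infty$ norm, is less than $\tfrac12\gd n$. The second observation is that the dual lattice is $\Lambda^*=\tfrac1n\Lambda^\perp$, where $\Lambda^\perp\=\{\mathbf a\in\bbZ^m:\textstyle\sum_i a_ix_i\equiv0\pmod n\}$: indeed $\mathbf u\in\Lambda^*$ forces $n\mathbf u\in\bbZ^m$ (pair with the $n\mathbf e_j$) and then $\langle n\mathbf u,\mathbf x\rangle\equiv0\pmod n$ (pair with $\mathbf x$). Therefore conclusion (i) — a nonzero $\mathbf a\in\Lambda^\perp$ with $\max_i|a_i|\le C$ — says exactly that $\Lambda^*$ has a nonzero vector of $\ell^\infty$-length at most $C/n$.

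Thus the two alternatives read ``$\lambda_1(\Lambda^*)\le C/n$'' and ``$\mu(\Lambda)<\tfrac12\gd n$'', and it suffices to rule out both failing simultaneously. This is precisely a transference theorem: there is a constant $c_m$ depending only on $m$ with $\mu(\Lambda)\,\lambda_1(\Lambda^*)\le c_m$ for every $m$-dimensional lattice $\Lambda$. (One may quote Banaszczyk's transference theorem, or derive it by hand from $\mu(\Lambda)\le\tfrac12\sum_i|\mathbf b_i|$ for a basis $(\mathbf b_i)$ of $\Lambda$ with $|\mathbf b_i|$ at most a constant times $i\,\lambda_i(\Lambda)$, together with the dual bound $\lambda_m(\Lambda)\,\lambda_1(\Lambda^*)\le m$; the exact constant is irrelevant.) Given this, set $C=C(m,\gd)\=\lceil 2c_m/\gd\rceil$. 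If (i) fails then $\lambda_1(\Lambda^*)>C/n$, hence $\mu(\Lambda)<c_mn/C\le\tfrac12\gd n$, so (ii) holds. The small‑$n$ range needs no separate treatment: if $n\le C$ then $n\mathbf e_1\in\Lambda^\perp$ is a nonzero vector with $\max$-coordinate $n\le C$ (it encodes the trivial relation $n\q x_1=0$), so (i) holds outright; and for $n>C$ one has $\gd n>\gd C\ge2c_m\ge1$, so the boxes genuinely contain integer points and no rounding subtlety arises. Translated back into linear relations among the $\q x_i$, together with the elementary reduction indicated in the text, this also yields \refT{TJ1} when $n$ is prime.

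No single step here is genuinely hard; the only substantive input is the transference inequality $\mu(\Lambda)\,\lambda_1(\Lambda^*)\le c_m$, whose self‑contained proof — via a basis realising the successive minima up to a dimensional factor — is the one routine‑but‑not‑trivial piece of work. Two bookkeeping points deserve care: one must be consistent about $\ell^2$ versus $\ell^\infty$ norms, but since $C$ may depend on $m$ the loss of factors $\sqrt m$ is harmless; and the present argument does not make the multiplier $z$ in (ii) a unit mod $n$, so it does not give \refL{LJ1strong}. For that stronger statement the natural route is to reduce to prime powers via the Chinese Remainder Theorem, running a version of the above argument at each prime power dividing $n$ and arranging $z$ to be a unit there before patching — which is exactly why the prime (and prime‑power) case is cleaner and is dealt with first.
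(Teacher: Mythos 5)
Your proof is correct, but it takes a genuinely different route from the paper's. The paper argues via Fourier analysis on $\bbzn$: it builds a Fej\'er-type kernel $g=n^{-2}\chi_I*\chi_I$ supported on $[0,\gd n]$, forms the tensor product $H$ of its translates by the $b_i$, and shows that $\wXi=\sum_{z}H(z\qx_1,\dots,z\qx_m)\neq0$ forces (ii); Fourier inversion converts $\wXi$ into a sum over relations $\sum_j a_j\qx_j=0$, and if no such relation has $0<\max_i|a_i|\le C$ the main term $(\gd/2)^{2m}$ dominates the tail $\sum_{\bbZ^m\setminus[-C,C]^m}\prod_i(1+|a_i|)^{-2}$, so $\wXi>0$. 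Your argument instead encodes (ii) as the full-rank lattice $\gL=\mathbf x\bbZ+n\bbZ^m$ meeting the box $\mathbf b+[0,\gd n]^m$ and (i) as the dual lattice $\gL^*=n^{-1}\gL^\perp$ having a nonzero vector of $\ell^\infty$-length at most $C/n$, and then invokes the transference inequality $\mu(\gL)\,\lambda_1(\gL^*)\le c_m$; all the identifications check out (integrality of the $y_i$ is automatic, the $n\le C$ case is disposed of by the trivial relation $n\qx_1=0$, and the $\ell^2$ versus $\ell^\infty$ losses are absorbed into $c_m$). What each approach buys: yours is shorter and more conceptual for the weak lemma, at the cost of importing a nontrivial black box (transference), whereas the paper's Fourier argument is self-contained and — this is the decisive point for the paper — extends to \refL{LJ1strong}, where the sum over $z$ is restricted to units modulo $n$ and the extra ingredient is an estimate for the Fourier transform of $\chi_{\bbznx}$ (\refL{LJ4}); as you correctly note, your method does not directly produce $(z,n)=1$, and your suggested CRT patching is plausible but not carried out. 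Since the statement under review is only \refL{LJ1weak}, that limitation is not a gap here.
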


\begin{proof}
We use Fourier analysis. Set $\go:=\exp(2\pi\ii/n)$, and
define the Fourier transform of a function $f:\bbzn\to\bbC$ by 
\begin{equation*}
  \ff f(\q a)=\sum_{\qx\in\bbzn}f(\qx)\go^{ax},
\qquad a\in\bbZ. 
\end{equation*}

If $\gd\ge 1$ then (ii) holds trivially for any $z$.
Provided we choose $C\ge 4/\gd$, as we may,
if $n\le 4/\gd$ then (i) holds trivially with all $a_i$ equal to $n$.
Hence we may assume that $\gd<1$ and $n>4/\gd$.

Let $\ell\=\floor{\gd n/2}$ and $I\=\set{0,\dots,\ell}\subset\bbzn$.
Define $g:\bbzn\to[0,\infty)$ by $g:=n\qww\chi_I*\chi_I$, so
\begin{equation*}
 g(\qx)=n^{-2}\sum_{\q y\in\bbzn}\chi_I(\q x-\q y)\chi_I(\q y).
\end{equation*}
For $i=1,\dots,m$, define $h_i$ by setting
$h_i(\qx)=g(\qx-\q b_i)$.
Note that
\begin{equation}
  \label{j1}
\bigset{x\in\set{0,\dots,n-1}:g(\qx)\neq0}\subseteq[0,2\ell]\subseteq[0,\gd n].
\end{equation}

For every $a\in\bbZ$,
\begin{equation*}
 \bigabs{\ff{\chi_I}(\q a)}
=
\lrabs{\sum_{x=0}^{\ell}\go^{ax}}
=\lrabs{\frac{1-\go^{a(\ell+1)}}{1-\go^a}}
\le \frac2{|1-\go^a|}
=
\frac1{|\sin(\pi a/n)|}.
\end{equation*}
Also,
\begin{equation*}
 \bigabs{\ff{\chi_I}(\q a)}
=
\lrabs{\sum_{x=0}^\ell\go^{ax}}
\le \ell+1
\le n.
\end{equation*}
For all $a\in\bbZ$ with $|a|\le n/2$ we have $|\sin(\pi a/n)|\ge2|a|/n$,
and hence
\begin{equation*}
 \bigabs{\ff{\chi_I}(\q a)}
\le \min\Bigpar{n,\frac{n}{2|a|}}
\le \frac {n}{1+|a|}.
\end{equation*}
Hence, for $|a|\le n/2$,
\begin{equation}
\label{j2}
 \bigabs{\ff{h_i}(\q a)}
=
 \bigabs{\ff{g}(\q a)}
=
 \bigabs{n\qww\ff{\chi_I}(\q a)^2}
\le \frac {1}{(1+|a|)^2}.
\end{equation}
Moreover, 
\begin{equation}
  \label{j0}
\ff{h_i}(0)=\ff{g}(0)=n\qww\ff{\chi_I}(0)^2=n\qww(\ell+1)^2\ge(\gd/2)^2.
\end{equation}
Now define $H:\bbzn^m\to\bbR$ as the tensor product of the $h_i$, so
\begin{equation}
  \label{j3}
H(\qx_1,\dots,\qx_m)\=\prodim h_i(\qx_i).
\end{equation}
By \eqref{j2}, when $|a_i|\le n/2$ for all $i$,
\begin{equation}
\label{j6}
\bigabs{\ff{H}(\q a_1,\dots,\q a_m)}
=
 \lrabs{\prodim\ff{h_i}(\q a_i)}
\le 
\prodim (1+|a_i|)\qww.
\end{equation}

Now fix $\qx_1,\ldots,\qx_m\in\bbzn$, and consider
the sum
\begin{equation}
  \label{j4}
\wXi\=\sum_{z\in \bbzn} H(z\qx_1,\dots,z\qx_m),
\end{equation}
where we abuse notation by writing $\bbzn$ for an
arbitrary subset of $\bbZ$ consisting of one element from each residue
class modulo $n$.

If this sum does not vanish, then at least one term is non-zero, which
means that $h_i(z\qx_i)\neq0$ for some $z$ and all
$i$. Fix such a $z$ and define $y_i$ by  $0\le y_i<n$
and $\q y_i=z\qx_i-\q b_i$. Then 
$g(\q y_i)=g(z\qx_i-\q b_i)=h_i(z\qx_i)\neq0$. Hence, recalling \eqref{j1},
we have $0\le y_i\le\gd n$.
Consequently (ii) holds whenever $\wXi\neq0$.

We compute $\wXi$ using Fourier inversion: 
\begin{equation}
  \label{ja1}
\begin{split}
n^m\wXi 
&=
\sum_{z\in \bbzn}\sum_{\q a_1,\dots,\q a_m\in\bbzn}
\ff H(\q a_1,\dots,\q a_m)\go^{-\sumjm zx_ja_j}
\\&
=
\sum_{\q a_1,\dots,\q a_m\in\bbzn}
\ff H(\q a_1,\dots,\q a_m)
\sum_{z\in \bbzn} \go^{-z\sumjm x_ja_j}.
\end{split}
\end{equation}
The inner sum evaluates to $n$ if $\sumjm x_ja_j\equiv0\pmod n$, and vanishes
otherwise.
Hence, 
representing $\bbzn$ by the interval
$J_n\=\set{z\in\bbZ:-n/2<z\le n/2}$,
\begin{equation}
  \label{j5}
n^{m-1}\wXi 
=
\sum_{(\q a_1,\dots,\q a_m)\in E}
\ff H(\q a_1,\dots,\q a_m),
\end{equation}
where
\begin{equation*}
  E\=\bigset{(a_1,\dots,a_m)\in J_n^m:\sumjm a_j\q x_j=0 \text{ in } \bbzn}.
\end{equation*}
Clearly, $(0,\dots,0)\in E$, and, by \eqref{j0},
\begin{equation}
  \label{j7}
\ff H(0,\dots,0)
=\prodim \ff{h_i}(0)
=\ff{g}(0)^m
\ge(\gd/2)^{2m}.
\end{equation}
If (i) does not hold, then
$(E\setminus\set{(0,\dots,0)})\cap[-C,C]^m=\emptyset$, 
and thus \eqref{j5}, \eqref{j7} and \eqref{j6} yield
\begin{equation}
  \label{j8}
\begin{split}
n^{m-1}\wXi 
&\ge 
\prodim \ff h_i(0) 
-\sum_{E\setminus\set{(0,\dots,0)}}
\lrabs{\ff H(\q a_1,\dots,\q a_m)}	
\\&
\ge
\parfrac{\gd}2^{2m} - \sum_{\bbZ^m\setminus[-C,C]^m} \prodim(1+|a_i|)\qww.
  \end{split}
\end{equation}
Since
\begin{equation}
  \label{jb3}
\sum_{\bbZ^m} \prodim(1+|a_i|)\qww=
\biggpar{\sum_{a=-\infty}^\infty(1+|a|)\qww}^m
<\infty,
\end{equation}
we can choose $C=C(m,\gd)$ such that the last sum in \eqref{j8} is less that
$(\gd/2)^{2m}$, and then \eqref{j8} implies $\wXi>0$.

In summary, choosing $C$ as above, if (i) does not hold,
then $\wXi\neq0$, which we have shown implies (ii).
\end{proof}

We can now prove our weak version of \refT{TJ1}.
\begin{theorem}\label{TJ1prime}
Theorem~\ref{TJ1} holds if the conclusion is restricted to prime values of $n$.
\end{theorem}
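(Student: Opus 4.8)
The plan is to derive \refT{TJ1prime} from \refL{LJ1weak}, applied to the $k-1$ differences $\q x_1-\q x_k,\dots,\q x_{k-1}-\q x_k$ with a carefully chosen shift vector. If \refL{LJ1weak} returns its alternative (i) for these differences, namely integers $a_1,\dots,a_{k-1}$, not all zero, with $\max_i|a_i|$ bounded and $\sum_{i<k}a_i(\q x_i-\q x_k)=0$ in $\bbzn$, then putting $a_k\=-\sum_{i<k}a_i$ gives exactly the conclusion \eqref{TJ1c} of \refT{TJ1}, with $\sum_ia_i=0$ and $\max_i|a_i|\le(k-1)\max_{i<k}|a_i|$. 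So the whole task is to arrange that, when $\kk(S,\bbzn)\ge1+\eps$ and the parameter $\gd$ fed to \refL{LJ1weak} is small enough, the other alternative (ii) cannot occur. The point of restricting to prime $n$ is that the version \refL{LJ1weak} available to us lacks the coprimality clause $(z,n)=1$ of \refL{LJ1strong}; but with the right shift vector, alternative (ii) will force $z\not\equiv0\pmod n$, and primality of $n$ then supplies $(z,n)=1$ for free.

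Concretely, given $k\ge2$ and $\eps>0$ I would fix $\gd=\gd(k,\eps)$ with $\gd<1/k$ and $k\gd<\eps/3$, set $q\=\floor{n/k}$ and $b_i\=iq$ for $1\le i\le k-1$, and let $C_1\=C(k-1,\gd)$ be the constant of \refL{LJ1weak}; the constant in \refT{TJ1prime} will be $C\=\max\set{(k-1)C_1,\,N_0}$ for a threshold $N_0=N_0(k,\eps)$ fixed at the end. Now let $n$ be prime with $n\ge k$ and $\kk(S,\bbzn)\ge1+\eps$. If $n\le C$ the required relation is trivial: take $a_1=n$, $a_2=-n$, $a_3=\dots=a_k=0$, which works since $n(x_1-x_2)\equiv0\pmod n$ (using $k\ge2$). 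So suppose $n>C\ge N_0$, and apply \refL{LJ1weak} with $m=k-1$, the elements $\q x_i-\q x_k$, parameter $\gd$, and the shift $b_i=iq$. Alternative (i) finishes as above, so suppose (ii) holds: there are $z\in\bbZ$ and $y_1,\dots,y_{k-1}\in[0,\gd n]\cap\bbZ$ with $z(x_i-x_k)-b_i\equiv y_i\pmod n$ for $1\le i<k$. If $z\equiv0\pmod n$ this says $n\mid iq+y_i$; but $0<iq\le(k-1)q\le n-n/k$ and $0\le y_i\le\gd n<n/k$, so $0<iq+y_i<n$, a contradiction. Hence $z\not\equiv0$, and as $n$ is prime, $(z,n)=1$.

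Now multiplication by $z$ is an automorphism of $\bbzn$, so the set $A\=zS-z\q x_k\subseteq\bbzn$ satisfies $\kk(A,\bbzn)=\kk(S,\bbzn)$. By construction $A=\set{0}\cup\set{\,\overline{iq+y_i}:1\le i<k}$; since $\gd<1/k$ (and $n\ge N_0$) one checks that $jq+y_j$ for $j=0,\dots,k-1$ (with $y_0\=0$) are distinct integers in $[0,n)$, forming an arithmetic progression of step $q$ perturbed by at most $\gd n$ in each coordinate. Such a set covers $\bbzn$ almost perfectly: with $T\=\set{0,1,\dots,q-1+\ceil{\gd n}}$, the consecutive intervals $jq+y_j+T$ and $(j+1)q+y_{j+1}+T$ overlap because $y_{j+1}-y_j\le\gd n\le\ceil{\gd n}$, the $j=0$ interval is $[0,q-1+\ceil{\gd n}]$, and the $j=k-1$ interval runs past $n-1$ as soon as $\ceil{\gd n}\ge n-kq$, which holds for $n\ge N_0$ since $n-kq<k$. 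Thus $A+T=\bbzn$, so $\nn(A,\bbzn)\le|T|=q+\ceil{\gd n}$ and, since $|A|=k$,
\begin{equation*}
\kk(S,\bbzn)=\kk(A,\bbzn)\le\frac{k\,|T|}{n}\le\frac kn\Bigpar{\frac nk+\gd n+1}=1+k\gd+\frac kn<1+\eps
\end{equation*}
once $N_0$ is taken large enough that $k/n<\eps-k\gd$. This contradicts $\kk(S,\bbzn)\ge1+\eps$, so (ii) is impossible; hence (i) holds and we are done after fixing $N_0=N_0(k,\eps)$ so that all the ``$n$ large'' inequalities above are in force. The general (non-prime) case will go through in exactly the same way with \refL{LJ1strong} in place of \refL{LJ1weak}; primality is used only to pass from $z\not\equiv0$ to $(z,n)=1$.

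The step I expect to be the crux is the choice of the shift $b_i=iq$. It is what makes alternative (ii) incompatible with $z\equiv0$ (so that the multiplication-by-$z$ automorphism is available despite \refL{LJ1weak} not promising coprimality), and simultaneously places $A$ beside the perfectly efficient progression of step $n/k$, which is precisely what lets the explicit cover $A+T$ succeed with $|T|\approx n/k$. The remaining ingredients --- extending $a_1,\dots,a_{k-1}$ to $a_k$, the wrap-around check for $A+T$, and pinning down the value of $N_0$ --- are routine, which is why I have left $N_0$ implicit.
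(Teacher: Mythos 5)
Your argument is correct and follows essentially the same route as the paper's proof: apply \refL{LJ1weak} with $m=k-1$ to the differences $\q x_i-\q x_k$ (the paper instead normalizes $\q x_k=0$, which is equivalent) and shifts $b_i\approx in/k$, convert alternative (i) into \eqref{TJ1c} by setting $a_k=-\sum_{i<k}a_i$, and rule out alternative (ii) by showing $z\not\equiv 0$ (hence, by primality, that $\qx\mapsto z\qx$ is an automorphism) and exhibiting an explicit interval cover of the near-arithmetic-progression $zS-z\qx_k$ of size about $n/k+\gd n$, contradicting $\kk(S,\bbzn)\ge 1+\eps$. The only differences from the paper are cosmetic ($b_i=i\floor{n/k}$ versus $\ceil{in/k}$, and an implicit threshold $N_0$ in place of the paper's explicit choice $\gd=\eps/(2k)$).
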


\begin{proof}
We may assume that $\eps\le1/2$.
Provided we choose $C\ge 2k/\eps$, as we may, if $n\le 2k/\eps$ then
\eqref{TJ1c} holds with $a_1=-a_2=n$ and the other $a_i$ equal to $0$.
We may thus assume that $n>2k/\eps$ (and so $n>8$).

Both \eqref{TJ1c} and $\kk(S,\bbzn)$ are preserved by adding a constant to all elements
of $S$, so 
we may and shall assume that $\qx_k=0$. 
We apply \refL{LJ1weak} with $m\=k-1$, $\gd\=\eps/(2k)$
and $b_i\=\ceil{in/k}$.

Suppose first that \refL{LJ1weak}(i) holds. Then $\sum_{i=1}^m a_i\qx_i=0$ for some $a_i$ with $0<\max_i|a_i|\le C$.
Set $a_k=-\sum_{i=1}^m a_i$;
then, recalling that $\qx_k=0$, we have $\sum_{i=1}^k a_i\qx_i=0$, $\sum_{i=1}^ka_i=0$ and $|a_i|\le mC$, so
our conclusion \eqref{TJ1c} holds (with a different $C$).

Suppose instead that \refL{LJ1weak}(ii) holds. Then, for some $z\in\bbZ$
we have
$z x_i\equiv x_i'\pmod n$ for $i=1,\dots,m=k-1$,
where $x_i'\=b_i+y_i \in [b_i,b_i+\gd n]$.
Now
\begin{equation}\label{j77}
\frac{in}k \le x'_i < \Bigpar{\frac ik+\gd}n+1.
\end{equation}
Furthermore, 
since $\qx_k=0$, we have
$zx_k\equiv0\pmod n$.
Let $x_0'\=0$ and $x'_k\=n$.
For $1\le i\le k$, by \eqref{j77} the gap $x'_i-x'_{i-1}$ is less
than $(1/k+\gd)n+1$.
Hence, if $S'\=\set{\qx'_i:i=1,\dots,k}\subseteq\bbzn$
and $T\=\set{0,1,\dots,\floor{(1/k+\gd)n}}$, then
$T+S'=\bbzn$.
Consequently,
\begin{equation*}
  \nn(S',\bbzn)\le|T|\le 
\Bigpar{\frac 1k+\gd}n+1,
\end{equation*}
and thus
\begin{equation*}
  \kk(S',\bbzn)
=\frac kn  \nn(S',\bbzn)
\le
1+\gd k+k/n
<1+\eps.
\end{equation*}
Since $1/k+\gd\le\tfrac12+\tfrac18<\tfrac23$, 
\eqref{j77} then implies $0<x_1'<2n/3+1<n$ and thus
$zx_i\equiv x'_1\not\equiv0\pmod n$;  hence, $z\not\equiv0\pmod
n$. 
Using the simplifying assumption that $n$ is
prime, the map $\qx\mapsto z\qx$ is an automorphism of $\bbzn$ onto itself.
Hence,
\begin{equation*}
  \kk(S,\bbzn)
=  \kk(zS,\bbzn)
=  \kk(S',\bbzn)
< 1+\eps,
\qedhere
\end{equation*}
contradicting our assumptions and completing the proof.
\end{proof}

To prove \refT{TJ1}, we need Lemma~\ref{LJ1strong} in place 
of Lemma~\ref{LJ1weak}. Recall that the only difference
is that in conclusion (ii), the former allows us to assume
that the greatest common divisor $(z,n)$ of $z$ and $n$ is equal to $1$.
Before proving Lemma~\ref{LJ1strong}, let us first show that \refT{TJ1} follows.

\begin{proof}[Proof of \refT{TJ1}.] 
  The proof is the same as the proof of \refT{TJ1prime} given above, except
  that we use \refL{LJ1strong} instead of \refL{LJ1weak}. In the case
  where \refL{LJ1strong}(ii) holds we have $(z,n)=1$,
 so $\qx\mapsto z\qx$ is still an automorphism of $\bbzn$
even though $n$ need not be prime.
\end{proof}

Let $\bbznx\=\set{x\in\bbzn:(x,n)=1}$ (again regarded as a subset of
$\bbZ$ when convenient), and let $\phi(n)\=|\bbznx|$, be the Euler phi function,
i.e., the number of integers $1\le x<n$ that are coprime to $n$.
Let $\cznx\=\chi_{\bbznx}$ denote the characteristic function of $\bbznx\subset \bbzn$. 

It remains to prove \refL{LJ1strong}; we shall deduce \refL{LJ1strong} from the following lemma, whose proof we in turn postpone.
\begin{lemma}\label{LJ4}
Given $\eta>0$, let $\cP\=\set{p \text{ prime and } p\le1+1/\eta}$ and let
$P\=\prod_{p\in\cP}p$. Then, for every $n\ge1$ and $u\in\bbZ$, either
$Pu\equiv0\pmod n$ or $|\ffznx(\q u)|<\eta\phi(n)$. 
\end{lemma}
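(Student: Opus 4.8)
The plan is to recognise $\ffznx(\q u)$ as a Ramanujan sum and evaluate it explicitly. Writing $\go\=\exp(2\pi\ii/n)$ as in the preceding proofs, we have $\ffznx(\q u)=\sum_{\qx\in\bbznx}\go^{ux}$, the sum of $\go^{ux}$ over integer representatives $x$ coprime to $n$. I would expand the coprimality condition through the M\"obius identity $\cznx(\q x)=\sum_{d\mid (x,n)}\mu(d)$, where $\mu$ is the M\"obius function, interchange the order of summation, and evaluate the resulting inner geometric sums, which vanish unless $(n/d)\mid u$. Setting $g\=(n,u)$ and substituting $e=n/d$, so that the surviving terms are exactly those with $e\mid g$, this leads to the classical closed form
\begin{equation}\label{ramsum}
 \ffznx(\q u)=\sum_{e\mid g}e\,\mu(n/e)=\mu(n/g)\,\frac{\phi(n)}{\phi(n/g)},\qquad g\=(n,u),
\end{equation}
the second equality being a standard identity for Ramanujan sums (see, e.g., \cite{HardyWright}). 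Obtaining \eqref{ramsum} is really the only substantive ingredient; the rest is elementary bookkeeping with greatest common divisors.

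With \eqref{ramsum} in hand, I would next translate the hypothesis on $P$ and $u$. Writing $n=gn_1$ and $u=gu_1$ with $(n_1,u_1)=1$, one has $n\mid Pu\iff n_1\mid Pu_1\iff n_1\mid P$; thus $Pu\equiv 0\pmod n$ holds if and only if $(n/g)\mid P$. Since $P$ is squarefree and its prime divisors are precisely the primes in $\cP$, a divisor $m$ of $n$ satisfies $m\mid P$ exactly when $m$ is squarefree with all prime factors in $\cP$. Hence the negation $Pu\not\equiv 0\pmod n$ means that $n/g$ is either not squarefree, or has a prime factor $q\notin\cP$, i.e.\ a prime factor $q$ with $q>1+1/\eta$.

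Finally I would dispose of the two cases. If $n/g$ is not squarefree then $\mu(n/g)=0$, so \eqref{ramsum} gives $\ffznx(\q u)=0<\eta\phi(n)$, and we are done. If $n/g$ is squarefree with a prime factor $q>1+1/\eta$, then $\phi(n/g)=\prod_{p\mid n/g}(p-1)\ge q-1>1/\eta$, since each factor $p-1$ is a positive integer and one of them equals $q-1$; therefore \eqref{ramsum} gives $|\ffznx(\q u)|=\phi(n)/\phi(n/g)<\eta\phi(n)$, as required. The one point to check carefully is the boundary behaviour: every prime $p\le 1+1/\eta$ belongs to $\cP$ and hence divides $P$, so a prime divisor of $n/g$ that fails to divide $P$ is \emph{strictly} larger than $1+1/\eta$; this is exactly what produces the strict inequality $q-1>1/\eta$ that the bound needs.
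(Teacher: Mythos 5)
Your proof is correct, and it reaches the paper's conclusion by a closely related but genuinely different route. The paper first establishes a local product formula (Lemma~\ref{LJ3}): via the Chinese Remainder Theorem, $\ffznx(\q u)=\prodir\psi_{\pie}(u)$, where each factor $\psi_{p^e}(u)\in\set{0,\,-p^{e-1},\,p^e-p^{e-1}}$ is determined by the $p$-adic divisibility of $u$; Lemma~\ref{LJ4} is then deduced by isolating a single prime $p_i\notin\cP$ with $\pie\ndela u$ and bounding the ratio $|\psi_{\pie}(u)|/(\pie-p_i^{e_i-1})\le 1/(p_i-1)<\eta$. You instead rederive, via M\"obius inversion of the coprimality indicator, H\"older's closed form $\ffznx(\q u)=\mu(n/g)\,\phi(n)/\phi(n/g)$ with $g=(n,u)$, and translate the hypothesis $Pu\not\equiv0\pmod n$ into the statement that $n/g$ is non-squarefree or has a prime factor outside $\cP$. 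The two identities carry the same information --- your formula is what one obtains by multiplying out the paper's local factors --- but the subsequent bookkeeping differs: you read off the exact magnitude ($0$ or $\phi(n)/\phi(n/g)$) and bound $\phi(n/g)$ from below by $q-1$ for a single large prime $q\mid n/g$, whereas the paper never needs the global closed form and works with one prime-power factor at a time. Your gcd reduction ($n\mid Pu\iff(n/g)\mid P$), your characterization of divisors of $P$, and your observation that $q\notin\cP$ gives the \emph{strict} inequality $q-1>1/\eta$ are all correct, so the argument is complete; the only trade-off is that you lean on a classical identity for Ramanujan sums where the paper prefers the self-contained Lemma~\ref{LJ3}.
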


\begin{proof}[Proof of \refL{LJ1strong}.]
Define $H$ as in the proof of \refL{LJ1weak}, but consider now
  \begin{equation*}
\xix\=\sum_{z\in\bbznx}H(z\qx_1,\dots,z\qx_m).
  \end{equation*}
As before, if $\xix\neq0$, then (ii) (in its new stronger version)
holds.

We have, in analogy with \eqref{ja1},
\begin{equation}
  \label{ja5}
\begin{split}
n^m\xix 
&=
\sum_{\q a_1,\dots,\q a_m\in\bbzn}
\ff H(\q a_1,\dots,\q a_m)
\sum_{z\in \bbznx} \go^{-z\sumjm x_ja_j}
\\
&=
\sum_{\q a_1,\dots,\q a_m\in\bbzn}
\ff H(\q a_1,\dots,\q a_m)
\ffznx \Bigpar{-\sumjm a_j\qx_j}.
\end{split}
\end{equation}
Let $\xHaa\=\prodim(1+a_i^2)\qw$, so that 
\eqref{j6} becomes
\begin{equation}\label{jb6}
|\ff H(\q a_1,\dots,\q a_m)|\le\xHaa.
\end{equation}
Note that $\sum_{\bbZ^m}\xHaa<\infty$ by \eqref{jb3},
so we may choose $C_1$ and $\eta>0$ such that
\begin{equation}\label{jb1}
\sum_{\bbZ^m\setminus[-C_1,C_1]^m}\xHaa<\frac12\parfrac{\gd}{2}^{2m}  
\end{equation}
and
\begin{equation}\label{jb2}
\eta\sum_{\bbZ^m}\xHaa<\frac12\parfrac{\gd}2^{2m}.
\end{equation}

Suppose first that
\begin{equation}
  \label{ja4}
\biggabs{\ffznx \Bigpar{-\sumjm a_j \qx_j}}>\eta\phi(n)
\end{equation}
for some $(a_1,\dots,a_m)\in[-C_1,C_1]^m\setminus\set{(0,\dots,0)}$.
Then, by \refL{LJ4}, for a certain integer $P$ depending only on
$\eta$, we have $\sumim Pa_ix_i\equiv0\pmod n$, so the conclusion (i) of \refL{LJ1strong} holds with
$a_i$ replaced by $a_i'\=Pa_i$ and $C\=PC_1$.

On the other hand, if \eqref{ja4} fails for all
$(a_1,\dots,a_m)\in[-C_1,C_1]^m\setminus\set{(0,\dots,0)}$,
then from \eqref{ja5}, \eqref{jb6}, \eqref{j7},
\eqref{jb1} and \eqref{jb2} we have
\begin{equation*}
\begin{split}
n^m\xix 
&\ge
\ff H(0,\dots,0)\phi(n)
-\sum_{\bbZ^m\setminus\set{(0,\dots,0)}}
\xHaa\biggabs{
\ffznx \Bigpar{-\sumjm a_j\qx_j}}
\\
&\ge \parfrac{\gd}{2}^{2m}\phi(n)
-\sum_{[-C_1,C_1]^m\setminus\set{(0,\dots,0)}} 
  \xHaa\eta\phi(n)
\\
&\hskip8em
-\sum_{\bbZ^m\setminus[-C_1,C_1]^m} \xHaa\phi(n)
\\
&>
\parfrac{\gd}{2}^{2m}\phi(n)\Bigpar{1-\frac12-\frac12}=0.
\end{split}
\end{equation*}
Thus $\xix\neq0$, and thus, as shown above, the conclusion
(ii) of \refL{LJ1strong} holds.
\end{proof}

It remains to prove our estimate \refL{LJ4} of the Fourier transform
$\ffznx$ of $\cznx=\chi_{\bbznx}$. First, we state (and
this time immediately prove) a final lemma.

\begin{lemma}
  \label{LJ3}
  Let $n=\prodir \pie$ be the prime factorization of $n\ge 2$. Then
  for any $u\in\bbZ$ we have $\ffznx(\q u)=\prodir\psi_{\pie}(u)$, where
  \begin{equation*}
	\psi_{p^e}(u)\=
	\begin{cases}
0, & p^{e-1}\ndela u;
\\
-p^{e-1},& p^{e-1}\dela u,\;p^e\ndela u;
\\
p^e-p^{e-1},&p^e\dela u.	  
	\end{cases}
  \end{equation*}
\end{lemma}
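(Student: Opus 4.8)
The plan is to recognise
\[
\ffznx(\q u)=\sum_{x\in\bbznx}\go^{ux}
\]
(the Ramanujan sum $c_n(u)$, with $\go=\exp(2\pi\ii/n)$) and to establish the product formula in two steps: first that $n\mapsto\ffznx(\q u)$ is multiplicative over coprime factorisations of $n$, and then a direct evaluation when $n$ is a prime power.

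For the multiplicative step I would write $n=n_1n_2$ with $(n_1,n_2)=1$ and fix $\bar n_1,\bar n_2\in\bbZ$ with $n_2\bar n_2\equiv1\pmod{n_1}$ and $n_1\bar n_1\equiv1\pmod{n_2}$. By the Chinese Remainder Theorem, $x\mapsto(x\bmod n_1,\,x\bmod n_2)$ is a bijection from $\bbznx$ onto $\bbZ_{n_1}^*\times\bbZ_{n_2}^*$, with inverse $x\equiv x_1n_2\bar n_2+x_2n_1\bar n_1\pmod n$. Since $n_2\bar n_2/n=\bar n_2/n_1$ and $n_1\bar n_1/n=\bar n_1/n_2$, this gives
\[
\go^{ux}=e^{2\pi\ii u x_1\bar n_2/n_1}\cdot e^{2\pi\ii u x_2\bar n_1/n_2},
\]
a well-defined function of $(x_1,x_2)$. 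As $x_1$ ranges over the units mod $n_1$ so does $x_1\bar n_2$, and likewise for $x_2$; summing over $x$, separating the double sum, and re-indexing each coordinate therefore factorises the sum as $\widehat{\chi^*_{n_1}}(\q u)\cdot\widehat{\chi^*_{n_2}}(\q u)$, where $u$ is now read modulo $n_1$, resp.\ $n_2$. Iterating over the prime factorisation of $n$ reduces the lemma to the case $n=p^e$; note that $\psi_{p^e}(u)$ depends only on $\min(v_p(u),e)$, so it is irrelevant whether $u$ is viewed in $\bbZ$ or in $\bbZ_{p^e}$.

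For $n=p^e$ I would write $\sum_{p\nmid x}=\sum_{\text{all }x}-\sum_{p\mid x}$. The first sum, $\sum_{x=0}^{p^e-1}\go^{ux}$, equals $p^e$ if $p^e\mid u$ and $0$ otherwise; in the second, the substitution $x=py$ turns it into $\sum_{y=0}^{p^{e-1}-1}(e^{2\pi\ii/p^{e-1}})^{uy}$, which equals $p^{e-1}$ if $p^{e-1}\mid u$ and $0$ otherwise. Subtracting, $\widehat{\chi^*_{p^e}}(\q u)$ equals $p^e-p^{e-1}$ when $p^e\mid u$, equals $-p^{e-1}$ when $p^{e-1}\mid u$ but $p^e\nmid u$, and equals $0$ when $p^{e-1}\nmid u$ — precisely $\psi_{p^e}(u)$. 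Combined with the multiplicativity this yields $\ffznx(\q u)=\prodir\psi_{\pie}(u)$. The computation is essentially bookkeeping; the one point that needs care is checking that the additive character $x\mapsto\go^{ux}$ genuinely factors through the CRT isomorphism and that the per-coordinate reindexing is a bijection of the unit groups — this is exactly what makes the Ramanujan sum multiplicative, and once it is in place the prime-power evaluation and the final product over primes are immediate.
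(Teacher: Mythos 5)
Your proof is correct and is essentially the paper's argument: both factor the sum over $\bbznx$ via the Chinese Remainder Theorem into a product over prime powers and then evaluate the prime-power case by writing $\sum_{p\nmid x}=\sum_{\text{all }x}-\sum_{p\mid x}$. The only (cosmetic) difference is that the paper parametrizes the CRT bijection by $f(y_1,\dots,y_r)=\sum_i y_i n/p_i^{e_i}$, which makes the character factor exactly and avoids your extra re-indexing by the units $\bar n_1,\bar n_2$.
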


\begin{proof}
Set $f(y_1,\dots,y_r)\=\sumir y_in/\pie$, and define 
$\q f:\prodir\bbZ_{\pie} \to\bbzn$ by $\q f(\q y_1,\dots,\q
y_r)\=\qa{f(y_1,\dots,y_r)}$.
Note that $\q f$ is well defined. Furthermore,
\begin{equation}\label{j9}
f(y_1,\dots,y_r)\equiv y_in/\pie \mod\pie,\qquad i=1,\dots,r,  
\end{equation}
which
implies that $\q f$ is injective and thus a bijection. (In fact,
$\q f$ is the map whose existence is guaranteed by the Chinese Remainder Theorem.)
Moreover, by \eqref{j9}, $p_i\dela f(y_1,\dots,y_r)\iff p_i\dela y_i$,
and thus 
$(f(y_1,\dots,y_r),n)=1\iff(y_i,\pie)=1$ for every $i$; consequently,
$\q f$ is also a bijection $\prodir\bbZ_{\pie}^*\to\bbznx$.
Hence,
\begin{align*}
  \ffznx(\q u)
&=
\sum_{\q x\in\bbznx} \ex{2\pi\ii ux/n}
\\
&=
\sum_{\q y_i\in\bbZ_{\pie}^*,\,i=1,\dots,r} 
\ex{2\pi \ii uf(y_1,\dots,y_r)/n}
\\
&=
\sum_{\q y_i\in\bbZ_{\pie}^*,\,i=1,\dots,r} 
\prodir \ex{2\pi \ii u y_i/\pie}
\\
&=
\prodir 
\sum_{\q y\in\bbZ_{\pie}^*} 
\ex{2\pi \ii u y/\pie}
.
\end{align*}
For any prime $p$ and any $e\ge1$, $\bbZ_{p^e}\xxx$ can be represented
  by $\set{y:1\le y\le p^e}\setminus\set{pz:1\le z\le p^{e-1}}$, so
  \begin{equation*}
\sum_{\q y\in\bbZ_{p^e}\xxx}\ex{2\pi\ii uy/p^e}
=
\sum_{y=1}^{p^e} \ex{2\pi\ii u y/p^e}	
-
\sum_{z=1}^{p^{e-1}} \ex{2\pi\ii u z/p^{e-1}}	
=\psi_{p^e}(u),
  \end{equation*}
since
\begin{equation*}
\sum_{y=1}^{p^k} \ex{2\pi\ii u y/p^k}	
=
\begin{cases}
  p^k, & p^k\dela u,
\\
  0, & p^k\ndela u,
\end{cases}
\end{equation*}
for any $k\ge 0$.
\end{proof}

\refL{LJ4} now follows easily.
\begin{proof}[Proof of \refL{LJ4}.]
  Using the notation of \refL{LJ3}, we see that if $\pie\ndela u$ for
  some prime factor $p_i\notin\cP$ of $n$, then,
using $\phi(n)=\ffznx(0)=\prodir(\pie-p_i^{e_i-1})$, we have
  \begin{equation}
	\label{ja2}
\frac{\lrabs{\ffznx(\q u)}}{\phi(n)}
\le
\frac{|\psi_{\pie}(u)|}{\pie-p_i^{e_i-1}}
\le\frac1{p_i-1}<\eta.
  \end{equation}
Assume $\lrabs{\ffznx(\q u)}\ge\eta\phi(n)$. Then, by \eqref{ja2},
$\pie\mid u$ for every $p_i\notin\cP$. Furthermore, for every $p_i\in\cP$ we have
$p_i^{e_i-1}\dela u$, since otherwise $\ffznx(\q u)=0$ by
\refL{LJ3}. Hence $n\dela Pu$. 
\end{proof}

As noted above, \refL{LJ4} was the only missing piece of the puzzle; the proof of~\refL{LJ1strong}
is now complete, and hence that of \refT{TJ1}. As shown in \refS{SZsmall},
\refT{TJ1Z} and Corollaries~\ref{CJ1} and~\ref{CJ2} follow.

\newcommand\AAP{\emph{Adv. Appl. Probab.} }
\newcommand\JAP{\emph{J. Appl. Probab.} }
\newcommand\JAMS{\emph{J. \AMS} }
\newcommand\MAMS{\emph{Memoirs \AMS} }
\newcommand\PAMS{\emph{Proc. \AMS} }
\newcommand\TAMS{\emph{Trans. \AMS} }
\newcommand\AnnMS{\emph{Ann. Math. Statist.} }
\newcommand\AnnPr{\emph{Ann. Probab.} }
\newcommand\CPC{\emph{Combin. Probab. Comput.} }
\newcommand\JMAA{\emph{J. Math. Anal. Appl.} }
\newcommand\RSA{\emph{Random Struct. Alg.} }
\newcommand\ZW{\emph{Z. Wahrsch. Verw. Gebiete} }
\newcommand\DMTCS{\jour{Discr. Math. Theor. Comput. Sci.} }

\newcommand\AMS{Amer. Math. Soc.}
\newcommand\Springer{Springer-Verlag}
\newcommand\Wiley{Wiley}

\newcommand\vol{\textbf}
\newcommand\jour{\emph}
\newcommand\book{\emph}
\newcommand\inbook{\emph}
\def\no#1#2,{\unskip#2, no. #1,} 
\newcommand\toappear{\unskip, to appear}

\newcommand\webcite[1]{\hfil
   \penalty0\texttt{\def~{{\tiny$\sim$}}#1}\hfill\hfill}
\newcommand\webcitesvante{\webcite{http://www.math.uu.se/~svante/papers/}}
\newcommand\arxiv[1]{\webcite{http://arxiv.org/abs/#1}}

\def\nobibitem#1\par{}

\end{document}